\newcommand{\R}{\mathbb{R}}
\newcommand{\Zint}{\mathbb{Z}}
\newcommand{\veq}{\mathrel{\reflectbox{\rotatebox[origin=c]{90}{$=$}}}}
\newcommand{\vsimeq}{\mathrel{\reflectbox{\rotatebox[origin=c]{90}{$\simeq$}}}}
\newcommand{\desc}{\textrm{desc}}
\newcommand{\odesc}{\overline{\textrm{desc}}}
\newcommand{\X}{\mathbb{X}}
\newcommand{\Y}{\mathbb{Y}}
\newcommand{\alp}{\textrm{Alp}}
\newcommand{\del}{\textrm{Del}}
\newcommand{\bsigma}{\bar{\sigma}}
\newtheorem{theorem}{Theorem}
\newtheorem{definition}[theorem]{Definition}
\newtheorem{cond}[theorem]{Condition}
\newtheorem{prop}[theorem]{Proposition}
\newtheorem{fact}[theorem]{Fact}
\newtheorem{lem}[theorem]{Lemma}
\newenvironment{claim}[1]{\par\noindent\underline{Claim:}\space#1}{}
\title[Volume Optimal Cycle]{Volume Optimal Cycle: Tightest representative cycle of 
a generator on persistent homology}
\author{Ippei Obayashi}
\begin{document}
\maketitle

\begin{abstract}
This paper shows a mathematical formalization, algorithms and computation software of volume optimal cycles, which are useful to understand geometric features
shown in a persistence diagram.
Volume optimal cycles give us concrete and optimal homologous structures,
such as rings or cavities, on a given data.
The key idea is the optimality on $(q + 1)$-chain complex for a $q$th homology generator. This optimality formalization is suitable for persistent homology. We can solve the optimization problem using linear programming.
For an alpha filtration on $\R^n$, volume optimal cycles on an $(n-1)$-th persistence diagram is more efficiently computable
using merge-tree algorithm.
The merge-tree algorithm also gives us a tree structure on the diagram and the structure has richer information. The key mathematical idea is Alexander duality.
\end{abstract}

\section{Introduction}

Topological Data Analysis (TDA)~\cite{carlsson,eh}, which clarifies the geometric features
of data from the viewpoint of topology, is developed rapidly in this century
both in theory and application. In TDA, persistent homology and its persistence
diagram (PD) \cite{elz,zc} are
important tool for TDA. Persistent homology enables us to capture
multiscale topological features effectively and quantitatively.
Fast computation softwares for persistent homology
are developed \cite{dipha,phat} and many applications are achieved such as
materials science \cite{Hiraoka28062016,granular,PhysRevE.95.012504},
sensor networks \cite{sensor}, evolutions of virus~\cite{virus}, and so on.
From the viewpoint of data analysis, a PD has some significant properties:
translation and rotation invariance, multiscalability and robustness to noise.
PDs are considered to be compact descriptors for complicated geometric data.

$q$th homology $H_q$ encodes $q$ dimensional geometric structures of data
such as connected components ($q=0$), rings ($q=1$), cavities ($q=2$), etc.
$q$th persistent homology encodes the information
about $q$ dimensional geometric structures with their scale.
A PD, a multiset\footnote{A multiset is a set with multiplicity on each point.}
in $\R\times(\R \cup \{\infty\})$, is used to summarize the information.
Each point in a PD is called a birth-death pair, which represents a homologous
structure in the data, and the scale is encoded on x-axis and y-axis.

\begin{figure}[hbtp]
  \centering
  \includegraphics[width=0.5\hsize]{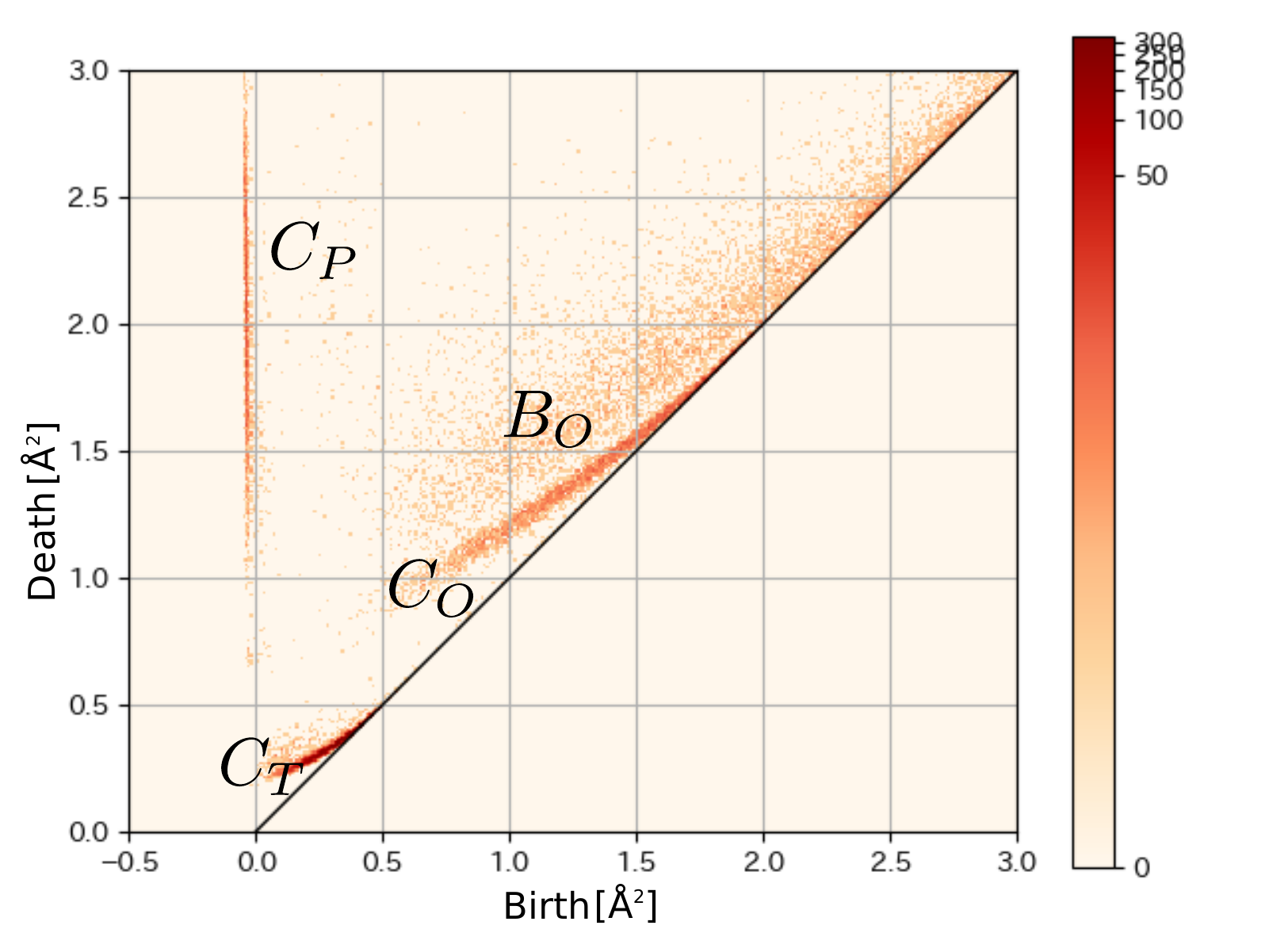}
  \caption{The 1st PD for the atomic configuration of amorphous silica
    in \cite{Hiraoka28062016},
    reproduced from the simulation data. The data is provided by Dr. Nakamura.
  }
  \label{fig:amorphous-silica}
\end{figure}

Typical workflow of the data analysis with persistent homology is as follows:
\begin{enumerate}
\item Construct a filtration from data
  \begin{itemize}
  \item Typical input data is a point cloud, a finite set of points in $\R^n$ and
    a typical filtration is an alpha filtration
  \end{itemize}
\item Compute the PD from the filtration
\item Analyze the PD to investigate the geometric features of the data
\end{enumerate}
In the last part of the above workflow, we often want to inversely reconstruct
a geometric structure corresponding each birth-death pair on the PD,
such as a ring or a cavity,
into the original input data.
Such inverse analysis is practically important for the
use of PDs. For example, we consider the 1st PD shown in Fig.~\ref{fig:amorphous-silica}
from the atomic configuration of amorphous silica
computed by molecular dynamical simulation
\cite{Hiraoka28062016}.
In this PD, there are some characteristic bands $C_P, C_T, C_O, B_O$, and these 
bands correspond to typical geometric structures in amorphous silica.
To analyze the PD more deeply, we want to reconstruct rings corresponding
such birth-death pairs in the original data. In the paper,
optimal cycles, one of such inverse analysis methods, are effectively used
to clarify such typical structures.

\begin{figure}[htbp]
  \centering
  \includegraphics[width=\hsize]{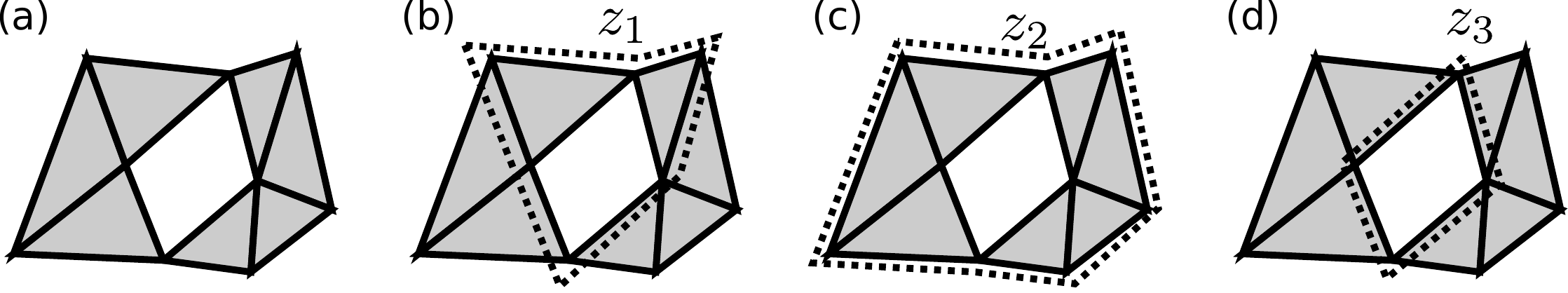}
  \caption{A simplicial complex with one hole.}
  \label{fig:optcyc_one_hole}
\end{figure}
A representative cycle of a generator of the homology vector space has such information,
but it is not unique and we want to find a better cycle
to understand the homology generator
for the analysis of a PD. 
For example, Fig.~\ref{fig:optcyc_one_hole}(a) has one homology generator on
$H_1$, and cycles $z_1$, $z_2$, and $z_3$ shown in Fig.~\ref{fig:optcyc_one_hole}
(b), (c), and (d)
are the same homologous information.
However, we consider that $z_3$ is best to understand the homology.
Optimization problems on homology are used to find such a representative cycle.
We can find the ``tightest'' representative cycle under a certain formalization.
Such optimization problems have been widely studied
under various settings\cite{optimal-Day,Erickson2005,Chen2011}, and
two concepts, optimal cycles\cite{Escolar2016} and
volume optimal cycles\cite{voc}, have been successfully
applied to persistent homology.
The optimal cycle minimizes the size of the cycle, while the volume optimal
cycle minimizes the internal volume of the cycle. Both of these two methods
give a tightest cycle in different senses.
The volume optimal cycles for persistent homology have been proposed in
\cite{voc} under the restriction of dimension. We can use them only for
$(n-1)$-th persistent homology embedded in $\R^n$, but
under the restriction,
there is an efficient computation algorithm using Alexander duality.

In this paper, we generalize the concept of volume optimal cycles on
any persistent homology and show the computation algorithm.
The idea in \cite{voc} is not applied to find a volume optimal ring
(a volume optimal cycle for $q=1$)
in a point cloud in $\R^3$ but our method is applicable to such a case.
In that case, optimal cycles are also applicable, but our new algorithm is
simpler, faster for large data, and gives us better information.

The contributions of this paper are as follows:
\begin{itemize}
\item The concept of volume optimal cycles is proposed to identify
  good representatives of generators in persistent homology.
  This is useful to understand a persistence diagram.
  \begin{itemize}
  \item The concept has been already proposed in \cite{voc} in a strongly limited sense
    about dimension and this paper
    generalize it.
  \item Optimal cycles are also usable for the same purpose, but the algorithm
    in this paper is easier to implement, faster, and gives
    better information.
    \begin{itemize}
    \item Especially, children birth-death pairs shown in Section \ref{sec:compare}
      are available only with volume optimal cycles.
    \end{itemize}
  \end{itemize}
\item Mathematical properties of volume optimal cycles are clarified.
\item Effective computation algorithms for volume optimal cycles are proposed.
\item The algorithm is implemented and some examples are computed by
  the program to show the usefulness of volume optimal cycles.
\end{itemize}

The rest of this paper is organized as follows. The fundamental ideas such as
persistent homology and simplicial complexes are introduced in Section~\ref{sec:ph}.
In Section~\ref{sec:oc} the idea of optimal cycles is reviewed.
Section~\ref{sec:voc} is the main part
of the paper. The idea of volume optimal cycles and the computation algorithm
in a general setting are presented in Section~\ref{sec:voc}.
Some mathematical properties of volume optimal cycles are also shown in this section.
In Section~\ref{sec:vochd} we show some special properties of 
$(n-1)$-th persistent homology in $\R^n$ and the faster algorithm.
We also explain tree structures in $(n-1)$-th persistent homology.
In Section~\ref{sec:compare}, we compare volume optimal cycles and optimal cycles.
In Section~\ref{sec:example} we show some computational examples by the proposed
algorithms. In Section~\ref{sec:conclusion}, we conclude the paper.

\section{Persistent homology}\label{sec:ph}

In this section, we explain some preliminaries about persistent homology
and geometric models. Persistent homology is available on
various general settings, but we mainly focus on the persistent homology
on a filtration of simplicial complexes, especially an alpha filtration
given by a point cloud.

\subsection{Persistent homology}

Let $\X = \{X_t \mid t \in T\}$ be a \textit{filtration} of topological spaces
where $T$ is a subset of $\Zint$ or $\R$.
That is, $X_t \subset X_{t'}$ holds for every $t \leq t'$.
Then we define $q$th homology
vector spaces $\{H_q(X_t)\}_{t \in T}$ whose coefficient is a field $\Bbbk$ and
homology maps $\varphi_s^t : H_q(X_s) \to H_q(X_t)$ for all $s \leq t$ induced by
inclusion maps $X_s \xhookrightarrow{} X_t$.
The family $H_q(\X) = (\{H_q(X_t)\}_t, \{ \varphi_s^t\}_{s \leq t})$
is called the $q$th \textit{persistent homology}.
The theory of persistent homology enables us to analyze the
structure of this family.

Under some assumptions, 
$H_q(\X)$ is uniquely decomposed as follows~\cite{elz,zc}:
\begin{align*}
  H_q(\X) = \bigoplus_{i=1}^p I(b_i, d_i),
\end{align*}
where $b_i \in T, d_i \in T \cup \{\infty\}$ with $b_i < d_i$.
Here, $I(b, d) = (U_t, f_s^t)$
consists of a family of vector spaces and linear maps:
\begin{align*}
  U_t&=\left\{
  \begin{array}{ll}
    \Bbbk, &\mbox{if } b \leq t < d, \\
    0, & \mbox{otherwise},
  \end{array}
         \right. \\
  f_s^t&:U_s \to U_t \\
  f_s^t&=\left\{
  \begin{array}{ll}
    \textrm{id}_\Bbbk, &\mbox{if } b \leq s \leq t < d, \\
    0, & \mbox{otherwise}.
  \end{array}
         \right. 
\end{align*}
This means that for each $I(b_i, d_i)$ there is 
a $q$ dimensional hole in $\X$ and it appears at $t = b_i$, persists up to $t < d_i$ and
disappears at $t = d_i$. In the case of $d_i = \infty$,
the $q$ dimensional hole never disappears on $\X$.
$b_i$ is called a \textit{birth time}, $d_i$ is called a
\textit{death time}, and the pair $(b_i, d_i)$ is called a \textit{birth-death pair}.
When $\X$ is a filtration of finite simplicial/cell/cubical complexes on $T$
with $\#T < \infty$ (we call $\X$ a \textit{finite filtration} under the condition),
such a unique decomposition exists.

When we have the unique decomposition,
the $q$th \textit{persistence diagram} of $\X$, $D_q(\X)$, is defined by a multiset
\begin{align*}
  D_q(\X) = \{(b_i, d_i) \mid i=1,\ldots, p\},
\end{align*}
and the 2D scatter plot or the 2D histogram of $D_q(\X)$ is often used to visualize
the diagram.

We investigate the detailed
algebraic structure of persistent homology for the preparation.
For simplicity, we assume the following condition on $\X$.
\begin{cond}\label{cond:ph}
  Let $X = \{\sigma_1, \ldots, \sigma_K\}$ be a finite simplicial complex.
  For any $1 \leq k \leq K$, $X_k = \{\sigma_1, \ldots, \sigma_k\}$ is
  a subcomplex of $X$.
\end{cond}
Under the condition,
\begin{align}
  \X: \emptyset = X_0 \subset X_1 \subset \cdots \subset X_K = X,\label{eq:ph}
\end{align}
is a filtration of complexes. For a general finite filtration, we can construct
a filtration satisfying Condition~\ref{cond:ph} by ordering all simplices properly.
Let $\partial_q: C_q(X) \to C_{q-1}(X)$ be the
boundary operator on $C_q(X)$ and 
$\partial_q^{(k)}: C_q(X_k) \to C_{q-1}(X_k)$ be a boundary operator of $C_q(X_k)$.
Cycles $Z_q(X_k)$ and boundaries $B_q(X_k)$ are defined by the
kernel of $\partial_q^{(k)}$ and the image of $\partial_{q+1}^{(k)}$, and $q$th homology
vector spaces are defined by $H_q(X_k) = Z_q(X_k)/B_q(X_k)$. Condition~\ref{cond:ph}
says that if $\sigma_k $ is a $q$-simplex,
\begin{equation}
  \label{eq:chain_plus1}
  \begin{aligned}
    C_q(X_{k}) & = C_q(X_{k-1})\oplus\left<\sigma_k\right>, \\
    C_{q'}(X_{k}) & = C_{q'}(X_{k-1}), \mbox{ for $q' \not = q$},
  \end{aligned}
\end{equation}
holds.
From the decomposition theorem and \eqref{eq:chain_plus1},
for each birth-death pair $(b_i, d_i)$,
we can find $z_i \in C_q(X)$ such that
\begin{align}
  &z_i \not \in Z_q(X_{b_i-1}), \label{eq:birth_pre}\\
  &z_i  \in Z_q(X_{b_i}) = Z_q(X_{b_i-1}) \oplus \left<\sigma_{b_i}\right>, \label{eq:birth_post}\\
  &z_i  \not \in B_q(X_{k}) \mbox{ for $k < d_i$}, \label{eq:death_pre} \\
  &z_i  \in B_q(X_{d_i}) = B_q(X_{d_i-1}) \oplus \left<\partial \sigma_{d_i}\right>, \label{eq:death_post} \\
  &\{[z_i]_k  \mid b_i \leq k < d_i\} \text{ is a basis of } H_q(X_k), \label{eq:ph-basis}
\end{align}
where $[z]_k = [z]_{B_q(X_k)} \in H_q(X_k)$.
\eqref{eq:death_post} holds only if $d_i \not = \infty$. This $[z_i]_k$ is a
homology generator that persists from $k={b_i}$ to $k = {d_i-1}$.
$\{z_i\}_{i=1}^p$ is called the \textit{persistence cycles} for
$D_p(\X) = \{(b_i, d_i)\}_{i=1}^p$.
An algorithm of computing a PD actually finds persistence cycles from a given
filtration.
The persistence cycle of $(b_i, d_i)$ is not unique, therefore
we want to find a ``good'' persistence cycle 
to find out the geometric structure corresponding to each birth-death pair.
That is the purpose of 
the volume optimal cycle, which is the main topic of this paper.
We remark that the condition \eqref{eq:ph-basis} can be easily proved from
(\ref{eq:birth_pre}-\ref{eq:death_post}) and the decomposition theorem,
and
hence we only need to show (\ref{eq:birth_pre}-\ref{eq:death_post}) to prove
that given $\{z_i\}_{i=1}^p$ are persistence cycles.

\subsection{Alpha filtration}

One of the most used filtrations for data analysis using persistent homology
is an alpha filtration~\cite{eh, em}. An alpha filtration is defined from a point cloud,
a set of finite points $P = \{x_i \in \R^n\}$.
The alpha filtration is defined as a filtration of alpha complexes and they are
defined by a Voronoi diagram and a Delaunnay triangulation. 

The \textit{Voronoi diagram} for a point cloud $P$, which is a decomposition of $\R^n$ into
\textit{Voronoi cells} $\{V(x_i) \mid x_i \in P\}$, is defined by
\begin{align*}
  V(x_i) = \{x \in \R^n \mid \|x - x_i\|_2 \leq \|x - x_j\|_2 \text{ for any } j\not = i\}.
\end{align*}
The \textit{Delaunnay triangulation} of $P$, $\del(P)$, which is a simplicial complex
whose vertices are points in $P$, is defined by
\begin{align*}
  \del(P) = \{[x_{i_1} \cdots x_{i_q}] \mid
  V(x_{i_1}) \cap \cdots \cap V(x_{i_q}) \not = \emptyset\},
\end{align*}
where $[x_{i_0} \cdots x_{i_q}]$ is the $q$-simplex whose vertices are
$x_{i_0}, \ldots, x_{i_q} \in P$.
Under the assumption of general position in the sense of \cite{em},
the Delaunnay triangulation is a simplicial decomposition of
the convex hull of $P$ and it has good geometric properties.
The \textit{alpha complex} $\alp(P, r)$ with radius parameter $r \geq 0$,
which is a subcomplex of $\del(P)$, is defined as follows:
\begin{align*}
  \alp(P, r) = \{[x_{i_0} \cdots x_{i_q}] \in \del(P) \mid
  B_r(x_{i_0}) \cap \cdots \cap B_r(x_{i_q}) \not = \emptyset \},
\end{align*}
where $B_r(x)$ is the closed ball whose center is $x$ and whose radius is $r$.
A significant property of the alpha complex is the following homotopy equivalence
to the $r$-ball model.
\begin{align*}
  \bigcup_{x_i \in P} B_r(x_i) \simeq |\alp(P, r)|,
\end{align*}
where $|\alp(P,r)|$ is the geometric realization of $\alp(P, r)$.
The \emph{alpha filtration} for $P$ is defined by $\{\alp(P,r)\}_{r\geq 0}$.
Figure~\ref{fig:alpha} illustrates an example of a filtration by $r$-ball model
and the corresponding alpha filtration. The 1st PD of this filtration is
$\{(r_2, r_5), (r_3, r_4)\}$.
Since there are $r_1 < \cdots < r_K$ such
that $\alp(P, s) = \alp(P, t)$ for any $r_i \leq s < t < r_{i+1}$, we can
treat the alpha filtration as a finite filtration
$\alp(P, r_1) \subset \cdots \subset \alp(P, r_K)$.

\begin{figure}[htbp]
  \centering
  \includegraphics[width=0.8\hsize]{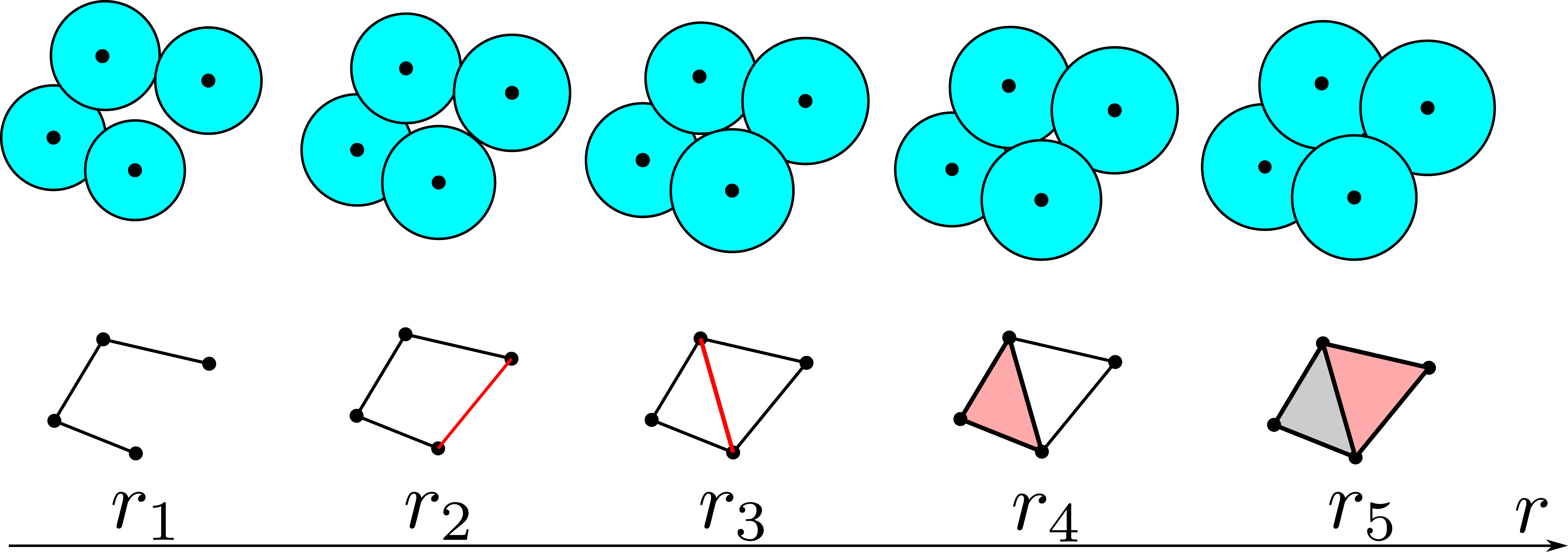}
  \caption{An $r$-ball model and the corresponding alpha filtration. Each red simplex
    in this figure appears at the radius parameter $r_i$.
  }
  \label{fig:alpha}
\end{figure}

We mention an weighted alpha complex and its filtration~\cite{weightedalpha}.
An alpha complex is topologically equivalent to the union of $r$-balls,
while an weighted alpha complex is topologically equivalent to the union
of $\sqrt{r^2+\alpha_i}$-balls, where $\alpha_i$ depends on each point.
The weighted alpha filtration is useful to study the geometric structure
of a point cloud whose points have their own radii. For example,
for the analysis of atomic configuration,
the square of ionic radii or Van der Waals radii are used as $\alpha_i$. 

\section{Optimal cycle}\label{sec:oc}

First, we discuss an optimal cycle on normal homology whose
coefficient is $\Bbbk = \Zint_2$.
Figure~\ref{fig:optcyc_one_hole}(a) shows a simplicial complex whose
1st homology vector space $H_1$ is isomorphic to $\Zint_2$.
In Fig.~\ref{fig:optcyc_one_hole}(b), (c), and (d),
$z_1$, $z_2$, and $z_3$ have same information about $H_1$. That is,
$H_1 = \left<[z_1]\right> = \left<[z_2]\right> = \left<[z_3]\right>$. However,
we intuitively consider that $z_3$ is the best to represent the hole in
Fig.~\ref{fig:optcyc_one_hole} since $z_3$ is the shortest loop in these
loops. 
Since the size of a loop $z = \sum_{\sigma:1-\text{simplex}} \alpha_\sigma\sigma \in Z_1(X)$
is equal to
\begin{align*}
  \#\{\sigma : 1\text{-simplex} \mid \alpha_\sigma \not = 0 \},
\end{align*}
and this is $\ell^0$ ``norm''\footnote{
  For a finite dimensional $\R$- or $\mathbb{C}$- vector space
  whose basis is $\{g_i\}_i$,
  the $\ell^0$ norm $\|\cdot\|_0$ is defined by
  $\|\sum_i \alpha_i g_i \|_0 = \# \{i \mid \alpha_i \not = 0 \}$.
  Mathematically this is not a norm since it is not homogeneous, but
  in information science and statistics, it is called $\ell^0$ norm.
}\footnote{
  On a $\Zint_2$-vector space, any norm is not defined mathematically, but
  it is natural that we call this $\ell^0$ norm.
},
we write it $\|z\|_0$.
Here, $z_3$ is the solution of the following problem:
\begin{align*}
  \mbox{minimize } \|z\|_0 ,\mbox{ subject to } z\sim z_1.
\end{align*}
The minimizing $z$ is called the \textit{optimal cycle} for $z_1$.
From the definition of homology, we can rewrite the problem as follows:
\begin{equation}
  \label{eq:optcyc_one_hole}
  \begin{aligned}
    \mbox{minimize } &\|z\|_0, \mbox{ subject to:} \\
    z &= z_1 + \partial w, \\
    w &\in C_2(X).
  \end{aligned}
\end{equation}
Now we complete the formalization of the optimal cycle
on a simplicial complex with one hole.

\begin{figure}[tbp]
  \centering
  \includegraphics[width=0.4\hsize]{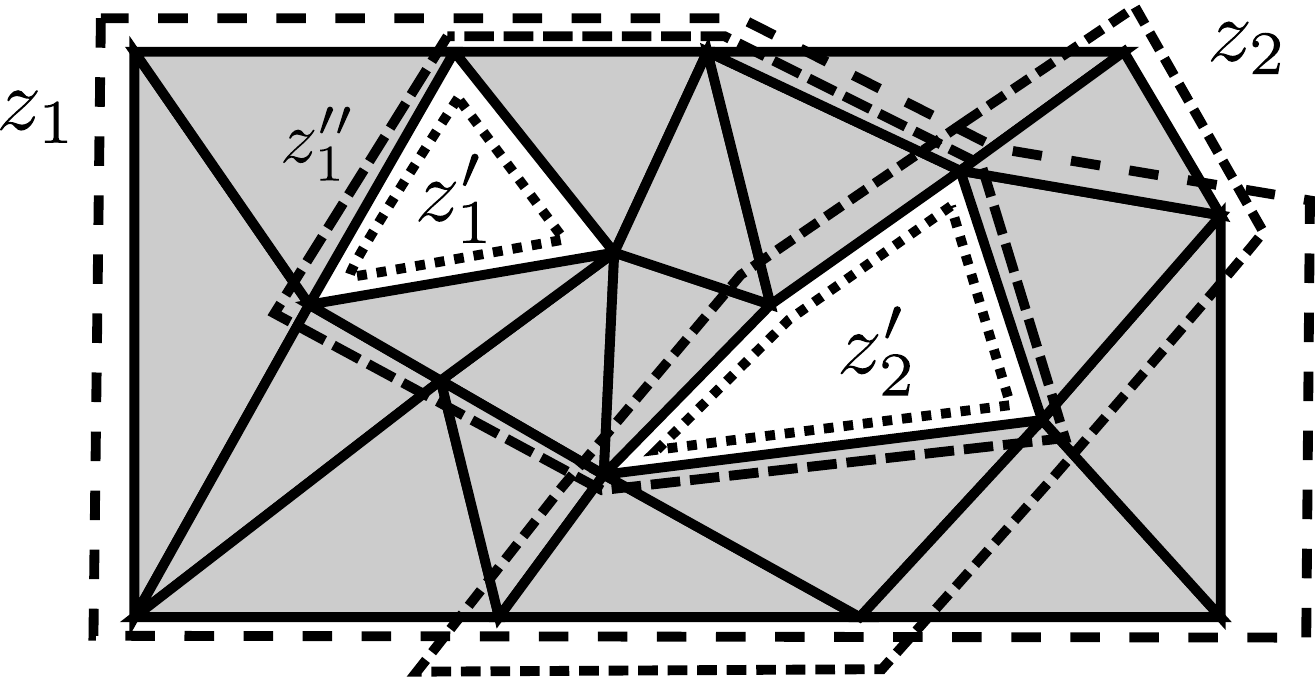}
  \caption{A simplicial complex with two holes.}
  \label{fig:optcyc_two_hole}
\end{figure}

How about the case if a complex has two or more holes?
We consider the example in Fig.~\ref{fig:optcyc_two_hole}.
From $z_1$ and $z_2$, we try to find $z_1'$ and $z_2'$ using a similar
formalization. If we apply the optimization \eqref{eq:optcyc_one_hole}
to each $z_1$ and $z_2$, $z_1''$ and $z_2'$ are found. How can we
find $z_1'$ from $z_1$ and $z_2$?
The problem is a hole represented by $z_2'$, therefore we ``fill''
that hole and solve the minimization problem. Mathematically,
filling a hole corresponds to considering $Z_1(X)/(B_1(X) \oplus \left<z_2'\right>)$ instead
of $Z_1(X)/B_1(X)$ and 
the following optimization problem gives us the required loop $z_1'$.
\begin{align*}
  \mbox{minimize } &\|z\|_0, \mbox{ subject to:} \\
  z & = z_1 + \partial w + k z_2, \\
  w &\in C_2(X), \\
  k & \in \Zint_2.
\end{align*}

When you have a complex that has many holes, you can apply the idea
repeatedly to find all optimal cycles. The idea of optimal cycles
obviously applied $q$th homology for any $q$.

\subsection{How to compute an optimal cycle}\label{subsec:fast-computation}

Finding a basis of a homology vector space is not a difficult problem for a computer.
We prepare a matrix representation of the boundary operator and
apply matrix reduction algorithm. Please read \cite{comphom} for the detailed algorithm.
Therefore the problem is how to solve the above minimizing problem.

In general, solving a optimization problem on a $\Zint_2$ linear space is a
difficult problem. The problem is a kind of
combinatorial optimization problems. They are well studied but it is
well known that such a problem is sometimes hard to solve on a computer.

One approach is using linear programming, used in \cite{sensor-l0-l1}.
Since optimization problem on $\Zint_2$ is
hard, we use $\R$ as a coefficient. For $\R$ coefficient, $\ell^0$ norm also 
means the size of loop and $\ell^0$ optimization is natural for our purpose.
However, $\ell^0$ optimization is also a difficult problem. Therefore we replace
$\ell^0$ norm to $\ell^1$ norm. It is well known in the fields of sparse sensing and
machine learning that
$\ell^1$ optimization gives a good approximation of $\ell^0$
optimization.
That is, we solve the following optimization problem
instead of \eqref{eq:optcyc_one_hole}.
\begin{equation}
  \label{eq:optcyc_one_hole-l1}
  \begin{aligned}
    \mbox{minimize } &\|z\|_1,  \mbox{ subject to:} \\
    z &= z_1 + \partial w, \\
    w &\in C_2(X; \R).
  \end{aligned}
\end{equation}
This is called a linear programming and we can solve the problem very efficiently
by good solvers such as cplex\footnote{\url{https://www-01.ibm.com/software/commerce/optimization/cplex-optimizer/}} and Clp\footnote{\url{https://projects.coin-or.org/Clp}}.

Another approach is using integer programming, used in \cite{optimal-Day,Escolar2016}.
$\ell^1$ norm optimization gives a good approximation, but maybe the solution is not
exact. However, if all coefficients are restricted into $0$ or $\pm 1$
in the optimization problem \eqref{eq:optcyc_one_hole-l1},
the $\ell^0$ norm and $\ell^1$ norm is identical, and it gives a better solution.
This restriction on the coefficients has another advantage that
we can understand the optimal solution in more intuitive way.
Such an optimization problem is
called integer programming. Integer programming is much slower than linear programming,
but
some good solvers such as cplex and Clp are available for integer programming.


\subsection{Optimal cycle for a filtration}

Now, we explain optimal cycles on a filtration to
analyze persistent homology shown in \cite{Escolar2016}.
We start from the example Fig.~\ref{fig:optcyc_filtration}.

\begin{figure}[htbp]
  \centering
  \includegraphics[width=\hsize]{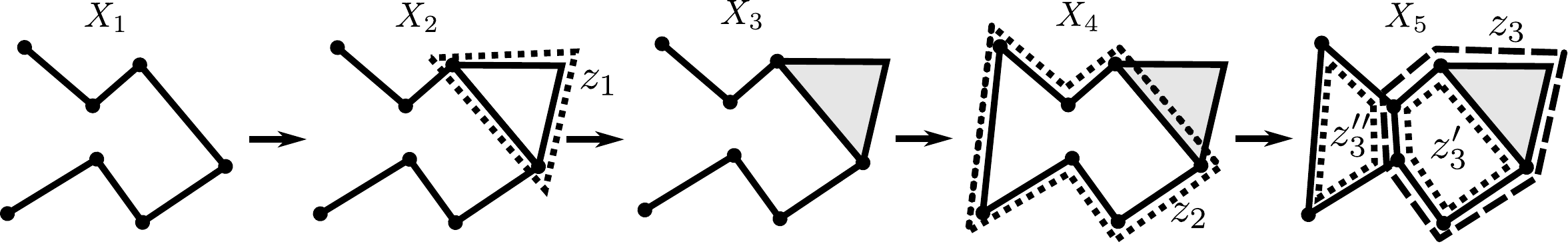}
  \caption{A filtration example for optimal cycles.}
  \label{fig:optcyc_filtration}
\end{figure}

In the filtration, a hole $[z_1]$ appears at $X_2$ and disappear at $X_3$,
another hole $[z_2]$ appears at $X_4$ and $[z_3]$ appears at $X_5$.
The 1st PD of the filtration is $\{(2,3), (4,\infty), (5, \infty)\}$.
The persistence cycles
$z_1, z_2, z_3$, are computable by the algorithm of persistent homology and
we want to find $z_3'$ or $z_3''$ to analyze the hole corresponding to
the birth-death pair $(5, \infty)$.
The hole $[z_1]$ has been already dead at $X_5$ and $[z_2]$ remains alive at $X_5$,
so we can find $z_3'$ or $z_3''$ to solve the following optimization problem:
\begin{align*}
  \mbox{minimize } & \|z\|_0 \mbox{ subject to: } \\
  z &= z_3 + \partial w + k z_2, \\
  w & \in C_1(X_5), \\
  k & \in \Bbbk.
\end{align*}
In this case, $z_3''$ is chosen because $\|z_3'\|_0 > \|z_3''\|_0$.
By generalizing the idea, we show 
Algorithm~\ref{alg:optcyc} to find optimal cycles for a filtration $\X$\footnote{
  In fact, in \cite{Escolar2016}, two slightly different algorithms are shown,
  and this algorithm is one of them.
}. Of course, to solve the optimization problem in Algorithm~\ref{alg:optcyc},
we can use the computation techniques shown in Section~\ref{subsec:fast-computation}.

\begin{algorithm}[ht]
  \caption{Computation of optimal cycles on a filtration}\label{alg:optcyc}
  \begin{algorithmic}
    \State Compute $D_q(\X)$ and
    persistence cycles $z_1, \ldots, z_n$
    \State Choose $(b_i, d_i) \in D_q(\X)$ by a user
    \State Solve the following optimization problem
    \begin{align*}
      \mbox{minimize } &\|z\|_1, \mbox{ subject to:} \\
      z &= z_i + \partial w + \sum_{j \in T_i} \alpha_j z_j, \\
      w & \in C_q(X_{b_i}), \\
      \alpha_j & \in \Bbbk, \\
      \text{where } T_i& = \{j \mid b_j < b_i < d_j\}.
    \end{align*}
  \end{algorithmic}
\end{algorithm}

\section{Volume optimal cycle}\label{sec:voc}

In this section, we propose volume optimal cycles, a new tool to
characterize generators appearing in persistent homology. 
In this section, we will show the generalized version of volume optimal cycles and
the computation algorithm.
The limited version of volume optimal cycles shown in \cite{voc} will be explained
in the next section.

We assume Condition~\ref{cond:ph} and consider the filtration
$\X: \emptyset = X_0 \subset \cdots \subset X_K = X$.
A \textit{persistent volume} for $(b_i, d_i) \in D_q(\X)$
is defined as follows.
\begin{definition}
  $z \in C_{q+1}(X)$ is a persistent volume for $(b_i, d_i) \in D_q(\X)$
  if $z$ satisfies the following conditions:
  \begin{align}
    z &= \sigma_{d_i} + \sum_{\sigma_k \in \mathcal{F}_{q+1}} \alpha_k \sigma_k,
        \label{eq:vc-1}\\
    \tau^*(\partial z) &= 0 \mbox{ for all } \tau \in \mathcal{F}_{q}, \label{eq:vc-2}\\
    \sigma_{b_i}^*(\partial z) &\not =  0, \label{eq:vc-3}
  \end{align}
  where $\mathcal{F}_{q} = \{ \sigma_k : q\textup{-simplex} \mid b_i < k < d_i \}$,
  $\{ \alpha_k \in \Bbbk \}_{\sigma_k \in \mathcal{F}_{q+1}}$, and $\sigma_k^*$ is the
  dual basis of cochain $C^q(X)$, i.e. $\sigma_k^*$ is the linear map on $C_q(X)$
  satisfying $\sigma_k^*(\sigma_j) = \delta_{kj}$ for any $\sigma_k, \sigma_j$: $q$-simplex.
\end{definition}
Note that the persistent volume is defined only if the death time is finite.

The \textit{volume optimal cycle} for $(b_i, d_i)$
and the \textit{optimal volume} for the pair are defined as follows.
\begin{definition}\label{defn:voc}
  $\partial \hat{z}$ is the volume optimal cycle and
  $\hat{z}$ is the optimal volume for $(b_i, d_i) \in D_q(\X)$
  if $\hat{z}$ is the solution
  of the following optimization problem.
  \begin{center}
    minimize $\|z\|_0$, subject to \eqref{eq:vc-1}, \eqref{eq:vc-2}, and \eqref{eq:vc-3}.
  \end{center}
\end{definition}

The following theorem ensures that the optimization problem
of the volume optimal cycle always has a solution.

\begin{theorem}\label{thm:existence_voc}
  There is always a persistent volume of any $(b_i, d_i) \in D_q(\X)$.
\end{theorem}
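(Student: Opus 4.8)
The plan is to produce a persistent volume explicitly from the persistence cycle $z_i$ associated to $(b_i,d_i)$, exploiting the defining properties \eqref{eq:birth_pre}--\eqref{eq:death_post}. Recall that $z_i \in Z_q(X)$ satisfies $z_i \in B_q(X_{d_i})$ but $z_i \notin B_q(X_{d_i-1})$; moreover by \eqref{eq:death_post} we have $B_q(X_{d_i}) = B_q(X_{d_i-1}) \oplus \langle \partial\sigma_{d_i}\rangle$, where $\sigma_{d_i}$ is the $(q+1)$-simplex added at time $d_i$. Hence there is some $w \in C_{q+1}(X_{d_i})$ with $\partial w = z_i$, and writing $w$ in the simplex basis, the coefficient of $\sigma_{d_i}$ in $w$ must be nonzero (otherwise $w \in C_{q+1}(X_{d_i-1})$ and $z_i \in B_q(X_{d_i-1})$, a contradiction). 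Rescaling, I may assume that coefficient is $1$, so $w = \sigma_{d_i} + (\text{lower-indexed } (q+1)\text{-simplices})$.

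Next I would trim $w$ so that its support among $(q+1)$-simplices lies in the index range $(b_i, d_i)$, i.e.\ so that $w$ has the form required by \eqref{eq:vc-1}. The point is that any $(q+1)$-simplex $\sigma_k$ with $k \le b_i$ that appears in $w$ can be removed: such a $\sigma_k$ is a $(q+1)$-simplex already present in $X_{b_i}$, so $\partial\sigma_k \in B_q(X_{b_i}) \subseteq B_q(X_{d_i-1})$, and subtracting it from $w$ changes $\partial w$ only within $B_q(X_{b_i-1})$ — in particular it does not affect whether $\partial w$ still projects nontrivially onto $\langle\sigma_{b_i}\rangle$ in the decomposition \eqref{eq:birth_post}, nor does it change the coefficient of $\sigma_{d_i}$. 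After this trimming, set $z := w$; by construction $z = \sigma_{d_i} + \sum_{\sigma_k \in \mathcal{F}_{q+1}} \alpha_k\sigma_k$, giving \eqref{eq:vc-1}.

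It remains to verify \eqref{eq:vc-2} and \eqref{eq:vc-3}. For \eqref{eq:vc-2}: $\partial z$ is homologous to $z_i$ in $X$, and in fact, because $z$ involves only simplices of index $< d_i$, its boundary $\partial z \in C_q(X_{d_i-1})$; I must argue that $\partial z$ has zero coefficient on every $q$-simplex $\tau = \sigma_k$ with $b_i < k < d_i$. The cleanest route is to track degrees of freedom: $\partial z = z_i$ up to a boundary coming from the removed $\sigma_k$'s, and $z_i \in Z_q(X_{b_i}) = Z_q(X_{b_i-1}) \oplus \langle\sigma_{b_i}\rangle$ by \eqref{eq:birth_post}, so $z_i$ itself has zero coefficient on all $q$-simplices of index $> b_i$; the correction we subtracted is a sum of $\partial\sigma_k$ with $k \le b_i$, each of which lies in $C_q(X_{b_i})$ and hence also has zero coefficient on $q$-simplices of index $> b_i$. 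Thus $\partial z$ has zero coefficient on every $\tau \in \mathcal{F}_q$, which is \eqref{eq:vc-2}. For \eqref{eq:vc-3}: by \eqref{eq:birth_pre} $z_i \notin Z_q(X_{b_i-1})$, so writing $z_i$ in the basis \eqref{eq:birth_post} its $\sigma_{b_i}$-coefficient is nonzero; since the trimming only altered $\partial z$ inside $Z_q(X_{b_i-1}) \supseteq B_q(X_{b_i-1})$, which does not touch $\langle\sigma_{b_i}\rangle$, the $\sigma_{b_i}$-coefficient of $\partial z$ is still nonzero, i.e.\ $\sigma_{b_i}^*(\partial z) \ne 0$.

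The main obstacle I anticipate is the bookkeeping in the trimming step — making sure that removing the $(q+1)$-simplices of index $\le b_i$ genuinely leaves all three conditions intact, and in particular that one cannot accidentally kill the $\sigma_{b_i}$-coefficient or reintroduce a forbidden $q$-simplex. This is really a statement that the two filtration levels interact correctly, and it hinges on carefully using the direct-sum decompositions \eqref{eq:birth_post} and \eqref{eq:death_post} rather than just homology classes. A subtlety worth double-checking is whether one also needs to remove $(q+1)$-simplices of index $\ge d_i$ other than $\sigma_{d_i}$; but since $w$ was chosen in $C_{q+1}(X_{d_i})$ there are none of index $> d_i$, and $\sigma_{d_i}$ is exactly the leading term we want, so this case does not arise.
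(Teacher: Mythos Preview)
Your argument is correct and is essentially the same as the paper's: both take a bounding $(q+1)$-chain $w$ for $z_i$ in $X_{d_i}$ with leading term $\sigma_{d_i}$, split off the part supported on $(q+1)$-simplices of index $< b_i$ (your ``trimming'' step is the paper's decomposition $w = w_0 + w_1$), and then verify \eqref{eq:vc-2} and \eqref{eq:vc-3} for the remaining piece using $z_i \in C_q(X_{b_i})$ and $\partial w_1 \in C_q(X_{b_i-1})$. One small cleanup: since $\sigma_{b_i}$ is a $q$-simplex, any $(q+1)$-simplex $\sigma_k$ with $k \le b_i$ actually has $k < b_i$, so it already lies in $X_{b_i-1}$; this is what makes your later claim that the correction sits in $B_q(X_{b_i-1})$ (rather than just $B_q(X_{b_i})$) go through, and it is worth saying explicitly.
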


The following theorem ensures that the volume optimal cycle
is good to represent the homology generator corresponding to $(b_i, d_i)$.

\begin{theorem}\label{thm:good_voc}
  Let $\{x_j \mid j=1, \ldots, p\}$ be all persistence cycles for $D_q(\X)$.
  If $z_i$ is a persistent volume of $(b_i, d_i) \in D_q(\X)$,
  $\{x_j \mid j\not = i\} \cup \{\partial z_i\}$ are also
  persistence cycles for $D_q(\X)$.
\end{theorem}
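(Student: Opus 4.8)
The plan is to verify directly that the family $\{x_j \mid j\neq i\}\cup\{\partial z_i\}$ meets the defining requirements \eqref{eq:birth_pre}--\eqref{eq:death_post} of persistence cycles, attaching $\partial z_i$ to the pair $(b_i,d_i)$ and each $x_j$ to its original pair $(b_j,d_j)$. For every $j\neq i$ the cycle $x_j$ and its pair are untouched, so those conditions hold by hypothesis; moreover the direct-sum identities written inside \eqref{eq:birth_post} and \eqref{eq:death_post} concern only the filtration and the indices $b_i,d_i$, not the chosen cycles, so they hold exactly as for the original family. Hence the whole statement reduces to checking four membership facts for $z_i':=\partial z_i$ against $(b_i,d_i)$: $z_i'\notin Z_q(X_{b_i-1})$, $z_i'\in Z_q(X_{b_i})$, $z_i'\notin B_q(X_k)$ for $k<d_i$, and $z_i'\in B_q(X_{d_i})$. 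I will also invoke the remark following \eqref{eq:ph-basis}, so that \eqref{eq:ph-basis} itself need not be re-derived for the new family.

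For the birth side, the key point I would establish is that $z_i'$ in fact lies in $C_q(X_{b_i})$. By \eqref{eq:vc-1} every simplex occurring in $z_i$ has index $\le d_i$, with $\sigma_{d_i}$ the unique one of index $d_i$; by Condition~\ref{cond:ph} each proper face of such a simplex has strictly smaller index, so $z_i'=\partial z_i\in C_q(X_{d_i-1})$. Among $q$-simplices of index at most $d_i-1$, those whose index lies strictly between $b_i$ and $d_i$ are precisely the elements of $\mathcal{F}_q$, and their coefficients in $\partial z_i$ vanish by \eqref{eq:vc-2}; thus $z_i'$ is supported on $q$-simplices of index $\le b_i$, i.e. $z_i'\in C_q(X_{b_i})$. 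Since $\partial z_i'=\partial\partial z_i=0$, this yields $z_i'\in Z_q(X_{b_i})$, which is \eqref{eq:birth_post}. For \eqref{eq:birth_pre}, condition \eqref{eq:vc-3} says the coefficient of $\sigma_{b_i}$ in $z_i'$ is nonzero, and $\sigma_{b_i}$ is a $q$-simplex because $(b_i,d_i)\in D_q(\X)$, so $z_i'\notin C_q(X_{b_i-1})\supseteq Z_q(X_{b_i-1})$.

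For the death side, $z_i\in C_{q+1}(X_{d_i})$ (all its simplices have index $\le d_i$) immediately gives $z_i'\in B_q(X_{d_i})$, which is \eqref{eq:death_post}. For \eqref{eq:death_pre}, write $z_i=\sigma_{d_i}+w$ with $w:=\sum_{\sigma_k\in\mathcal{F}_{q+1}}\alpha_k\sigma_k\in C_{q+1}(X_{d_i-1})$, so $z_i'=\partial\sigma_{d_i}+\partial w$ and $\partial w\in B_q(X_{d_i-1})$. If $z_i'\in B_q(X_k)$ for some $k<d_i$, then $z_i'\in B_q(X_{d_i-1})$, hence $\partial\sigma_{d_i}=z_i'-\partial w\in B_q(X_{d_i-1})$; but \eqref{eq:death_post} applied to $x_i$ asserts precisely that $\partial\sigma_{d_i}\notin B_q(X_{d_i-1})$ (that is what makes the stated sum a genuine direct sum), a contradiction.

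The one place that calls for care rather than being automatic is the support analysis in the birth step: one must pin down, using Condition~\ref{cond:ph} together with \eqref{eq:vc-2}, that no $q$-simplex of index exceeding $b_i$ survives in $\partial z_i$ — here it is essential that $\mathcal{F}_q$ collects \emph{all} $q$-simplices of index strictly between $b_i$ and $d_i$ and that faces strictly lower the index. Everything else is a short direct computation, and combined with the remark after \eqref{eq:ph-basis} it finishes the proof.
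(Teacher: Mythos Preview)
Your proof is correct and follows essentially the same approach as the paper's own proof: both reduce the claim to verifying $\partial z_i\in Z_q(X_{b_i})\setminus Z_q(X_{b_i-1})$ and $\partial z_i\in B_q(X_{d_i})\setminus B_q(X_{d_i-1})$, using \eqref{eq:vc-2} for the support analysis, \eqref{eq:vc-3} for the nonvanishing $\sigma_{b_i}$-coefficient, and the direct-sum decomposition in \eqref{eq:death_post} for the death side. Your write-up is simply more explicit about why \eqref{eq:vc-2} forces $\partial z_i\in C_q(X_{b_i})$, which the paper states without elaboration.
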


Intuitively say, a homology generator is dead by filling the internal volume of
a ring, a cavity, etc., and a persistent volume is such an internal volume.
The volume optimal cycle
minimize the internal volume instead of the size of the cycle.

\begin{proof}[Proof of Theorem \ref{thm:existence_voc}]
  Let $z_i$ be a persistence cycle satisfying (\ref{eq:birth_pre}-\ref{eq:death_post}).
  Since
  \begin{align*}
    z_i \in B_q(X_{d_i}) \backslash B_q(X_{d_i-1}),
  \end{align*}
  we can write $z_i$ as follows.
  \begin{equation}
    \begin{aligned}
      z_i &= \partial (w_0 + w_1), \\
      w_0 &= \sigma_{d_i} + \sum_{\sigma_k \in \mathcal{F}_{q+1}} \alpha_k \sigma_k,\\
      w_1 &= \sum_{\sigma_k \in \mathcal{G}_{q+1}} \alpha_k \sigma_k,
    \end{aligned}\label{eq:phbase_decomp}
  \end{equation}
  where $\mathcal{G}_{q+1} = \{\sigma_k: (q+1)\textrm{-simplex} \mid k < b_i\}$.
  Note that the coefficient of $\sigma_{d_i}$ in $w_0$ can be normalized as in
  \eqref{eq:phbase_decomp}.
  Now we prove that $w_0$ is a persistent volume.
  From $z_i \in Z_q(X_{b_i})$ and $\partial w_1 \in C_q(X_{b_i-1})$, we have
  $\partial w_0 = z_i - \partial w_1 \in C_q(X_{b_i})$ and this means that
  $\tau^*(\partial w_0) = 0$ for all $\tau \in \mathcal{F}_q$.
  From $\partial w_1 \in C_q(X_{b_i-1})$, we have $\sigma_{b_i}^*(\partial w_1) = 0$
  and therefore $\sigma_{b_i}^*(\partial w_0) = \sigma_{b_i}^*(z_i)$, and the right hand side
  is not zero since
  $z_i \in Z_q(X_{b_i}) \backslash Z_q(X_{b_i-1}) \subset C_q(X_{b_i})\backslash C_q(X_{b_i-1})$.
  Therefore
  $w_0$ satisfies all conditions (\ref{eq:vc-1}-\ref{eq:vc-3}).
\end{proof}

\begin{proof}[Proof of Theorem \ref{thm:good_voc}]
  We prove the following arguments.
  The theorem follows from these arguments.
  \begin{align*}
    \partial z_i &\in Z_q(X_{b_i}) \backslash Z_q(X_{b_i-1}), \\
    \partial z_i &\in B_q(X_{d_i}) \backslash B_q(X_{d_i-1}).
  \end{align*}
  The condition \eqref{eq:vc-2},
  $\tau^*(\partial z_i) = 0 \mbox{ for all } \tau \in \mathcal{F}_{q} $,
  means $\partial z_i \in Z_q(X_{b_i})$.
  The condition \eqref{eq:vc-3},
  $\sigma_{b_i}^*(\partial z_i) \not = 0$, means
  $\partial z_i \not \in Z_q(X_{b_i-1})$.
  Since $\partial z_i = \partial \sigma_{d_i} + \sum_{\sigma_k \in \mathcal{F}_{q+1}} \alpha_k \partial \sigma_k,$
  and $B_q(X_{d_i}) = B_q(X_{d_i - 1}) \oplus \left< \partial \sigma_{d_i} \right>$,
  we have $\partial z_i \in B_q(X_{d_i}) \backslash B_q(X_{d_i-1})$ and this finishes the
  proof.
\end{proof}


\subsection{Algorithm for volume optimal cycles}
To compute the volume optimal cycles, we can apply the same strategies as
optimal cycles. Using linear programming with $\R$ coefficient and
$\ell^1$ norm is efficient and
gives sufficiently good results. Using integer programming is slower, but it gives
better results.

Now we remark the condition \eqref{eq:vc-3}. In fact it is impossible
to handle this condition by linear/integer programming directly.
We need to replace this condition
to $|\sigma_{b_i}^*(\partial z)| \geq \epsilon$ for sufficiently small $\epsilon > 0$
and we need to solve the optimization problem twice
for $\sigma_{b_i}^*(\partial z) \geq \epsilon$ and
$\sigma_{b_i}^*(\partial z) \leq -\epsilon$. However, as mentioned later,
we can often remove the constraint \eqref{eq:vc-3} to solve the problem and
this fact is useful for faster computation.

We can also apply the following heuristic performance improvement technique
to the algorithm for an alpha filtration by using the locality of
an optimal volume.
The simplices which contained in the optimal volume for $(b_i, d_i)$,
are contained 
in a neighborhood of $\sigma_{d_i}$. Therefore we take a parameter $r > 0$, and
we use
$\mathcal{F}_q^{(r)} = \{\sigma \in \mathcal{F}_q \mid \sigma \subset B_r(\sigma_{d_i}) \}$
instead of $\mathcal{F}_q$ to reduce the size of
the optimization problem,
where $B_r(\sigma_{d_i})$ is the ball of radius $r$ whose center is the
centroid of $\sigma_{d_i}$.
Obviously, we cannot find a solution with a too small $r$.
In Algorithm~\ref{alg:volopt}, $r$ is properly chosen by a user but
the computation software 
can automatically increase $r$ when the optimization problem cannot find
a solution.

We also use another heuristic for faster computation.
To treat the constraint \eqref{eq:vc-3}, we need to apply linear programming twice
for positive case and negative case.
In many examples, the optimized solution automatically satisfies \eqref{eq:vc-3}
even if we remove the constrain.
There is an example in which the corner-cutting does not work (shown in
\ref{subsec:properties-voc}), but it works well in many cases.
One way is that we try to solve the linear programming without \eqref{eq:vc-3} and
check the \eqref{eq:vc-3}, and if \eqref{eq:vc-3} is satisfied, output the solution.
Otherwise, we solve the linear programming twice with \eqref{eq:vc-3}.

The algorithm to compute a volume optimal cycle for an alpha filtration is
Algorithm~\ref{alg:volopt}.

\begin{algorithm}[h!]
  \caption{Algorithm for a volume optimal cycle}\label{alg:volopt}
  \begin{algorithmic}
    \Procedure{Volume-Optimal-Cycle}{$\X, r$}
    \State Compute the persistence diagram $D_q(\X)$
    \State Choose a birth-death pair $(b_i, d_i) \in D_q(\X)$ by a user
    \State Solve the following optimization problem:
    \begin{align*}
      \mbox{minimize } &\|z\|_1, \mbox{ subject to:}\\
      z &= \sigma_{d_i} + \sum_{\sigma_k \in \mathcal{F}_{q+1}^{(r)}} \alpha_k \sigma_k, \\
      \tau^*(\partial z) &= 0 \mbox{ for all } \tau \in \mathcal{F}_{q}^{(r)}. \\      
    \end{align*}
    \If{we find the optimal solution $\hat{z}$}
      \If{$\sigma_{b_i}^*(\partial \hat{z}) \not = 0$}
        \State \Return $\hat{z}$ and $\partial \hat{z}$
        \Else
        \State Retry optimization twice with the additional constrain:
        \begin{align*}
          \sigma_{b_i}^*(\partial z) \geq \epsilon \text{ or }
          \sigma_{b_i}^*(\partial z) \leq -\epsilon
        \end{align*}
      \EndIf  
    \Else
    \State \Return the error message to the user to choose larger $r$.
    \EndIf
    \EndProcedure
  \end{algorithmic}
\end{algorithm}

If your filtration is not an alpha filtration, possibly
you cannot use the locality technique. However, in that case,
the core part of the algorithm works fine and you can use the algorithm.

\subsection{Some properties about volume optimal cycles}
\label{subsec:properties-voc}
In this subsection, we remark some properties about volume optimal cycles.

First, the volume optimal cycle for a birth-death pair
is not unique. Figure~\ref{fig:multiple-voc} shows such an example.
In this example, $D_1 = \{(1, 5), (3, 4), (2, 6)\}$ and
both (b) and (c) is the optimal volumes of the birth-death pair $(2, 6)$.
In this filtration, any weighted sum of (b) and (c) with weight $\lambda$
and $1-\lambda$ ($0 \leq \lambda \leq 1$)
in the sense of chain complex is the volume optimal cycle of $(2, 6)$
if we use $\R$ as a coefficient and $\ell^1$ norm.
However, standard linear programing algorithms
choose an extremal point solution, hence the algorithms choose either $\lambda=0$ or
$\lambda=1$ and our algorithm outputs either (b) or (c).

\begin{figure}[htbp]
  \centering
  \includegraphics[width=\hsize]{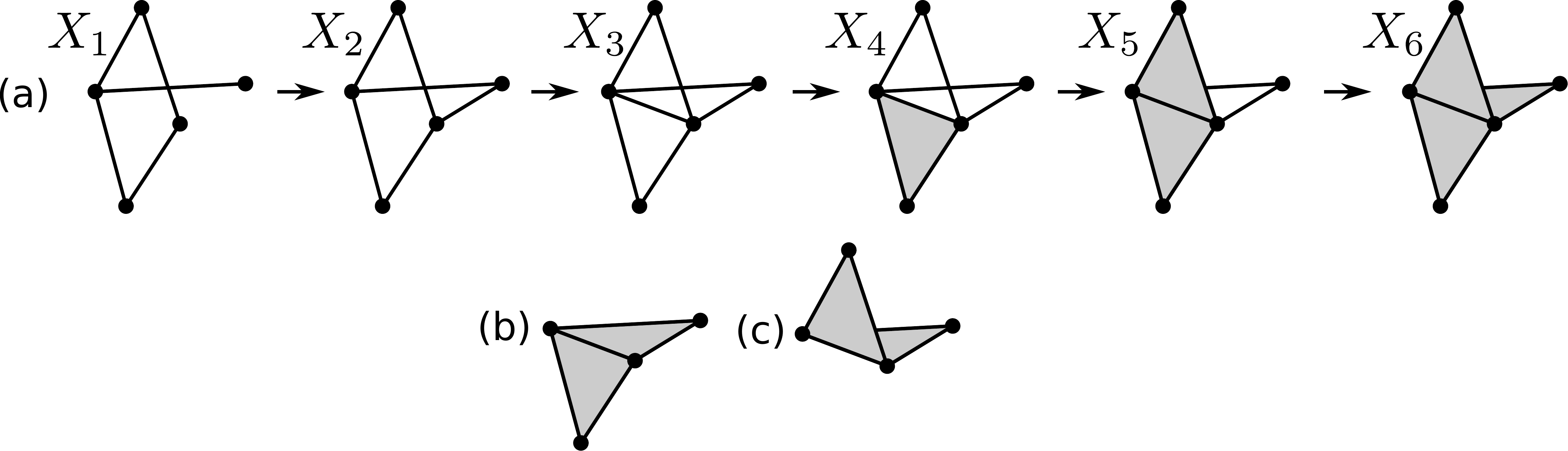}
  \caption{An example of non-unique volume optimal cycles.}
  \label{fig:multiple-voc}
\end{figure}

\begin{figure}[htbp]
  \centering
  \includegraphics[width=0.9\hsize]{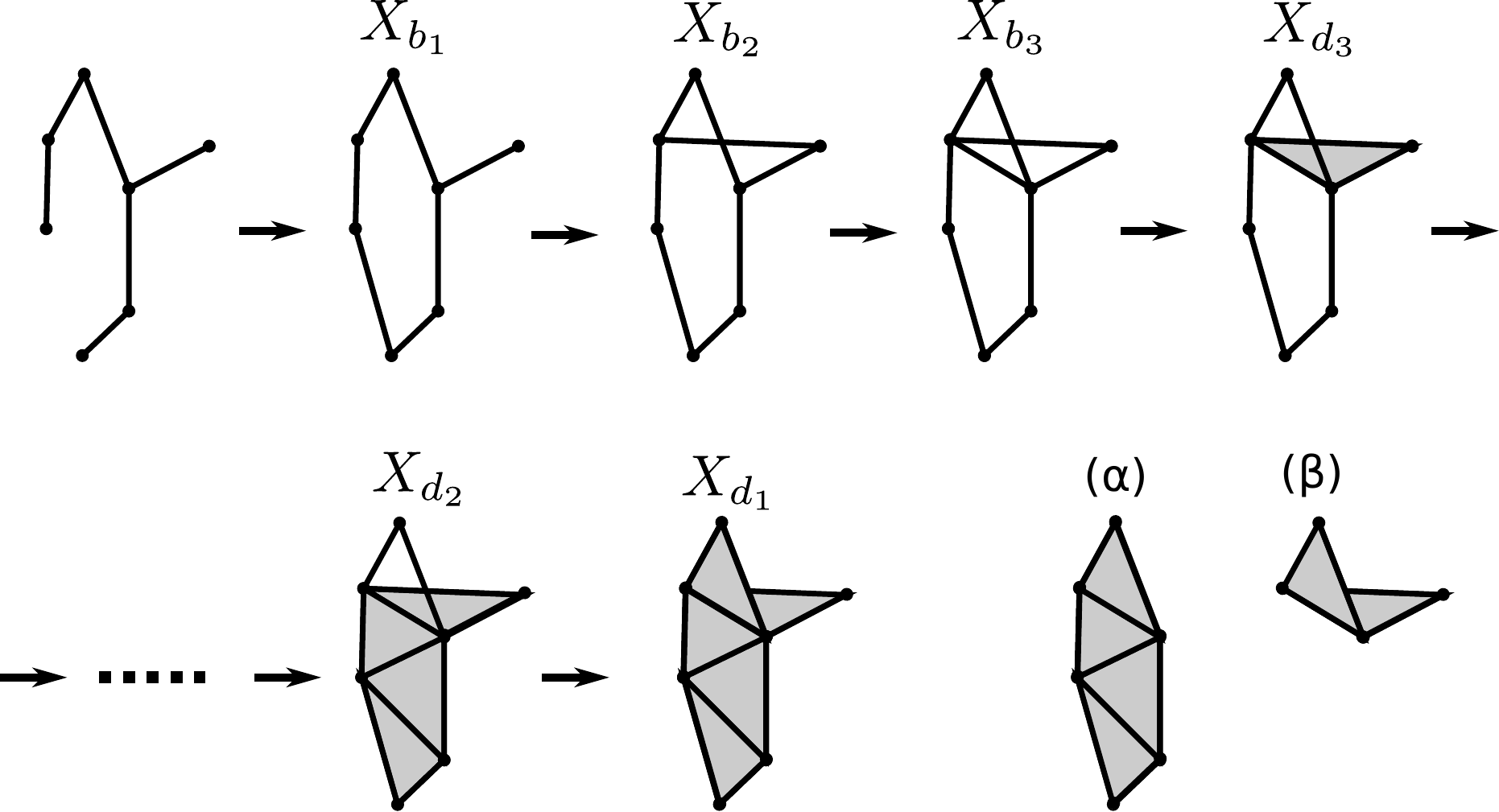}
  \caption{An example of the failure of the computation of the volume optimal cycle
  if the constrain \eqref{eq:vc-3} is removed.}
  \label{fig:voc-failure}
\end{figure}

Second, by the example in Fig~\ref{fig:voc-failure}, we show  
that the optimization problem for the volume optimal cycle may
give a wrong solution if the constrain \eqref{eq:vc-3} is removed.
In this example, $(b_1, d_1), (b_2, d_2), (b_3, d_3)$ are birth-death pairs
in the 1st PD, and the volume optimal cycle for $(b_1, d_1)$ is ($\alpha$) in
Fig.~\ref{fig:voc-failure}, but the algorithm gives ($\beta$) if
the constrain \eqref{eq:vc-3} is removed.



\section{Volume optimal cycle on $(n-1)$-th persistent homology}\label{sec:vochd}
 
In this section, we consider a triangulation of a convex set in $\R^n$ and
its $(n-1)$-th persistent homology. More precisely, we assume the following
conditions.
\begin{cond}\label{cond:rn}
  A simplicial complex $X$ in $\R^n$ 
  satisfies the following conditions.
  \begin{itemize}
  \item Any $k$-simplex $(k<n)$ in $X$ is a face of an $n$-simplex
  \item $|X|$ is convex
  \end{itemize}
\end{cond}
For example, an alpha filtration satisfies the above conditions
if the point cloud has more than $n$ points and
satisfies the general position condition. In addition, we assume
Condition~\ref{cond:ph} to simplify the statements of results and algorithms.

The thesis \cite{voc} pointed out that 
$(n-1)$-th persistent homology is
isomorphic to 0th persistent cohomology of the dual filtration by the Alexander duality
under the assumption.
By using this fact, the thesis defined volume optimal cycles under different formalization
from ours. The thesis defined a volume optimal cycle as an output of
Algorithm~\ref{alg:volopt-hd-compute}.
In fact, the two definitions of volume optimal cycles are equivalent
on $(n-1)$-th persistent homology.
0th persistent cohomology is deeply related to the connected components,
and we can compute the volume optimal cycle by linear computation cost.
The thesis also pointed out that $(n-1)$-th persistent homology has a tree structure called
persistence trees (or PH trees).

In this section, we always use $\Zint_2$ as a coefficient of homology
since using $\Zint_2$ makes the problem easier.

The following theorems hold.
\begin{theorem}\label{thm:vochd-unique}
  The optimal volume for $(b_i, d_i) \in D_{n-1}(\X)$ is uniquely determined.
\end{theorem}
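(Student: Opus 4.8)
The plan is to exploit the Alexander-duality picture described just before the theorem: under Condition~\ref{cond:rn} and Condition~\ref{cond:ph}, the $(n-1)$-th persistent homology of $\X$ with $\Zint_2$ coefficients is isomorphic to the $0$-th persistent cohomology of the dual filtration, and the optimal volume for $(b_i,d_i)$ corresponds, under this duality, to a connected component of the complement/dual complex as it appears in the dual filtration. So I would first make this correspondence precise for a single fixed birth-death pair $(b_i,d_i)$: the persistent volume $z\in C_n(X)$ satisfying \eqref{eq:vc-1}--\eqref{eq:vc-3} is exactly the indicator chain of a set $S$ of $n$-simplices that (a) contains $\sigma_{d_i}$, (b) is ``closed'' at every $q$-simplex (here $q=n-1$) with index strictly between $b_i$ and $d_i$ — i.e. such a codimension-one face is either interior to $S$ or on the outer boundary $|X|$ — and (c) has $\sigma_{b_i}$ on its boundary. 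Condition~\eqref{eq:vc-2} forces $S$ to be, in the dual graph on $n$-simplices, a union of connected components of the subcomplex obtained after deleting the dual edges corresponding to $(n-1)$-simplices appearing after time $b_i$; condition~\eqref{eq:vc-1} pins down that $\sigma_{d_i}$ lies in $S$; and condition~\eqref{eq:vc-3} rules out $S$ being a union that has already merged through $\sigma_{b_i}$.

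The key steps, in order, would be: (1) reformulate ``persistent volume'' combinatorially as a set $S$ of top-dimensional simplices as above, using the fact (Condition~\ref{cond:rn}) that every $(n-1)$-simplex is a face of an $n$-simplex, so the coboundary/boundary bookkeeping reduces to adjacency in the dual graph; (2) observe that the constraint \eqref{eq:vc-2} alone says $S$ must be a union of the connected components of the dual graph of $X$ with the edges dual to $\mathcal F_{n-1}\cup\{\sigma_{d_i}\}$ removed — call these the admissible components — together possibly with the ``outer region''; (3) use the persistence-cycle normalization (\ref{eq:birth_pre}--\ref{eq:death_post}) and the Alexander-duality/merge-tree description to show that among all admissible unions containing $\sigma_{d_i}$, there is a \emph{unique minimal} one, namely the single connected component $C_{d_i}$ containing $\sigma_{d_i}$ at the moment just before the merge that kills the class, and that this $C_{d_i}$ already satisfies \eqref{eq:vc-3}; (4) show any other persistent volume is $C_{d_i}$ together with extra full admissible components, hence has strictly larger $\|\cdot\|_0$, so $C_{d_i}$ is the unique $\ell^0$-minimizer, i.e. the optimal volume is uniquely determined.

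For step~(3) the cleanest route is probably to argue directly rather than through the merge tree: if $z$ and $z'$ are two optimal volumes, then $z+z'$ (over $\Zint_2$) still satisfies \eqref{eq:vc-1} is false — it kills the $\sigma_{d_i}$ term — but $\partial(z+z')$ satisfies $\tau^*(\partial(z+z'))=0$ for all $\tau\in\mathcal F_{n-1}$ and also $\sigma_{d_i}^*(\partial(z+z'))=0$, so $\partial(z+z')\in B_{n-1}(X_{b_i-1})$; combined with minimality of $\|z\|_0=\|z'\|_0$ and the structure of the dual graph (each admissible component is indecomposable, so you cannot have $z$ strictly smaller unless $z+z'=0$), this should force $z=z'$. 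I would present it as: suppose $z\ne z'$; then $z+z'$ is a nonzero sum of full admissible components not containing $\sigma_{d_i}$, and subtracting such a component from $z$ (still over $\Zint_2$) yields a smaller persistent volume — provided \eqref{eq:vc-3} survives, which is where one has to be careful.

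The main obstacle is exactly that last point: controlling constraint \eqref{eq:vc-3} under the ``subtract a component'' operation. The paper itself flags (Figure~\ref{fig:voc-failure}) that dropping \eqref{eq:vc-3} can change the optimizer, so uniqueness genuinely uses it, and the delicate case is when two admissible components are glued precisely along $\sigma_{b_i}$ — there one must check that the minimal admissible union containing $\sigma_{d_i}$ does \emph{not} also contain the component on the other side of $\sigma_{b_i}$, which is where the hypothesis $b_i<d_i$ (the class is genuinely born at $b_i$, via \eqref{eq:birth_pre}) and the Alexander-duality identification of $\sigma_{b_i}$ with the merging $(n-1)$-face do the real work. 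I expect the rest (steps 1, 2, 4) to be essentially bookkeeping once the dictionary between $n$-chains and dual-graph vertex sets is set up carefully.
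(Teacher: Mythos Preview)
Your plan is essentially the paper's: both identify the optimal volume with the dual-graph connected component of $\sigma_{d_i}$ (the paper writes it as $x_{b_i}^{(d_i)}=\sum_{\bsigma_t\in\desc_{b_i}(\bsigma_{d_i})}\sigma_t$), prove that every persistent volume contains it, and verify that this component is itself a persistent volume. Where you differ is in the containment step: you note that \eqref{eq:vc-2} makes the support $S$ closed under the dual edges coming from $\mathcal F_{n-1}$, so $\sigma_{d_i}\in S$ immediately forces $C_{d_i}\subset S$; the paper instead passes through the full Alexander-duality machinery, expands $[\partial x]_{b_i}$ in the cohomology basis of $\tilde H^0(\bar X_{b_i})$, and then uses $Z_n(X_{b_i})=0$ to write $x$ as a sum of connected-component chains $x_s^{(b_i)}$, one of which must be $x_{d_i}^{(b_i)}$. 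Your shortcut is legitimate and cleaner. For the remaining piece---verifying \eqref{eq:vc-3} for $C_{d_i}$---both approaches ultimately lean on the duality/merge-tree description (that the dual $1$-cell $\bsigma_{b_i}$ joins the tree rooted at $\bsigma_{d_i}$ to a distinct tree in $(V_{b_i},E_{b_i})$), which is exactly the point you flag as delicate. Two small slips in your write-up: in step~(2) you want the dual graph \emph{restricted to} the edges dual to $\mathcal F_{n-1}$ (not with those edges removed), and $\sigma_{d_i}$ is an $n$-simplex, hence a vertex of the dual graph, so it should not appear in that edge set; also the ``merge that kills the class'' is really the merge at the \emph{birth} time $b_i$ in the dual filtration. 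Neither affects the soundness of the argument.
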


\begin{theorem}\label{thm:vochd-tree}
  If $z_i$ and $z_j$ are the optimal volumes for two different birth-death
  pairs
  $(b_i, d_i)$ and
  $(b_j, d_j)$ in $D_{n-1}(\X)$, one of the followings holds:
  \begin{itemize}
  \item $z_i \cap z_j = \emptyset$,
  \item $z_i \subset z_j$,
  \item $z_i \supset z_j$.
  \end{itemize}
  Note that we can naturally regard any
  $z = \sum_{\sigma: n\text{-simplex}} k_{\sigma} \sigma \in C_{n}(X)$ as a subset of $n$-simplices of $X$,
  $\{\sigma : n\text{-simplex} \mid k_{\sigma} \not = 0\}$,
  since we use $\Zint_2$ as a homology coefficient.
\end{theorem}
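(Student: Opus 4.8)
The plan is to reduce the statement to the elementary fact that the connected components of a graph can only be merged --- never split --- as edges are added, by working in the Alexander--dual (dual-graph) description of $(n-1)$-st persistent homology.

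First I would record the structural consequences of Condition~\ref{cond:rn}. Since $|X|$ is a triangulated convex body in $\R^n$ it is an $n$-ball, so $H_{n-1}(X)=0$ and $Z_n(X)=0$; hence the boundary map $\partial_n\colon C_n(X)\to Z_{n-1}(X)$ is a $\Zint_2$-linear isomorphism, i.e. every $(n-1)$-cycle bounds a \emph{unique} $n$-chain, and moreover every $(n-1)$-simplex of $X$ is a face of exactly one or two $n$-simplices (boundary, resp.\ interior). This lets me pass to the dual graph $G$ whose vertices are the $n$-simplices of $X$ together with one extra vertex $v_\infty$ (``the outside''), with an edge $e_\tau$ for each $(n-1)$-simplex $\tau$ joining the one or two $n$-simplices having $\tau$ as a face ($v_\infty$ standing in for the missing coface of a boundary $\tau$). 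Under this dictionary an $n$-chain $z=\sum k_\sigma\sigma$ becomes a vertex set $S\subseteq\{n\text{-simplices}\}$, $\partial z$ becomes the edge cut $\partial S$ between $S$ and its complement (with $\|z\|_0=|S|$), and each $(n-1)$-simplex $\tau$ carries its filtration index $\operatorname{ind}(\tau)$.

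Next I would rewrite the volume-optimal problem. Fix $(b_i,d_i)\in D_{n-1}(\X)$ and let $G_{>b_i}$ be the subgraph of $G$ keeping only the edges $e_\tau$ with $\operatorname{ind}(\tau)>b_i$. If $z$ is any persistent volume of $(b_i,d_i)$, then by \eqref{eq:vc-1} its supporting simplices all have index $\le d_i$, so (by Condition~\ref{cond:ph}) their faces all have index $<d_i$, and therefore \eqref{eq:vc-2} forces $\partial z$ to be supported on $(n-1)$-simplices of index $\le b_i$; equivalently, $z$ as a vertex set is a union of connected components of $G_{>b_i}$, and by \eqref{eq:vc-1} it contains the vertex $\sigma_{d_i}$. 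Hence $z\supseteq Q_i$, where $Q_i$ is the connected component of $\sigma_{d_i}$ in $G_{>b_i}$, so $\|z\|_0\ge|Q_i|$. The decisive lemma --- this is where Alexander duality / the persistence pairing is really used --- is that $Q_i$ is itself a persistent volume of $(b_i,d_i)$: one must check that every $n$-simplex of $Q_i$ other than $\sigma_{d_i}$ has index in $(b_i,d_i)$ (so \eqref{eq:vc-1} holds) and that $\sigma_{b_i}\in\partial Q_i$ (so \eqref{eq:vc-3} holds); \eqref{eq:vc-2} is automatic because $Q_i$ is a $G_{>b_i}$-component. Granting the lemma, $Q_i$ is the minimiser, so by Theorem~\ref{thm:vochd-unique} the optimal volume is exactly $z_i=Q_i$: the set of $n$-simplices in the $G_{>b_i}$-component of the death simplex $\sigma_{d_i}$.

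Finally the tree property falls out. Take $i\neq j$; distinct birth-death pairs have distinct birth simplices, so without loss of generality $b_i>b_j$, whence $G_{>b_i}$ is a subgraph of $G_{>b_j}$ and $z_i=Q_i$ is contained in the $G_{>b_j}$-component $Q$ of $\sigma_{d_i}$. If $\sigma_{d_j}\in Q$, then $Q$ is the $G_{>b_j}$-component of $\sigma_{d_j}$, i.e. $Q=z_j$, giving $z_i\subseteq z_j$; otherwise $z_j=Q_j$ is a different component of $G_{>b_j}$, disjoint from $Q\supseteq z_i$, giving $z_i\cap z_j=\emptyset$ (and $b_j>b_i$ yields $z_i\supset z_j$ symmetrically). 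I expect the only genuine obstacle to be the decisive lemma: verifying that $Q_i$ satisfies \eqref{eq:vc-1} and \eqref{eq:vc-3} amounts to re-deriving, at chain level, the Alexander-duality identification of $(b_i,d_i)$ with a birth/merge event in the $0$-th persistent homology of the dual filtration --- in particular that the birth $(n-1)$-simplex $\sigma_{b_i}$ is an edge of $G$ that separates $Q_i$ from the rest of the graph. Everything else is routine bookkeeping with the index filtration.
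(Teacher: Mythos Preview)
Your proposal is correct and follows essentially the same route as the paper: identify the optimal volume for $(b_i,d_i)$ with the connected component of $\sigma_{d_i}$ in the dual graph after deleting the edges of index $\le b_i$ (the paper phrases this as $\desc_{b_i}(\bsigma_{d_i})$ in the merge tree, which by Fact~\ref{fact:graph-cc} is the same set), and then derive the laminar property from the refinement of connected components as edges are added. You have also correctly isolated the one substantive step --- your ``decisive lemma'' that $Q_i$ satisfies \eqref{eq:vc-1} and \eqref{eq:vc-3} --- which is precisely what the paper establishes via the merge-tree machinery in Section~\ref{subsec:treemerge-0-pcohom} and the proof of Theorem~\ref{thm:vochd-alg}(ii).
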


From Theorem~\ref{thm:vochd-tree}, we know that
$D_{n-1}(\X)$ can be regarded as a forest (i.e. a set of trees)
by the inclusion relation. The trees are called \textit{persistence trees}.

We can compute all optimal volumes and persistence trees on $D_{n-1}(\X)$
by the merge tree algorithm (Algorithm~\ref{alg:volopt-hd-compute}).
This algorithm is a modified version of the algorithm in \cite{voc}.
To describe the algorithm,
we prepare a directed graph $(V, E)$ where $V$ is a set of nodes and
$E$ is a set of edges. In the algorithm, an element of $V$ is
a $n$-cell in $X \cup \{\sigma_{\infty}\}$ and an element of $E$ is
a directed edge between two $n$-cells, where $\sigma_\infty = \R^n \backslash X$
is the $n$-cell in the one point compactification space $\R^n \cup \{\infty\} \simeq S^n$. 
An edge has extra data
in $\Zint$ and we write the edge from $\sigma$ to $\tau$ with
extra data $k$ as $(\sigma \xrightarrow{k} \tau)$.
Since the graph is always a forest through the whole algorithm,
we always find a root
of a tree which contains a $n$-cell $\sigma$ in the graph $(V, E)$
by recursively following edges from $\sigma$.
We call this procedure \textproc{Root}($\sigma, V, E$).

\begin{algorithm}
  \caption{Computing persistence trees by merge-tree algorithm}\label{alg:volopt-hd-compute}
  \begin{algorithmic}
    \Procedure{Compute-Tree}{$\X$}
    \State initialize $V = \{\sigma_\infty\}$ and $E = \emptyset$
    \For{$k=K,\ldots,1$}
      \If{$\sigma_k$ is a $n$-simplex}
        \State add $\sigma_k$ to $V$
      \ElsIf{$\sigma_k$ is a $(n-1)$-simplex}
        \State let $\sigma_s$ and $\sigma_t$ are two $n$-cells
          whose common face is $\sigma_k$
        \State $\sigma_{s'} \gets \textproc{Root}(\sigma_s, V, E)$
        \State $\sigma_{t'} \gets \textproc{Root}(\sigma_t, V, E)$
        \If{$s'=t'$}
          \State \textbf{continue}
        \ElsIf{$s'> t'$}
          \State Add $(\sigma_{t'} \xrightarrow{k} \sigma_{s'})$ to $E$
        \Else
          \State Add $(\sigma_{s'} \xrightarrow{k} \sigma_{t'})$ to $E$
        \EndIf
      \EndIf
    \EndFor
    \Return $(V, E)$
    \EndProcedure
  \end{algorithmic}
\end{algorithm}

The following theorem gives us the interpretation of
the result of the algorithm to the persistence information.
\begin{theorem}\label{thm:vochd-alg}
  Let $(V, E)$ be a result of Algorithm~\ref{alg:volopt-hd-compute}. Then
  the followings hold.
  \begin{enumerate}[(i)]
  \item $D_{n-1}(\X) = \{(b, d) \mid (\sigma_d \xrightarrow{b} \sigma_s) \in E\}$
  \item The optimal volume for $(b, d)$ is
    all descendant nodes of $\sigma_d$ in $(E, V)$
  \item The persistence trees is computable from $(E, V)$. That is,
    $(b_i, d_i)$ is a child of $(b_j, d_j)$ if and only if there are edges
    $\sigma_{d_i} \xrightarrow{b_i} \sigma_{d_j} \xrightarrow{b_j} \sigma_{s}$.
  \end{enumerate}
\end{theorem}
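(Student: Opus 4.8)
The plan is to read Algorithm~\ref{alg:volopt-hd-compute} as the classical union--find computation of $0$-dimensional persistence applied to the \emph{complement filtration}, and then transport the output to $D_{n-1}(\X)$ via Alexander duality. I would work in the one-point compactification $S^n=\R^n\cup\{\infty\}$, whose relevant cell structure has as $n$-cells the $n$-simplices of $X$ together with $\sigma_\infty=S^n\setminus|X|$; by Condition~\ref{cond:rn} every $(n-1)$-simplex of $X$ is a face of exactly two of these $n$-cells, which is exactly what makes the step ``let $\sigma_s$ and $\sigma_t$ be the two $n$-cells whose common face is $\sigma_k$'' well defined. Writing $U_k=S^n\setminus|X_k|$, Condition~\ref{cond:ph} (all proper faces of $\sigma_k$ lie in $X_{k-1}$) gives $U_{k-1}=U_k\sqcup\mathrm{relint}(\sigma_k)$, so reading $k$ from $K$ down to $1$: removing an $n$-simplex $\sigma_k$ creates a new component (its only $n$-cell is $\sigma_k$), removing an $(n-1)$-simplex $\sigma_k$ either merges the two components on its two sides or leaves $\pi_0$ unchanged, and removing a lower-dimensional simplex (codimension $\ge2$) changes nothing. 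Two further facts I would record: $H_n(X_k)=0$ for all $k$ (since $H_n(X_k)\hookrightarrow H_n(X)=0$ by convexity), so every $n$-simplex addition is a death for $H_{n-1}$; and $U_0=S^n$ is connected. These match ``every $n$-cell is eventually the source of exactly one edge'' with ``$\#D_{n-1}(\X)=\#\{n\text{-simplices of }X\}$''.

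Then I would prove, by downward induction on $k$, the union--find invariant: after the iteration for index $k$, the trees of $(V,E)$ are exactly the connected components of the $n$-cells of $U_{k-1}$, each tree is rooted at its $n$-cell of largest index, and attachments follow the elder rule. The base case uses that $\sigma_K$ must be an $n$-simplex (a lower-dimensional last simplex would be a proper face of a later one) and $U_{K-1}=\sigma_\infty\sqcup\mathrm{relint}(\sigma_K)$; the inductive step is the three-way case analysis above, where the edge ``smaller root $\to$ larger root'' records that the younger component — the one whose root has the smaller index, i.e.\ the one created later as $k$ decreases — dies. Now persistent Alexander duality (established in \cite{voc}; it also follows from naturality of classical Alexander duality under inclusions, with the dimension shift) identifies the $0$-dimensional persistence of the decreasing family $\{U_k\}_k$ with $H_{n-1}(\X)$: a summand $I(b,d)$ of $H_{n-1}(\X)$ corresponds to a component of the $U_k$'s that is created when $\sigma_d$ is removed and dies when $\sigma_b$ is removed (and one checks $b<d$ from the invariant, since the dying root has index $>k$ at the merge). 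Because the union--find pairing is precisely the elder-rule persistence pairing, the edge $\sigma_d\xrightarrow{b}\sigma_s$ added at step $b$ encodes exactly the pair $(b,d)\in D_{n-1}(\X)$; this is (i).

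For (ii), I would first note that $\sigma_d$'s subtree is frozen once $\sigma_d$ acquires a parent (subsequent merges attach cells only below roots, never below a non-root), so the descendants of $\sigma_d$ in the final forest are precisely $\sigma_d$'s tree at step $b$, which by the invariant is the connected component of $\sigma_d$ among the $n$-cells of $U_b$. It then remains to identify that component, regarded as an element of $C_n(X;\Zint_2)$, with the optimal volume of $(b,d)$. Any persistent volume $z$ for $(b,d)$ is $\sigma_d$ plus $n$-simplices of index in $(b,d)$, hence a union of $n$-cells of $U_b$; condition \eqref{eq:vc-2}, together with the fact that $\partial z$ cannot involve an $(n-1)$-simplex of index $\ge d$ (such a simplex is a face only of $n$-cells of index $>d$), forces $\partial z$ to be supported on $|X_b|$, so $z$ is a union of components of the $n$-cells of $U_b$; it contains $\sigma_d$ and cannot contain $\sigma_\infty$ (whose $U_b$-component differs from $\sigma_d$'s, since $\sigma_d$ is a root when it dies). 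Hence the $\|\cdot\|_0$-minimal $z$ is the single component of $\sigma_d$, and it automatically satisfies \eqref{eq:vc-3} because the merging wall $\sigma_b$ lies on its boundary (one of $\sigma_b$'s two adjacent cells is in that component, the other is not). Finally, (iii) follows by combining (ii) with Theorem~\ref{thm:vochd-tree}: in the tree $(V,E)$ rooted at $\sigma_\infty$, one subtree contains another iff the root of the latter is a descendant of the root of the former, so $z_i\subseteq z_j\iff\sigma_{d_i}$ is a descendant of $\sigma_{d_j}$; thus $(b_i,d_i)$ is a child of $(b_j,d_j)$ in the persistence tree iff $\sigma_{d_i}$ is a direct child of $\sigma_{d_j}$, i.e.\ iff $E$ contains $\sigma_{d_i}\xrightarrow{b_i}\sigma_{d_j}$, and then necessarily also $\sigma_{d_j}\xrightarrow{b_j}\sigma_s$ since $\sigma_{d_j}\neq\sigma_\infty$.

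The main obstacle I anticipate is the bookkeeping around Alexander duality: matching the forward index $k$ of $\X$ to the events of the complement filtration exactly right (which index is the birth, which the death, and why $b<d$), verifying that the correspondence is an isomorphism of persistence modules rather than a mere levelwise isomorphism, and correctly handling reduced versus unreduced $H_0$ — the last point being clean here because $\sigma_\infty$ is a permanent root and $D_{n-1}(\X)$ has no infinite bars (as $|X|$ is convex). Everything else — the union--find invariant, the ``frozen subtree'' remark, and the laminar-family argument for (iii) — is routine once the duality dictionary is pinned down; and the combinatorial characterization of the optimal volume used in (ii) may alternatively be quoted from the equivalence of the two definitions of volume optimal cycles stated earlier in this section, if one prefers not to re-derive it.
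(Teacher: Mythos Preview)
Your proposal is correct and follows the same overall strategy as the paper: reduce $D_{n-1}(\X)$ to $0$-dimensional persistence of the complement via Alexander duality, identify the merge-tree algorithm with the elder-rule computation of that persistence, and characterize optimal volumes as connected components. The difference is one of execution rather than idea. The paper works algebraically: it passes to the dual cell decomposition $\bar K$, sets up $0$th persistent \emph{cohomology} on the decreasing filtration $\bar\X$, proves that the cochains $\hat y_s^{(k)}=\sum_{\bar\sigma_t\in\overline{\mathrm{desc}}_k(\bar\sigma_s)}\bar\sigma_t^*$ form a basis of $Z^0(Y_k)$ compatible with the restriction maps (Lemmas~\ref{lem:pcohom-basis}--\ref{lem:cohom-reduced}), and then for~(ii) uses the isomorphism $\Theta$ together with $Z_n(X_b)=0$ to write any persistent volume as $x=\sum_{\sigma_s\in R}x_s^{(b)}$, forcing $x_b^{(d)}\subset x$. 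You instead work geometrically with $\pi_0$ of the topological complements $U_k=S^n\setminus|X_k|$, and for~(ii) argue directly that the constraints \eqref{eq:vc-1}--\eqref{eq:vc-2} force $\partial z\subset|X_b|$, hence $z$ is a union of $U_b$-components of $n$-cells.

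Both routes arrive at the same characterization; yours is shorter because it avoids the explicit cochain-basis lemmas, while the paper's version makes the persistence-module isomorphism completely explicit (which is where you, correctly, flag the bookkeeping as the main obstacle). One small point to tighten: when you say ``cannot contain $\sigma_\infty$'' in~(ii), the cleaner reason is simply that $z\in C_n(X)$ by definition, so $\sigma_\infty$ never appears in $z$; what you actually need (and do use) is that the $U_b$-component of $\sigma_d$ is disjoint from $\sigma_\infty$, which follows from your invariant since $\sigma_d$ and $\sigma_\infty$ are distinct roots at step $b$. With that adjustment your argument for~(ii) also yields Theorem~\ref{thm:vochd-unique} directly, so invoking Theorem~\ref{thm:vochd-tree} in~(iii) is not circular.
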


The  theorems in this section can be proven from the following facts:
\begin{itemize}
\item From Alexander duality, for a simplicial complex $X$ in $\R^n$,
  \begin{align*}
    H_q(X) \simeq H^{n-q-1}((\R^n\backslash X)\cup\{\infty\}),
  \end{align*}
  holds.
  \begin{itemize}
  \item $\infty$ is required for one point compactification of $\R^n$.
  \item More precisely, we use the dual decomposition of $X$.
  \end{itemize}
\item By applying above Alexander duality to a filtration,
  $(n-1)$-th persistent homology is isomorphic to $0$-th persistent cohomology
  of the dual filtration.
\item On a cell complex $\bar{X}$, a basis of $0$-th cohomological vector space is
  given by
  \begin{align*}
    \{ \sum_{\sigma \in C} \sigma^* &\mid C \in \textrm{cc}(\bar{X})\},
  \end{align*}
  where $\textrm{cc}(\bar{X})$
  is the connected component decomposition of 0-cells in $\bar{X}$.
\item Merge-tree algorithm traces the change of connectivity in the filtration, and
  it gives the structure of 0-the persistent cohomology.
\end{itemize}

We prove the theorems in Appendix~\ref{sec:pfvochd}.

\subsection{Computation cost for merge-tree algorithm}
\label{sec:faster}

In the algorithm, we need to find the root from its descendant node.
The naive way to find the root is following the graph step by step
to the ancestors. In the worst case, the time complexity
of the naive way
is $O(N)$ where $N$ is the number of 
of $n$-simplices, and total time complexity of the algorithm becomes $O(N^2)$.
The union-find algorithm~\cite{unionfind} is used
for a similar data structure, and we can apply the idea of union-find algorithm.
By adding a shortcut path to the root in a similar way as the union-find algorithm,
the amortized time complexity is improved to almost constant time\footnote{
  More precisely, the amortized time complexity is bounded by the inverse of
  Ackermann function and it is less than 5 if
  the data size is less than $2^{2^{2^{2^{16}}}}$. Therefore we can regard the time
  complexity as constant.
}.
Using the technique, the total time complexity of the Algorithm~\ref{alg:volopt-hd-compute}
is $O(N)$.

\section{Comparison between volume optimal cycles
  and optimal cycles}\label{sec:compare}

In this section, we compare volume optimal cycles and optimal cycles.
In fact, optimal cycles and volume optimal cycles are identical in many cases.
However, since we can use optimal volumes in addition to volume optimal cycles,
we have more information than optimal cycles.
One of the most prominent advantage of volume optimal cycles is children birth-death pairs,
explained below.

\subsection{Children birth-death pairs}

In the above section, we show that there is a tree structure
on an $(n-1)$-th persistence diagram computed from
a triangulation of a convex set in $\R^n$. Unfortunately,
such a tree structure does not exist in a general case.
However, in the research of amorphous solids by persistent homology\cite{Hiraoka28062016},
a hierarchical structure of rings in $\R^3$ is effectively used, and
it will be helpful if we can find such a structure on a computer.
In \cite{Hiraoka28062016}, the hierarchical structure
was found by computing all optimal cycles and
searching multiple optimal cycles which have common vertices.
However, computing all optimal cycles or all volume optimal cycles
is often expensive as shown in Section \ref{subsec:performance} and
we require a cheaper method. The optimal volume is available for that purpose.
When the optimal volume for a birth-death pair $(b_i, d_i)$ is
$\hat{z} = \sigma_{d_i} + \sum_{\sigma_k \in \mathcal{F}_{q+1}} \hat{\alpha}_k \sigma_k$,
the \textit{children birth-death pairs} of $(b_i, d_i)$ is defined as follows:
\begin{align*}
  \{(b_j, d_j) \in D_q(\X) \mid \sigma_{d_j} \in \mathcal{F}_{q+1},
  \hat{\alpha}_{d_j} \not = 0 \}.
\end{align*}
This is easily computable from a optimal volume with low computation cost.

Now we remark that if we consider $(n-1)$-th persistent homology in $\R^n$,
the children birth-death pairs of $(b_i, d_i) \in D_{n-1}(\X)$ is identical to
all descendants of $(b_i, d_i)$ in the tree structure. This fact is known from
Theorem~\ref{thm:vochd-tree}. This fact suggests that we can use
children birth-death pairs as a good substitute for the tree structure appearing
on $D_{n-1}(\X)$ in $\R^n$. The ability of children birth-death pairs is shown in
Section \ref{sec:example-silica}, the example of amorphous silica.

\subsection{Some examples in which volume optimal cycles and
  optimal cycles are different}

We show some differences between optimal cycles and volume optimal cycles
on a filtration.
In Fig~\ref{fig:oc-voc-diff-1},
the 1st PD of this filtration is $\{(2, 5), (3, 4)\}$.
The optimal cycle of $(3, 4)$ is $z_1$ since 
$\|z_1\|_1 < \|z_2\|_1$ but the volume optimal cycle is $z_2$.
In this example, $z_2$ is better than $z_1$ to represent the
birth-death pair $(3, 4)$.
The example is deeply related to Theorem~\ref{thm:good_voc}.
Such a  theorem does not hold for optimal cycles and it means that an optimal cycle may
give misleading information about a birth-death pair.
This is one advantage of volume optimal cycles compared to optimal cycles.

\begin{figure}[htbp]
  \centering
  \includegraphics[width=0.9\hsize]{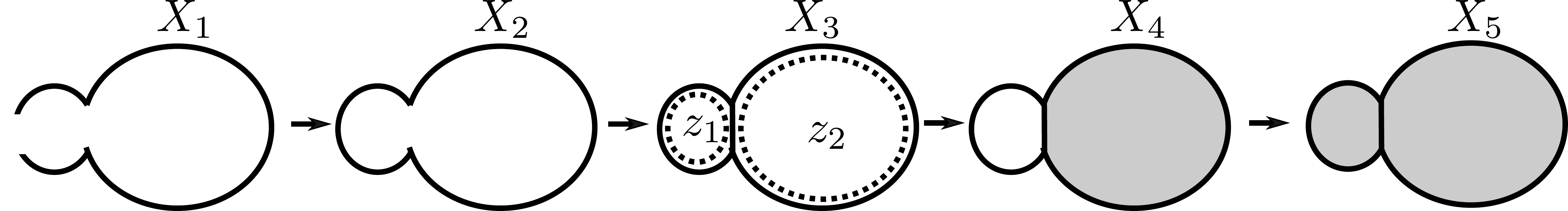}
  \caption{A filtration whose optimal cycle and volume optimal cycle are different.}
  \label{fig:oc-voc-diff-1}
\end{figure}

In Fig.~\ref{fig:oc-voc-diff-2} and Fig.~\ref{fig:oc-voc-diff-3},
optimal cycles and volume optimal cycles are also different.
In Fig.~\ref{fig:oc-voc-diff-2},
the optimal cycle is $z_1$ but the volume optimal cycle $z_2$.
In Fig.~\ref{fig:oc-voc-diff-3},
the optimal cycle for $(3, 4)$ is $z_1$ 
but the volume optimal cycle is $z_1 + z_2$.

\begin{figure}[htbp]
  \centering
  \includegraphics[width=0.3\hsize]{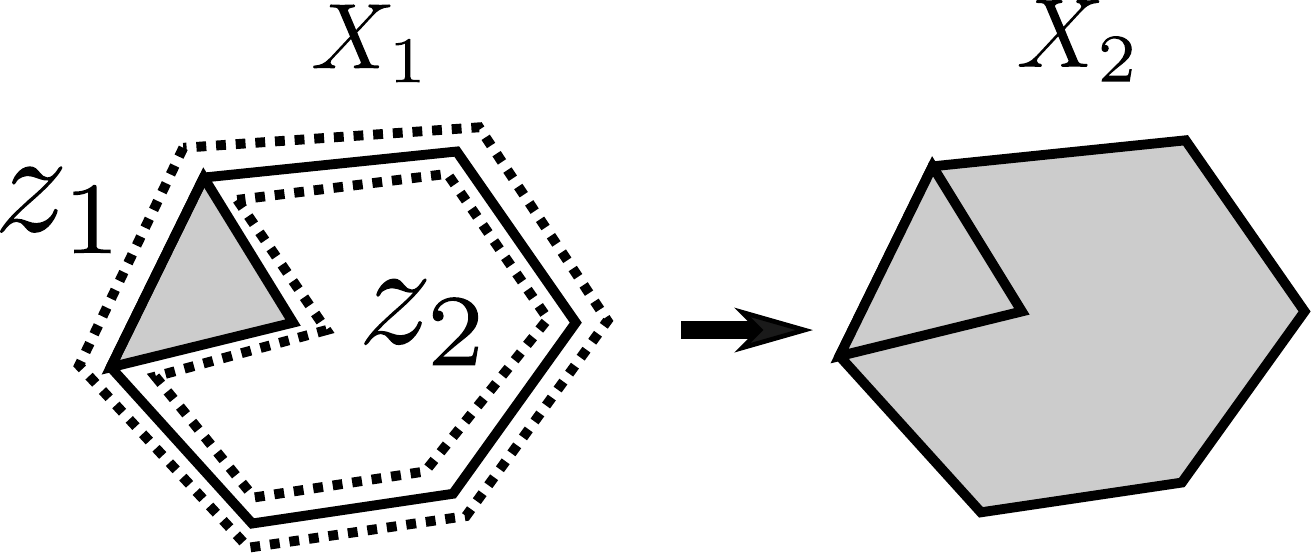}
  \caption{Another filtration whose optimal cycle and volume optimal cycle are different.}
  \label{fig:oc-voc-diff-2}
\end{figure}

\begin{figure}[htbp]
  \centering
  \includegraphics[width=0.7\hsize]{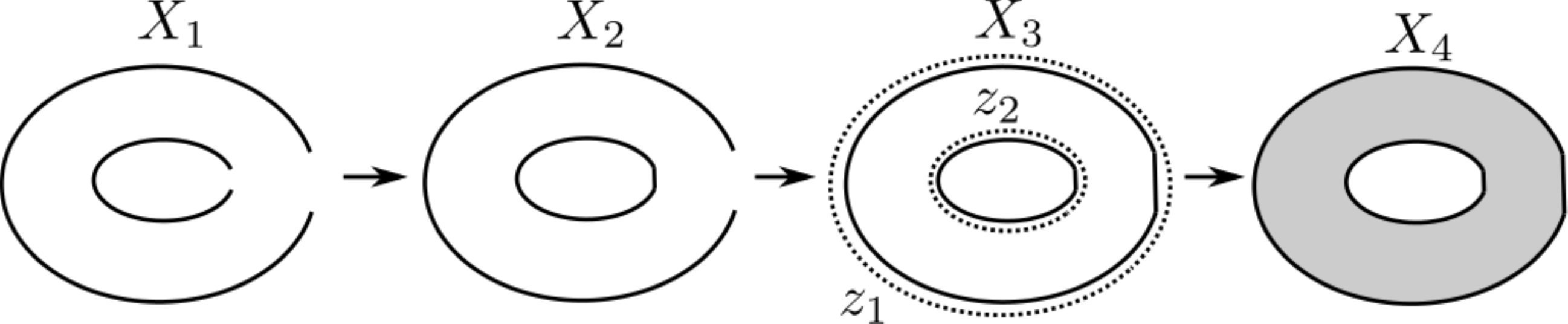}
  \caption{Another filtration whose optimal cycle and volume optimal cycle are different.}
  \label{fig:oc-voc-diff-3}
\end{figure}

In Fig~\ref{fig:no-voc}, the 1st PD is $(2, \infty)$ and we cannot define the volume optimal
cycle but can define the optimal cycle. In general, we cannot define
the volume optimal cycle for a birth-death pair with infinite death time.
If we use an alpha filtration in $\R^n$, such a problem doest not occur because
a Delaunnay triangulation is always acyclic. But if we use another type of a filtration,
we possibly cannot use volume optimal cycles.
That may be a disadvantage of volume optimal cycles if we use a filtration other than
an alpha filtration, such as a Vietoris-Rips filtration.
\begin{figure}[htbp]
  \centering
  \includegraphics[width=0.3\hsize]{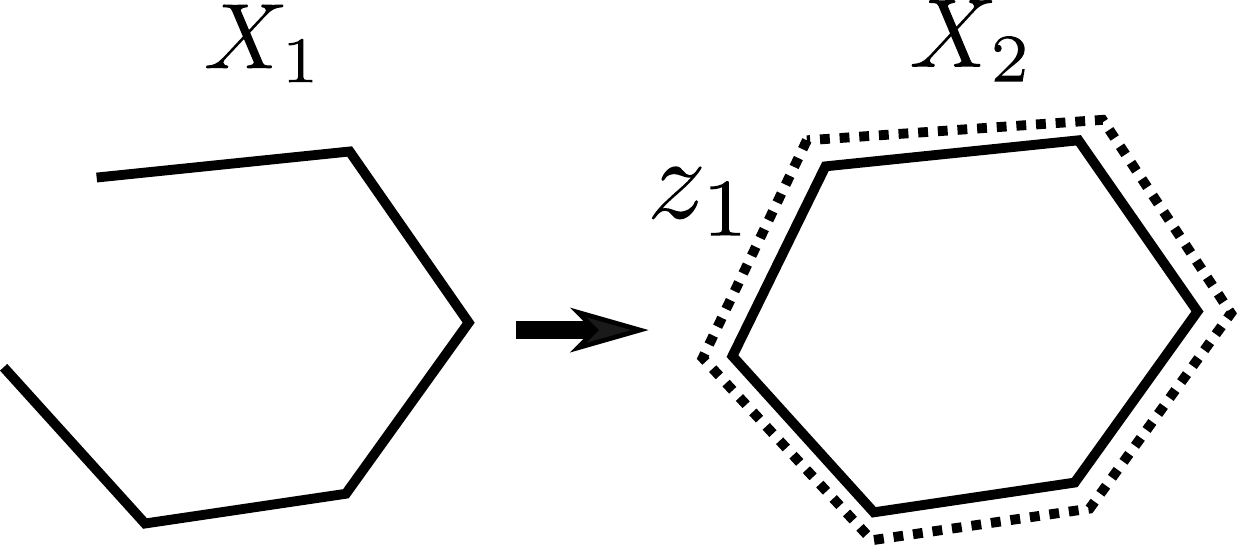}
  \caption{A filtration without a volume optimal cycle.}
  \label{fig:no-voc}
\end{figure}


One more advantage of the volume optimal cycles is
the simplicity of the computation algorithm. For the computation of
the optimal cycles we need to keep track of all persistence cycles but
for the volume optimal cycles we need only birth-death pairs.
Some efficient algorithms implemented in phat and dipha do not keep track of
such data, hence we cannot use such softwares to compute the optimal cycles
without modification.
By contrast we can use such softwares for the computation of
the volume optimal cycles.

\section{Example}\label{sec:example}

In this section, we will show the example results of our algorithm.
In all of these examples, we use alpha or weighted alpha filtrations.

For all of these examples, optimal volumes and volume optimal cycles are computed
on a laptop PC with 1.2 GHz Intel(R) Core(TM) M-5Y71 CPU and 8GB memory on Debian 9.1.
Dipha~\cite{dipha} is used to compute PDs,
CGAL\footnote{\url{http://www.cgal.org/}} is used to compute (weighted) alpha filtrations,
and Clp~\cite{coin} is used to solve the linear programming.
Python is used to write the program and pulp\footnote{\url{https://github.com/coin-or/pulp}} is used for
the interface to Clp from python.
Paraview\footnote{\url{https://www.paraview.org/}} is used to visualize volume optimal cycles.

If you want to use the software, please contact with
us. Homcloud\footnote{\url{http://www.wpi-aimr.tohoku.ac.jp/hiraoka_labo/research-english.html}},
a data analysis software with persistent homology developed
by our laboratory, provides the algorithms shown in this paper.
Homcloud provides the easy access to the volume optimal cycles. We can visualize
the volume optimal cycle of a birth-death pair only by clicking the pair in a PD on
Homcloud's GUI.

\subsection{2-dimensional Torus}

The first example is a 2-dimensional torus in $\R^3$. 2400 points are randomly scattered on
the torus and PDs are computed. Figure~\ref{fig:pd-torus} shows the
1st and 2nd PDs. The 1st PD has two birth-death pairs
$(0.001, 0.072)$ and $(0.001, 0.453)$ and the 2nd PD has
one birth-death pair $(0.008, 0.081)$ far from the diagonal. These birth-death pairs
correspond to generators of $H_1(\mathbb{T}^2) \simeq \Bbbk^2$ and
$H_2(\mathbb{T}^2) \simeq \Bbbk$.

\begin{figure}[tbp]
  \centering
  \includegraphics[width=0.4\hsize]{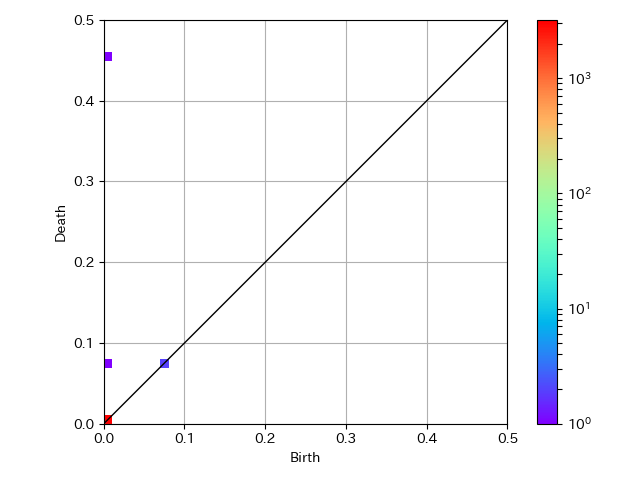}
  \includegraphics[width=0.4\hsize]{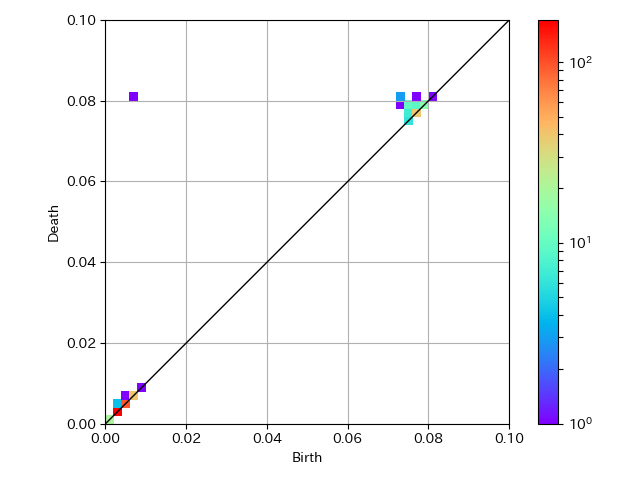}
  \caption{The 1st and 2nd PDs of the point cloud on a torus.}
  \label{fig:pd-torus}
\end{figure}

\begin{figure}[tbp]
  \centering
  \includegraphics[width=0.3\hsize]{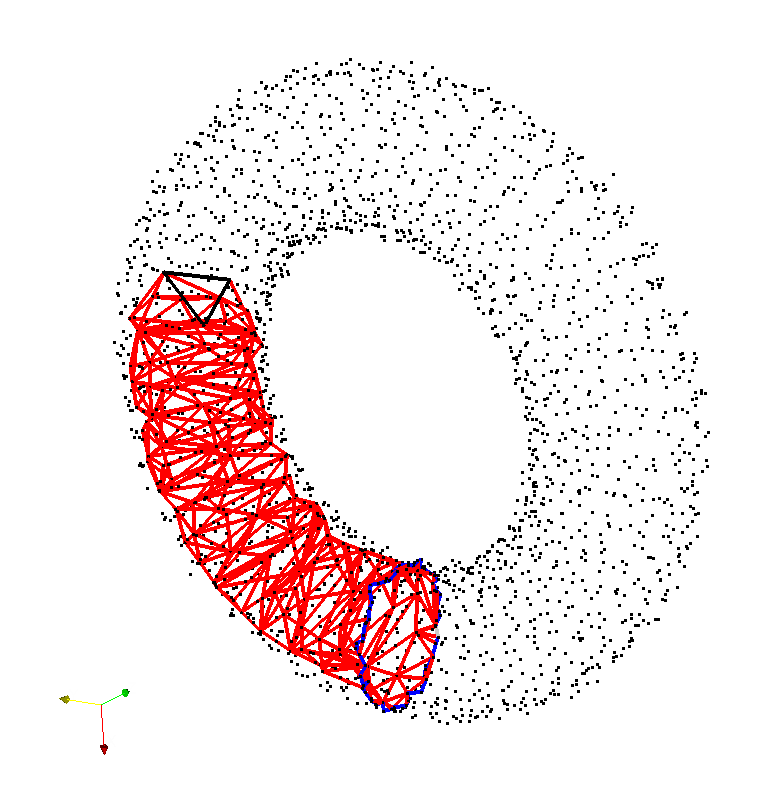}
  \includegraphics[width=0.3\hsize]{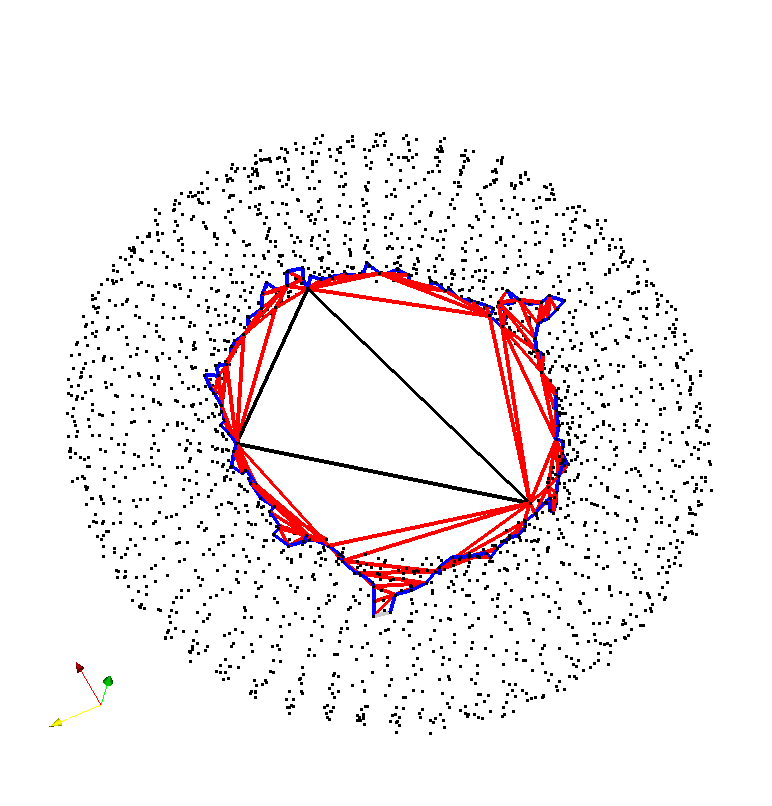}
  \includegraphics[width=0.3\hsize]{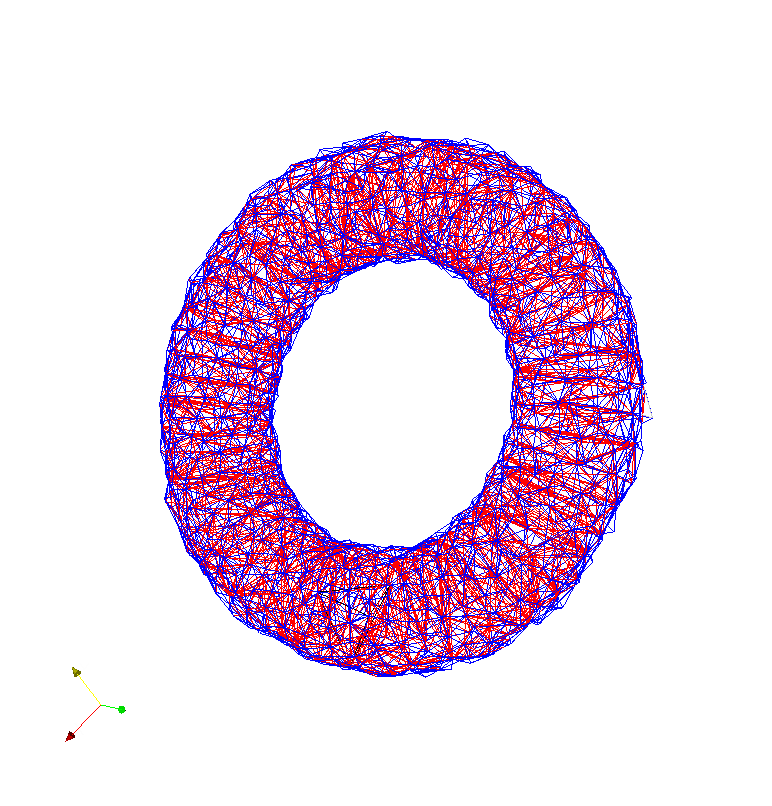}
  \caption{Volume optimal cycles for $(0.001, 0.072)$ and $(0.001, 0.453)$ in $D_1$ and
    $(0.008, 0.081)$ in $D_2$ 
    on the torus point cloud.}
  \label{fig:torus-voc}
\end{figure}

Figure~\ref{fig:torus-voc} shows the volume optimal cycles of these three birth-death pairs
using Algorithm~\ref{alg:volopt}.
Blue lines show volume optimal cycles, red lines show optimal volumes,
black lines show $\sigma_d$ for each birth death pair $(b, d)$ (we call this simplex the \textit{death simplex}). Black dots show the point cloud. By the figure, we understand
how homology generators appear and disappear in the filtration of the
torus point cloud.
The computation times are 25sec, 33sec, and 7sec on our laptop PC.

By using Algorithm~\ref{alg:volopt-hd-compute}, we can also compute volume optimal cycles
in $D_2$. In this example, the computation time by
Algorithm~\ref{alg:volopt-hd-compute} is about 2sec. This is much faster than
Algorithm~\ref{alg:volopt} even if Algorithm~\ref{alg:volopt-hd-compute} computes
\emph{all} volume optimal cycles.

\subsection{Amorphous silica}
\label{sec:example-silica}

In this example, we use the atomic configuration of 
amorphous silica computed by molecular dynamical simulation
as a point cloud and we try to reproduce the result
in \cite{Hiraoka28062016}. In this example, we use weighted alpha
filtration whose weights are the radii of atoms. The number of atoms are
8100, 2700 silicon atoms and 5400 oxygen atoms.

Figure~\ref{fig:amorphous-silica} shows the 1st PD. This diagram
have four characteristic areas $C_P$, $C_T$, $C_O$, and $B_O$.
These areas correspond to
the typical ring structures in the amorphous silica as follows.
Amorphous silica consists of silicon atoms and oxygen atoms and
the network structure is build by covalent bonds between silicons and oxygens.
$C_P$ has rings whose atoms are \ce{$\cdots$ -Si-O-Si-O- $\cdots$ } where
\ce{-} is a covalent bond between a silicon atom and a oxygen atom.
$C_P$ has triangles consisting of \ce{O-Si-O}. 
$C_O$ has triangles consisting of three oxygen atoms appearing alternately
in \ce{$\cdots$-O-Si-O-Si-O-$\cdots$}. 
$B_O$ has many types of ring structures, but one typical ring is a quadrangle 
consists of four oxygen atoms 
appearing alternately
in \ce{$\cdots$-O-Si-O-Si-O-Si-O-$\cdots$}.

Figure~\ref{fig:voc-silica} shows the volume optimal cycles for birth-death pairs
in $C_P, C_t, C_O$ and $B_O$. In this figure oxygen (red) and silicon (blue) atoms
are also shown in addition to volume optimal cycles, optimal volumes,
and death simplices.
We can reproduce the result of \cite{Hiraoka28062016} about ring reconstruction.

\begin{figure}[tbp]
  \centering
  \includegraphics[width=0.24\hsize]{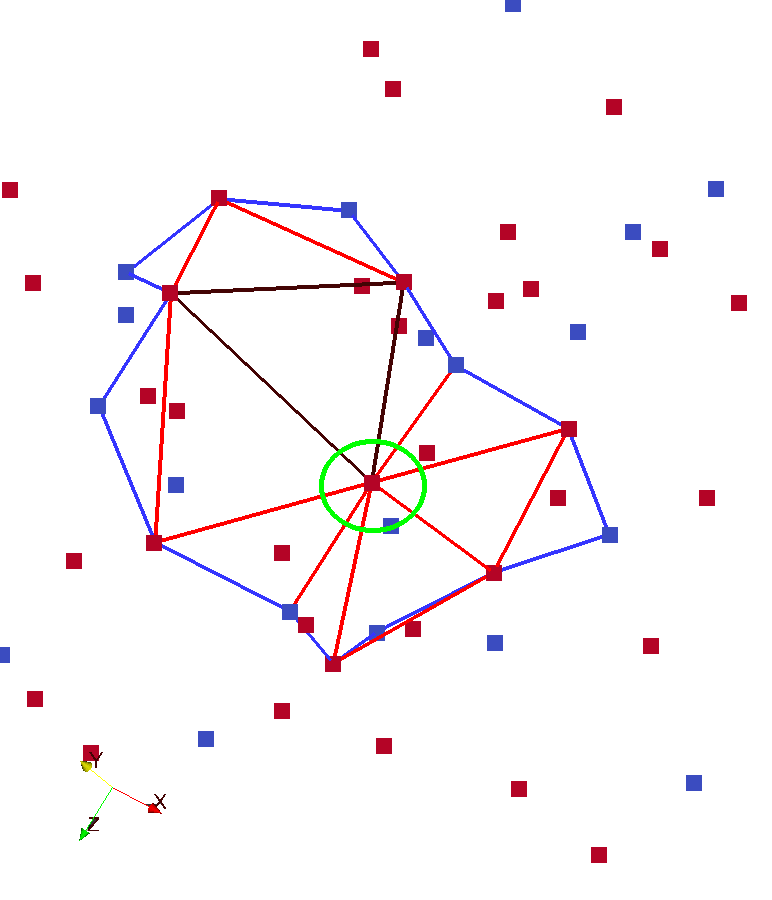}
  \includegraphics[width=0.24\hsize]{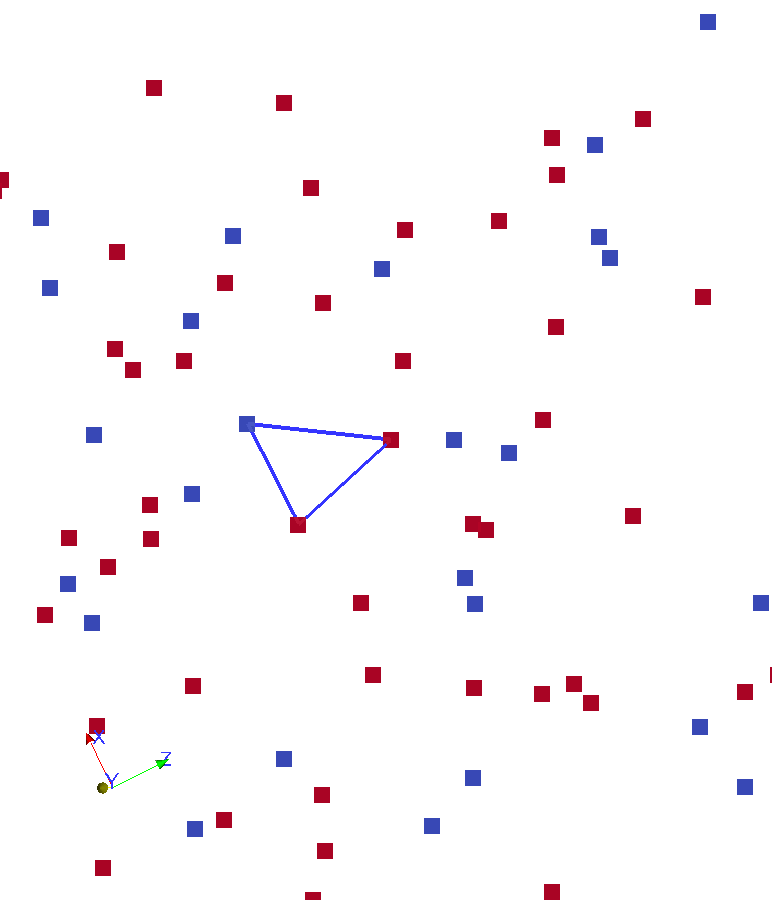}
  \includegraphics[width=0.24\hsize]{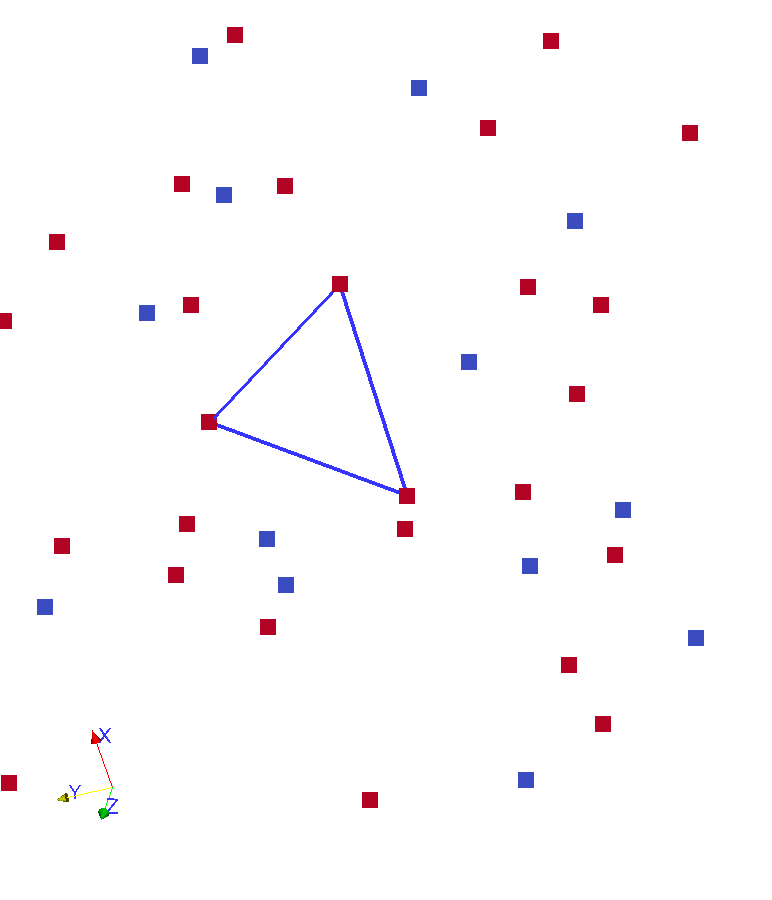}
  \includegraphics[width=0.24\hsize]{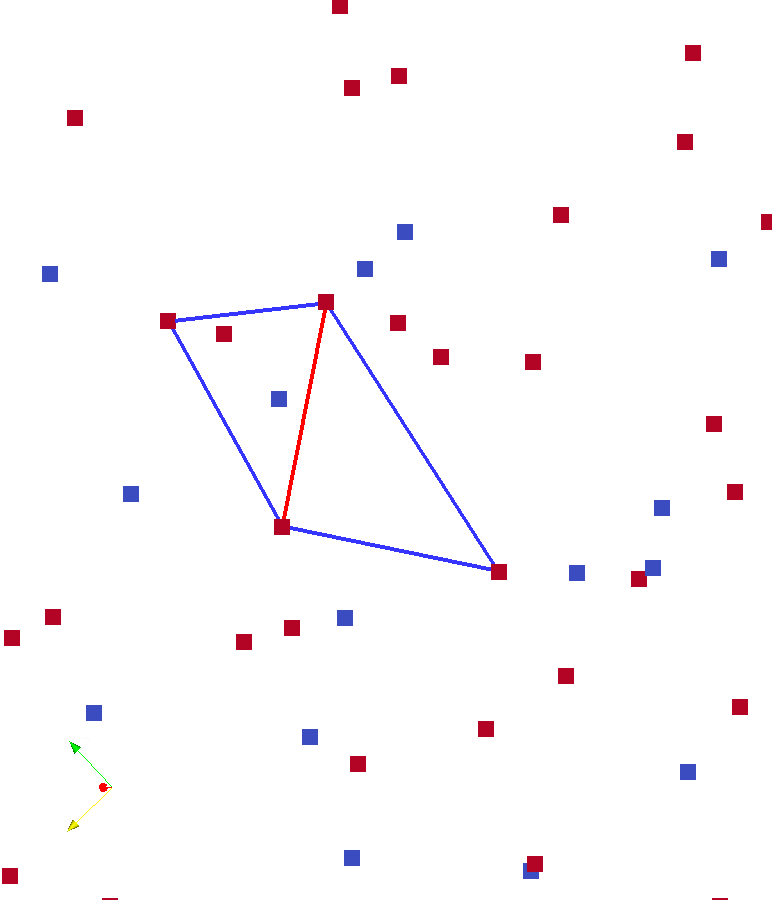}
  \caption{Volume optimal cycles in amorphous silica in $C_P, C_T, C_O$, and $B_O$ (from left to right).}
  \label{fig:voc-silica}
\end{figure}

We also know that the oxygen atom rounded by the green circle in this figure
is important to determine the death time. The death time of this birth-death pair is
determined by the radius of circumcircle of the black triangle (the death simplex),
hence if the oxygen atom moves away, the death time becomes larger.
The oxygen atom is contained in another \ce{$\cdots$ -Si-O-Si-O- $\cdots$}
ring structure around the volume optimal cycle (the blue ring). By the
analysis of the optimal volume, we clarify that such an interaction of covalent bond
rings determines the death times of birth-death pairs
in $C_P$. This analysis is impossible for the optimal cycles, and
the volume optimal cycles enable us to analyze persistence diagrams more deeply.

\begin{figure}[tbp]
  \centering
  \includegraphics[width=0.5\hsize]{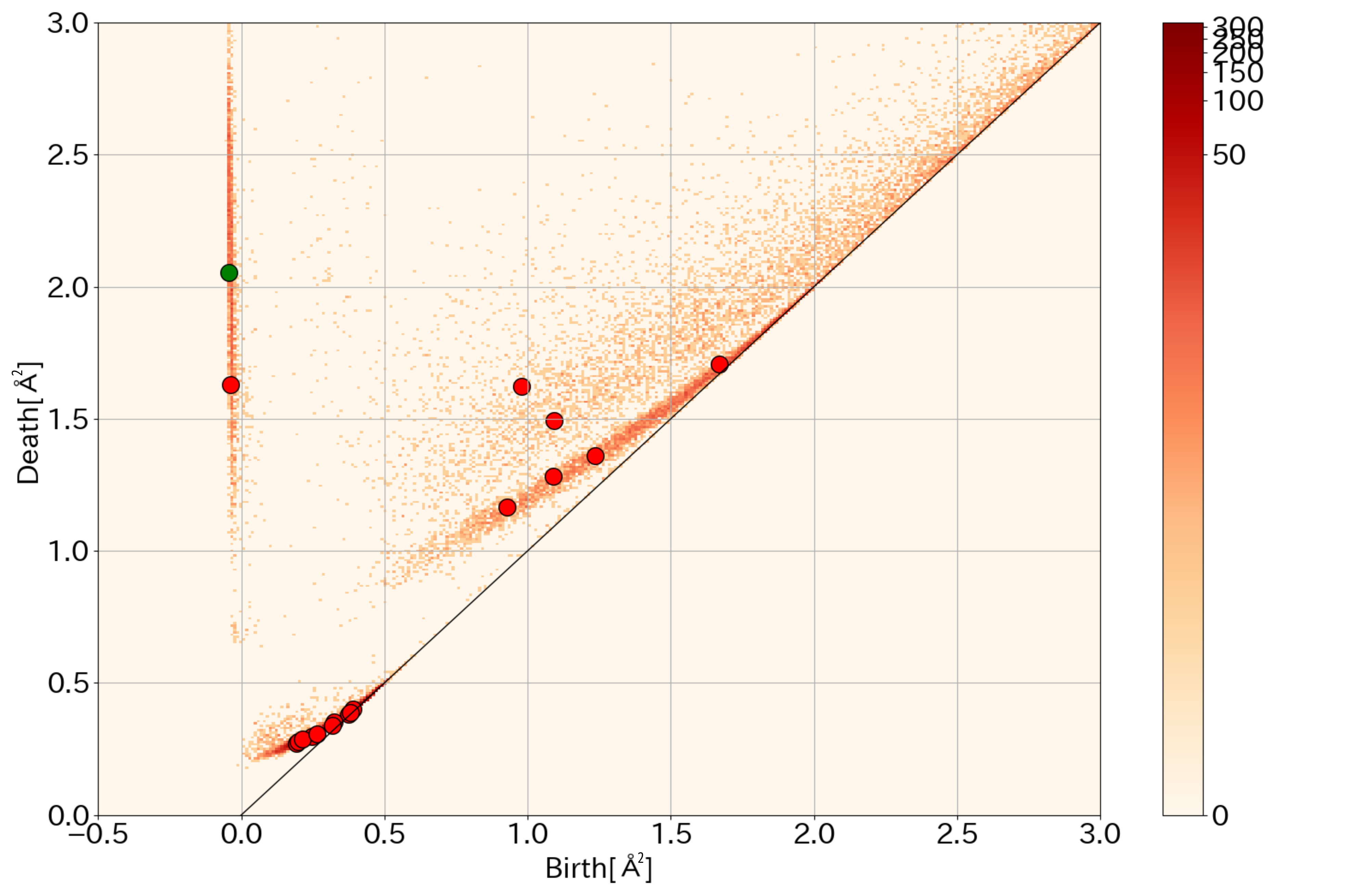}
  \caption{Children birth-death pairs. 
    Red circles are children birth-death pairs of the green birth-death pair.}
  \label{fig:children-bd-pairs}
\end{figure}

Figure~\ref{fig:children-bd-pairs} shows the children birth-death pairs of
the green birth-death pair. The rings corresponding to these children birth-death
pairs are subrings of the large ring corresponding to the green birth-death pair.
This computation result shows that a ring in $C_P$ has subrings in $C_T$, $C_O$,
and $B_O$. The hierarchical structure of these rings
shown in \cite{Hiraoka28062016}. We can easily find such a hierarchical structure
by using our new algorithm.

The computation time is 3 or 4 seconds for each
volume optimal cycle on the laptop PC. The computation time for amorphous silica
is much less than
that for 2-torus even if the number of points in amorphous silica is larger than
that in 2-torus. This is because the locality of
volume optimal cycles works very fine in the  example of amorphous silica.

\subsection{Face centered cubic lattice with defects}

The last example uses the point cloud of face centered cubic (FCC) lattice
with defects. By this example, we show how to use the persistence trees
computed by Algorithm~\ref{alg:volopt-hd-compute}.
The point cloud is prepared by constructing perfect FCC lattice,
adding small Gaussian noise to each point,
and randomly removing points from the point cloud. 

\begin{figure}[thbp]
  \centering
  \includegraphics[width=0.8\hsize]{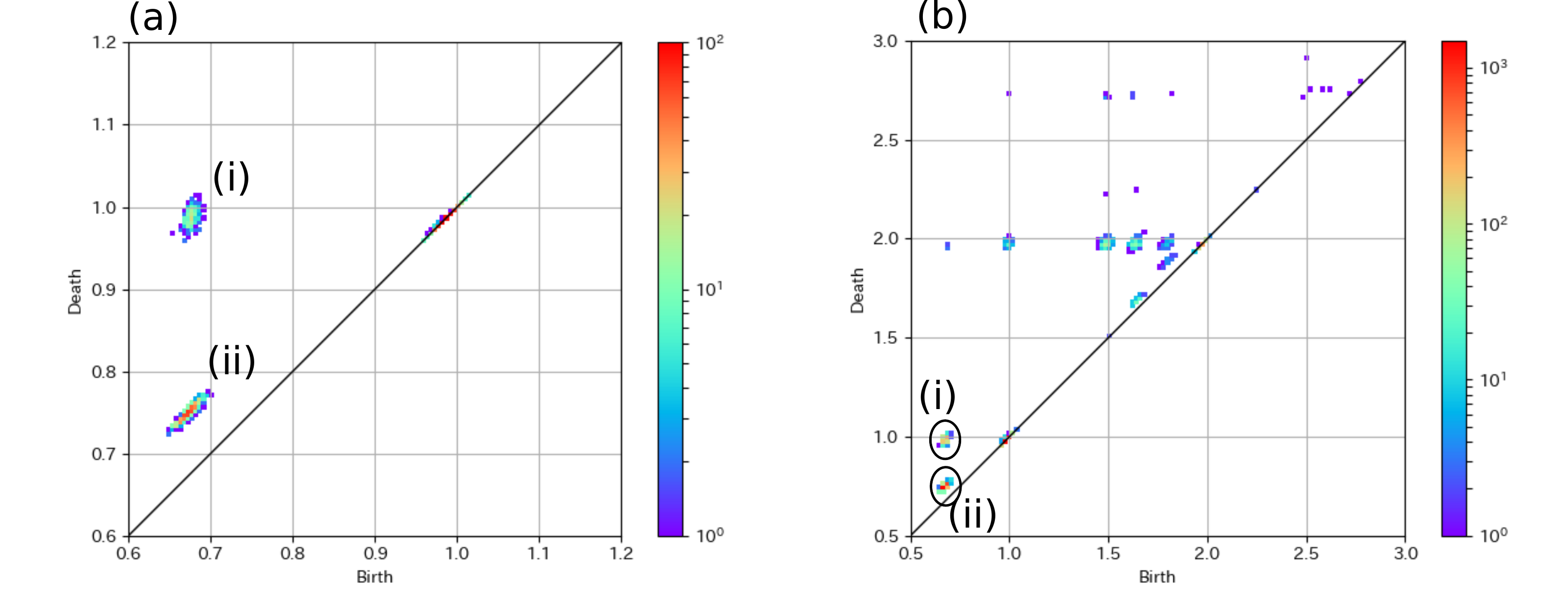}
  \caption{(a) The 2nd PD of the perfect FCC lattice with small Gaussian noise.
    (b) The 2nd PD of the lattice with defects.
  }\label{fig:fcc-pd}
\end{figure}

Figure~\ref{fig:fcc-pd}(a) shows the 2nd PD of FCC lattice with small
Gaussian noise. (i) and (ii) in the figure correspond to
octahedron and tetrahedron cavities in the FCC lattice.
In materials science, these cavities are famous as octahedron sites and tetrahedron sites.
Figure~\ref{fig:fcc-pd}(b) shows the 2nd PD of the lattice with defects.
In the PD, birth-death pairs corresponding to
octahedron and tetrahedron cavities remain ((i) and (ii) in Fig~\ref{fig:fcc-pd}(b)),
but other types of birth-death pairs appear in this PD. These pairs
correspond to other types of cavities generated by removing points from the FCC lattice.

Figure~\ref{fig:fcc-tree-1}(a) shows a tree computed
by Algorithm~\ref{alg:volopt-hd-compute}. Red markers are nodes of the tree,
and lines between two markers are edges of the tree, where
upper left nodes are ancestors and lower right nodes are descendants.
The tree means that the largest cavity corresponding to most upper-left node
has sub cavities corresponding descendant nodes.
Figure~\ref{fig:fcc-tree-1}(b) shows the volume optimal cycle of
the most upper-left node, (c) shows the volume optimal cycles of pairs
in (i), and (d) shows the volume optimal cycles of pairs in (ii).
Using the algorithm,
we can study the hierarchical structures of the 2nd PH.

\begin{figure}[thbp]
  \centering
  \includegraphics[width=0.85\hsize]{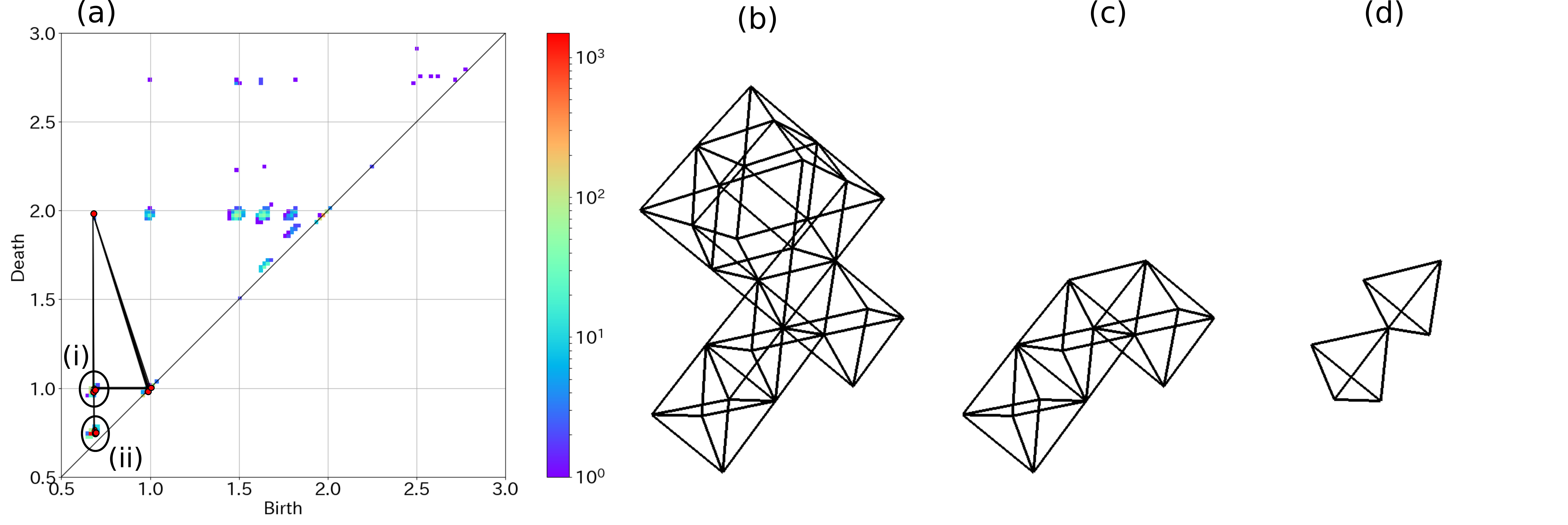}
  \caption{A persistence tree and related volume optimal cycles.
    (a) The persistence tree whose root is $(0.68, 1.98)$.
    (b) The volume optimal cycle of the root pair.
    (c) The volume optimal cycles of birth-death pairs in (i) which are descendants of
    the root pair.
    (d) The volume optimal cycles of birth-death pairs in (ii) which are descendants of
    the root pair.
  }
  \label{fig:fcc-tree-1}
\end{figure}

\subsection{Computation performance comparison with optimal cycles}
\label{subsec:performance}
We compare the computation performance between optimal cycles and volume optimal cycles.
We use OptiPers for the computation of optimal cycles for persistent homology,
which is provided by Dr. Escolar, one of the authors of \cite{Escolar2016}.
OptiPers is written in C++ and our software is mainly written in python,
and python is much slower than C++, so
the comparison is not fair, but suggestive for the readers.

We use two test data.
One test data is the atomic configuration of amorphous silica used in the above example.
The number of points is 8100.
Another data is the partial point cloud of the amorphous silica.
The number of points is 881. We call these data the large data and the small data.
Table~\ref{tab:performance} shows the computation time of 
optimal cycles/volume optimal cycles for all birth-death pairs in the 1st PD
by OptiPers/Homcloud. 

\begin{table}[thbp]
  \centering
  \begin{tabular}{c|cc}
    & optimal cycles (OptiPers) & volume optimal cycles (Homcloud) \\ \hline
    the small data & 1min 17sec & 3min 9sec  \\
    the large data & 5hour 46min & 4hour 13min\\
  \end{tabular}
  \caption{Computation time of optimal cycles and volume optimal cycles on the large/small data.}
  \label{tab:performance}
\end{table}

For the small data, OptiPers is faster than Homcloud, but to the contrary,
for the large data, Homcloud is faster than OptiPers. This is because
the performance improvement technique using the locality of the optimal volume
works fine for the large data, but for the small data the technique is not
so effective and the overhead cost using python is dominant for Homcloud.
This benchmark shows that the volume optimal cycles have an advantage about
the computation time when an input point cloud is large.

\section{Conclusion}\label{sec:conclusion}

In this paper, we propose the idea of volume optimal cycles to identify
good geometric realizations of homology generators appearing in persistent homology.
Optimal cycles are proposed for that purpose in \cite{Escolar2016},
but our method is faster for large data and
gives better information. Especially, we can reasonably compute 
children birth-death pairs
only from a volume optimal cycle. Volume optimal cycles
are already proposed under the limitation of dimension in \cite{voc},
and this paper generalize the idea.

Our idea and algorithm are widely applicable to and
useful for
the analysis of point clouds in $\R^n$ by using the (weighted) alpha filtrations.
Our method gives us intuitive understanding of PDs. In~\cite{PDML}, such inverse analysis
from a PD to its original data is effectively used to study many geometric data
with machine learning on PDs and our method is useful to
the combination of persistent homology and machine learning.

In this paper, we only treat simplicial complex, but our method is
also applicable to a cell filtration and a cubical filtration.
Our algorithms will be useful to study sublevel or superlevel filtrations
given by 2D/3D digital images.

\appendix

\section{Proofs of Section \ref{sec:vochd}}\label{sec:pfvochd}

The theorems shown in this section are a kind of folklore theorems.
Some researchers about persistent homology probably know the fact that the merge-tree
algorithm gives a 0th PD, and the algorithm
is available to compute an $(n-1)$-th PD using Alexander duality,
but we cannot find
the literature for the complete proof.
\cite{voc} stated that the algorithm also gives the tree structure on an
$(n-1)$-th PD, but the thesis does not have the complete proof.
Therefore we will show the proofs here.

Alexander duality says that for any good topological subspace $X$ of $S^n$,
the $(k-1)$-th homology of $X$ and $(n-k)$-th cohomology of $S^n\backslash X$ have
the same information. In this section, we show Alexander duality theorem
on persistent homology.
In this section, we always use $\Zint_2$ as a coefficient of homology and cohomology.

\subsection{Persistent cohomology}
The persistent cohomology is defined on a decreasing sequence
$\Y : Y_0 \supset \cdots \supset Y_K$ of topological spaces.
The cohomology vector spaces and the linear maps induced from
inclusion maps define the sequence
\begin{align*}
  H^q(Y_0) \to \cdots \to H^q(Y_K),
\end{align*}
and this family of maps is called persistent cohomology $H^q(\mathbb{Y})$.
The decomposition theorem also holds for persistent cohomology in the same way as
persistent homology and we define the $q$th cohomologous persistence 
diagram $D^q(\mathbb{Y})$
using the decomposition.

\subsection{Alexander duality}\label{sec:alex}
Before explaining
Alexander duality, we show the following proposition about the dual decomposition.

\begin{prop}\label{prop:dual}
  For any oriented closed $n$-manifold $S$ and its simplicial decomposition $K$,
  there is a decomposition of $M$, $\bar{K}$, satisfying the followings:
  \begin{enumerate}
  \item $\bar{K}$ is a cell complex of $M$.
  \item There is a one-to-one correspondence between $K$ and $\bar{K}$.
    For $\sigma \in K$, we write the corresponding cell in $\bar{K}$ as $\bsigma$.
  \item $\dim \sigma = n - \dim \bsigma$ for any $\sigma \in K$.
  \item If $X \subset K$ a subcomplex of $K$, 
    \begin{align*}
      \bar{X} = \{\bsigma \mid \sigma \not\in X\}
    \end{align*}
    is a subcomplex of $\bar{K}$.
  \item We consider the chain complex of $K$ and $\bar{K}$,
    let $\partial$ and $\bar{\partial}$ be boundary operators on
    those chain complexes, and let $B$ and $\bar{B}$ be matrix representations of
    $\partial$ and $\bar{\partial}$, i.e.
    $\partial \sigma_i = \sum_j B_{ji} \sigma_j$ and
    $\bar{\partial} \bsigma_i = \sum_j \bar{B}_{ji} \bsigma_j$.
    Then $\bar{B}$ is the transpose of $B$.
  \end{enumerate}
\end{prop}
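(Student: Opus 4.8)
The plan is to prove Proposition~\ref{prop:dual} by exhibiting the classical dual block decomposition of a triangulated closed manifold and checking the five listed properties one by one. First I would recall the standard construction via the first barycentric subdivision $\mathrm{sd}\,K$: for each simplex $\sigma \in K$ the dual cell $\bsigma$ is defined as the union of all closed simplices of $\mathrm{sd}\,K$ whose vertices are barycenters $\hat{\tau_0}, \hat{\tau_1}, \ldots$ of a flag $\sigma = \tau_0 \subsetneq \tau_1 \subsetneq \cdots$ starting at $\sigma$. Because $S$ is a closed oriented $n$-manifold, the link of each simplex is a sphere of the appropriate dimension, and a standard argument (see e.g.\ Munkres, \emph{Elements of Algebraic Topology}, \S64 on the dual block complex) shows each $\bsigma$ is a topological cell of dimension $n - \dim\sigma$ and that these cells, together with their faces, form a regular CW structure $\bar{K}$ on $S$. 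This simultaneously gives properties (1), (2) (the correspondence $\sigma \leftrightarrow \bsigma$ is a bijection by construction), and (3).

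Next I would verify property (4): if $X \subset K$ is a subcomplex, then $\bar{X} = \{\bsigma \mid \sigma \notin X\}$ is closed under taking faces in $\bar{K}$. The key incidence fact is that $\bar{\tau}$ is a face of $\bsigma$ in $\bar{K}$ precisely when $\sigma$ is a face of $\tau$ in $K$ (the flags defining $\bar{\tau}$ are sub-flags of those defining $\bsigma$, extended downward). So if $\bsigma \in \bar{X}$, meaning $\sigma \notin X$, and $\bar{\tau}$ is a face of $\bsigma$, then $\sigma \subseteq \tau$; since $X$ is a subcomplex and $\sigma \notin X$, we cannot have $\tau \in X$, hence $\bar{\tau} \in \bar{X}$. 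This shows $\bar{X}$ is a subcomplex of $\bar{K}$.

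Finally, property (5) is the heart of the matter and the step I expect to be the real obstacle, since it requires matching up the cellular boundary maps with signs (or, in our $\Zint_2$ setting, just incidence numbers) and seeing they transpose the simplicial ones. The claim is that the incidence number $[\bsigma_i : \bsigma_j]$ between a $k$-cell and a $(k-1)$-cell of $\bar{K}$ equals the incidence number $[\sigma_j : \sigma_i]$ between the corresponding $(n-k)$-simplex $\sigma_j$ (note the dimension flip means $\sigma_j$ has dimension one \emph{more} than $\sigma_i$) — i.e.\ $\bar{B}_{ji} = B_{ij}$, which is exactly the statement $\bar{B} = B^{\mathsf{T}}$. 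Over $\Zint_2$ this reduces to: $\bsigma_j$ is a codimension-one face of $\bsigma_i$ iff $\sigma_i$ is a codimension-one face of $\sigma_j$, which follows from the incidence fact noted above (with $\dim\sigma_i = \dim\sigma_j - 1$). One must also check that there are no spurious extra incidences — that the boundary of $\bsigma_i$ consists exactly of those $\bsigma_j$ with $\sigma_i \subset \sigma_j$ a codimension-one face — which again comes from the cell structure: $\partial\bsigma_i$ is the union of the dual cells of the simplices immediately containing $\sigma_i$, and over $\Zint_2$ each appears with coefficient $1$. I would remark that an orientation-compatible choice of dual cells makes the identity hold over $\Zint$ as well, with the manifold's fundamental class providing the needed Poincar\'e-duality pairing, but since the rest of the paper works over $\Zint_2$ the sign bookkeeping can be suppressed.
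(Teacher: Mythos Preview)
Your proof is correct and follows the classical textbook construction of the dual block decomposition via the first barycentric subdivision (as in Munkres, \S64). However, you should be aware that the paper itself does not prove this proposition at all: it is stated as a known structural fact about triangulated closed manifolds, with the remark that the Voronoi decomposition dual to a Delaunay triangulation furnishes an example, and is then used as a black box in the subsequent Alexander duality arguments.

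So there is nothing to compare on the level of proof strategy --- you have supplied a proof where the paper supplies none. Your argument for (4) via the face-reversal incidence $\bar{\tau} \leq \bar{\sigma} \Leftrightarrow \sigma \leq \tau$ is clean, and your handling of (5) over $\Zint_2$ (reducing to the same incidence relation in codimension one) is exactly what is needed for the applications in the paper, which work exclusively with $\Zint_2$ coefficients. The remark that orientations would let one upgrade to $\Zint$ is fine but, as you note, unnecessary here.
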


This decomposition $\bar{K}$ is called the \textit{dual decomposition} of $K$.
One example of the dual decomposition is a Voronoi decomposition 
with respect to a Delaunnay triangulation.
Using the dual decomposition, we can define the map $\theta$ from
$C_k(K)$ to $C^{n-k}(\bar{K})$ 
for $k=0,\ldots,n$
as the linear extension of $\sigma_i \mapsto \bsigma_i^*$, where
$\{\sigma_i\}_i$ are $k$-simplices of $K$,
$\{\bsigma_i\}_i$ are corresponding $(n-k)$-cells of $\bar{K}$,
and
$\{\bsigma_i^* \in C^{n-k}(\bar{K})\}_i$ is the dual basis of 
$\{\bsigma_i\}_i$.

The map $\theta$ satisfies the equation
\begin{align}
  \theta \circ \partial = \delta \circ \theta, \label{eq:comm_poincare}
\end{align}
where $\delta$ is the coboundary
operator on $C^*(\bar{K})$ from Proposition \ref{prop:dual}. The map $\theta$ induces the
isomorphism $H_k(K) \simeq H^{n-k}(\bar{K})$, and the isomorphism is called
Poincar\'{e} duality.

Using the dual decomposition, we show Alexander duality theorem.
\begin{theorem}
  For an $n$-sphere $S^n$, its simplicial decomposition $K$, and
  a subcomplex of $X \subset K$, we take a dual decomposition $\bar{K}$ and
  a subcomplex of $\bar{X}$ as in Proposition \ref{prop:dual}. Then,
  \begin{align}
    \tilde{H}_{k-1}(X) \simeq \tilde{H}^{n-k}(\bar{X}) \label{eq:alex_iso}
  \end{align}
  holds for any $k=1,\ldots,n$, where $\tilde{H}$ is the
  reduced (co)-homology.
\end{theorem}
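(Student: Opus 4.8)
The plan is to derive Alexander duality from the combination of Poincar\'{e} duality (the map $\theta$ of the previous paragraph) and the long exact sequence of the pair, together with the excision isomorphism identifying the relative (co)homology of the sphere modulo a subcomplex with that of the complementary subcomplex. First I would set up the dual picture: given the simplicial decomposition $K$ of $S^n$ and the subcomplex $X \subset K$, the proposition provides the dual cell complex $\bar{K}$ and the complementary subcomplex $\bar{X} = \{\bsigma \mid \sigma \notin X\}$. The key structural observation, which follows from item (5) of Proposition~\ref{prop:dual} (transposition of boundary matrices), is that $\theta$ identifies the simplicial chain complex of $K$ \emph{relative to $X$} with the cochain complex of $\bar{X}$ (not of $\bar{K}$): the cells $\bsigma$ with $\sigma \notin X$ are exactly those of $\bar{X}$, so $\theta$ restricts to an isomorphism $C_k(K)/C_k(X) \xrightarrow{\sim} C^{n-k}(\bar{X})$, and equation~\eqref{eq:comm_poincare} shows this is a chain map (up to sign, irrelevant over $\Zint_2$). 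Hence
\begin{align*}
  H_k(K, X) \simeq H^{n-k}(\bar{X}).
\end{align*}

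Next I would invoke excision / the fact that $S^n$ is an $n$-sphere. Since $X$ and the closure of its complement in $S^n$ can be thickened to an open cover (the standard trick: $|\bar{X}|$ deformation retracts onto a neighborhood of $S^n \setminus |X|$, and dually), excision gives $H_k(S^n, X) \simeq H_k(S^n \setminus |X|, \varnothing)$ shifted appropriately — more precisely one uses that $(S^n, X)$ is a good pair and $H_k(S^n, X) \simeq \tilde{H}_{k-1}$ of the complement via the connecting map, but the cleanest route is: combine the above Poincar\'{e}-type identification with the long exact sequence of the pair $(K, X)$,
\begin{align*}
  \cdots \to \tilde{H}_k(S^n) \to H_k(S^n, X) \to \tilde{H}_{k-1}(X) \to \tilde{H}_{k-1}(S^n) \to \cdots,
\end{align*}
and use that $\tilde{H}_j(S^n) = 0$ for $1 \le j \le n-1$ (and a small separate argument for the endpoint degrees $k=1$ and $k=n$, which is why the statement ranges over $k=1,\ldots,n$ and uses reduced (co)homology). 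For $2 \le k \le n-1$ exactness forces $H_k(S^n, X) \simeq \tilde{H}_{k-1}(X)$, and combined with $H_k(K,X) \simeq H^{n-k}(\bar{X})$ — upgraded to reduced cohomology by the same bookkeeping — this yields \eqref{eq:alex_iso}.

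I would then handle the boundary cases separately and carefully, since that is where the reduced-(co)homology convention does its work: for $k=n$ one needs $\tilde{H}^0(\bar{X}) \simeq \tilde{H}_{n-1}(X)$, which again comes from the long exact sequence together with $\tilde{H}_n(S^n) \simeq \Zint_2$ mapping appropriately, and for $k=1$ one needs $\tilde{H}_0(X) \simeq \tilde{H}^{n-1}(\bar{X})$. The main obstacle I anticipate is precisely making the excision step rigorous at the level of a fixed simplicial decomposition and its dual cell decomposition — a priori $S^n \setminus |X|$ is an open set, not a subcomplex, so one must argue that $|\bar{X}|$ is a deformation retract of $S^n \setminus |X|$ (the standard fact that the dual complex of the complement ``fills in'' the complementary region), which is where the geometry of the dual decomposition genuinely enters rather than just its combinatorics. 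Everything else — the algebra of $\theta$, the long exact sequence, vanishing of intermediate $\tilde{H}_*(S^n)$ — is routine, but that retraction, plus the degree bookkeeping at the two ends, is the part deserving real care.
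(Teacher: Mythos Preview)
Your core approach matches the paper's exactly: first establish the chain-level isomorphism $\bar\theta: C_k(K)/C_k(X) \xrightarrow{\sim} C^{n-k}(\bar X)$ (giving $H_k(K,X)\simeq H^{n-k}(\bar X)$), then feed this into the long exact sequence of the pair $(K,X)$ and use the vanishing of $\tilde H_j(S^n)$ in intermediate degrees. The paper does precisely this, treating only $k=n$ separately via the fundamental class and the definition \eqref{eq:cohomology0} of reduced $H^0$.

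Your ``main obstacle'' is not an obstacle, and worrying about it is the one misstep in your write-up. There is no excision step and no deformation-retract argument needed anywhere. You yourself already observed that $\bar\theta$ is a \emph{chain isomorphism} $C_k(K)/C_k(X)\to C^{n-k}(\bar X)$, established purely combinatorially from item~(5) of Proposition~\ref{prop:dual}; this immediately gives $H_k(K,X)\simeq H^{n-k}(\bar X)$ without ever looking at the open set $S^n\setminus|X|$ or its homotopy type. The paper never mentions excision or retractions --- the whole point of working with the dual decomposition is that it replaces that geometric step by linear algebra. Also, $k=1$ does not need separate treatment (for $n\ge 2$ both $\tilde H_0(S^n)$ and $\tilde H_1(S^n)$ vanish, and for $n=1$ the only value is $k=n=1$), so only the case $k=n$ requires the extra care you describe.
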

To apply the duality theorem to persistent homology,
we investigate that isomorphism in detail.

First, we consider the case of $K = X$. In this case, $\bar{X} = \emptyset$
and the homology of $X$ is the same as an $n$-sphere.
Therefore, $\tilde{H}_k(X) = 0$ for any $k=0,\ldots, n-1$
and this is isomorphic to $\tilde{H}^{n-k-1}(\emptyset) = 0$. 

Next, we consider the case that $K \not = X$. In this case, there is a
$n$-simplex of $K$ which is not contained in $X$. We write the $n$-simplex
as $\omega$ and let $K_0$ be $K\backslash\{\omega\}$.

\begin{prop}
  There is the following isomorphism.
  \begin{align}
    H_k(K, X) \simeq H^{n-k}(\bar{X}).
    \label{eq:kx_barx_iso}
  \end{align}
  This isomorphism is induced by:
  \begin{align*}
    \bar{\theta} : C_k(K, X) = C_k(K)/C_k(X) &\to C^{n-k}(\bar{X}) \\
    \sum_{i=s+1}^t a_i \sigma_i + C_k(X) &\mapsto \sum_{i=s+1}^t a_i \bsigma_i^*
  \end{align*}
  where $\{\sigma_1,\ldots,\sigma_t\}$ is all $k$-simplices of $K$ and
  $\{\sigma_1,\ldots,\sigma_s\}$ is all $k$-simplices of $X$.
\end{prop}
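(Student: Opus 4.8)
The plan is to show that $\bar\theta$ is a well-defined isomorphism of (co)chain complexes between the relative chain complex $C_\bullet(K,X)$ computing $H_\bullet(K,X)$ and the cochain complex $C^{n-\bullet}(\bar X)$ computing $H^\bullet(\bar X)$; the asserted isomorphism \eqref{eq:kx_barx_iso} then follows immediately by passing to (co)homology. First I would invoke the chain-level Poincar\'e duality already set up in Section~\ref{sec:alex}: the map $\theta\colon C_k(K)\to C^{n-k}(\bar K)$, $\sigma_i\mapsto\bsigma_i^*$, is a linear isomorphism (it carries a basis to a basis) and satisfies $\theta\circ\partial=\delta\circ\theta$ by \eqref{eq:comm_poincare}, equivalently by part (5) of Proposition~\ref{prop:dual}; over $\Zint_2$ the transpose relation $\bar B=B^{\mathsf T}$ is exactly this intertwining, so no sign bookkeeping intervenes.

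Next I would identify the target. By part (4) of Proposition~\ref{prop:dual}, $\bar X$ is a subcomplex of $\bar K$, and its $(n-k)$-cells are precisely the $\bsigma_i$ with $\sigma_i$ a $k$-simplex of $K$ not lying in $X$. Hence the restriction of cochains $\rho\colon C^{n-k}(\bar K)\to C^{n-k}(\bar X)$ is surjective and its kernel is the span of $\{\bsigma_i^*\mid \sigma_i\in X\}$, which is exactly $\theta\bigl(C_k(X)\bigr)$. Therefore $\rho\circ\theta\colon C_k(K)\to C^{n-k}(\bar X)$ is surjective with kernel $C_k(X)$, so it descends to a vector-space isomorphism $\bar\theta\colon C_k(K)/C_k(X)=C_k(K,X)\to C^{n-k}(\bar X)$, which is precisely the map written in the statement.

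It remains to check that $\bar\theta$ is a chain map. The boundary $\partial$ preserves the subcomplex $C_\bullet(X)$, and the coboundary $\delta$ preserves $\ker\rho$ (the kernel of restriction to a subcomplex is always a sub-cochain-complex); since $\theta$ intertwines $\partial$ and $\delta$, the induced map $\bar\theta$ intertwines the induced differentials on $C_\bullet(K,X)$ and on $C^{n-\bullet}(\bar X)$. Being bijective in each degree, $\bar\theta$ is an isomorphism of (co)chain complexes, hence induces isomorphisms on (co)homology, which gives $H_k(K,X)\simeq H^{n-k}(\bar X)$ as claimed.

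I do not expect a serious obstacle here; the proof is essentially bookkeeping built on top of Proposition~\ref{prop:dual}. The one point needing care is the identification of the $(n-k)$-cells of $\bar X$ with the $k$-simplices outside $X$, and the resulting degreewise equality $\theta(C_\bullet(X))=\ker\rho$ — this is where the hypothesis $K\neq X$ (so that $\bar X$ is nonempty, with $\bsigma=\bar\omega$ a $0$-cell) is implicitly used. Everything else is formal, and working over $\Zint_2$ removes the orientation and sign issues that would otherwise clutter the argument.
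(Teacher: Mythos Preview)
Your proof is correct and follows essentially the same route as the paper: show that $\bar\theta$ is a degreewise linear isomorphism (because the $(n-k)$-cells of $\bar X$ are exactly the duals of the $k$-simplices outside $X$) and that it intertwines the boundary on $C_\bullet(K,X)$ with the coboundary on $C^{n-\bullet}(\bar X)$ via \eqref{eq:comm_poincare}, then pass to (co)homology. Your factorization through the restriction $\rho\colon C^{n-k}(\bar K)\to C^{n-k}(\bar X)$ just makes the paper's terse ``well-defined and isomorphic'' step more explicit; the only minor quibble is that the hypothesis $K\neq X$ is not actually needed for this proposition (if $K=X$ both sides vanish), it is invoked only later for the long exact sequence argument.
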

The map $\theta$ is well-define and isomorphic since
$\{\bsigma_{s+1}, \ldots, \bsigma_{t}\}$ is equal to the set of
all $(n-k)$-simplices of $\bar{X}$. In addition, 
$\delta \circ \bar{\theta} = \bar{\theta} \circ \partial$ holds
where $\partial$ is the boundary operator on $C_*(K, X)$, and
$\delta$ is the coboundary operator on $C^*(\bar{X})$
due to \eqref{eq:comm_poincare}.
Using the map $\bar{\theta}$, the isomorphism 
$\bar{\theta}_* : H_k(K, X) \to H^{n-k}(\bar{X})$ is defined as follows,
\begin{align}
  \left[\sum_{i=s+1}^t a_i \sigma_i + C_k(X)\right] \mapsto 
  \left[\sum_{i=s+1}^t a_i \bsigma_i^*\right]. \label{eq:bar_theta_star}
\end{align}

The next key is the long exact sequence on the pair $(K, X)$. 
\begin{align}
  \cdots \to
  \tilde{H}_k(X) \to
  \tilde{H}_k(K) \xrightarrow{j_*}
  H_k(K, X) \xrightarrow{\partial_*}
  \tilde{H}_{k-1}(X) \to \tilde{H}_{k-1}(K) \to \cdots . \label{eq:long_exact_seq}
\end{align}
The map $\partial_*$ is written as follows
\begin{align}
  \partial_*([z + C_k(X)]) = [\partial z],
  \label{eq:partial_star}
\end{align}
and $j_*$ is induced by the projection map from $C_k(K)$ to $C_k(K, X)$.
If $k \not = n$, since both $\tilde{H}_k(K)$ and $\tilde{H}_{k-1}(K)$ are zero,
The following map is isomorphic due to the long exact sequence \eqref{eq:long_exact_seq}.
\begin{align}
  \partial_* : H_{k}(K, X) \xrightarrow{\sim} H_{k-1}(K). \label{eq:partial_star_iso}
\end{align}
By combining \eqref{eq:kx_barx_iso} and \eqref{eq:partial_star_iso},
we conclude the isomorphism \eqref{eq:kx_barx_iso} for $k \not = n$.
We can explicitly write the isomorphism from $\tilde{H}^{n-k}$
to $\tilde{H}_{k-1}$ as follows using
\eqref{eq:bar_theta_star} and \eqref{eq:partial_star}:
\begin{align}
  \left[\sum_{i=s+1}^t a_i \bsigma_i^*\right] \mapsto 
  \left[\partial\left(\sum_{i=s+1}^t a_i \bsigma_i\right)\right].
\end{align}

When $k=n$, we need to treat the problem more carefully.
From the long exact sequence \eqref{eq:long_exact_seq}, we can show that
the following sequence is exact:
\begin{align}
  \begin{array}{ccccccccc}
    H_n(X)&\to&H_n(K)&\xrightarrow{j_*}&H_n(K, X)&\xrightarrow{\partial_*}&H_{n-1}(X)&\to&H_{n-1}(K) \\
    \veq && \vsimeq & & & & & & \veq \\
    0 && \Zint_2    & & & & & & 0 \\
  \end{array}.
\end{align}

Let $\{\sigma_1, \ldots, \sigma_{t-1}, \sigma_t=\omega\}$ be
$n$-simplices of $K$ and $\{\sigma_1, \ldots, \sigma_s\}$ be
$n$-simplices of $X$. From the assumption of $X \not = K$,
$s < t$ holds.
It is easy to show that $\tau = \sigma_1 + \cdots + \sigma_t$ is
the generator of $Z_n(K)$. From the definition of
the reduced cohomology, 
\begin{align}
 \tilde{H}^0(\bar{X}) = Z^0(\bar{X})/\left<\bar{\tau}\right>,  \label{eq:cohomology0}
\end{align}
 where
$Z^0(\bar{X}) = \ker(\delta: C^0(\bar{X}) \to C^1(\bar{X}))$ and
$\bar{\tau} = \bar{\theta}(j(\tau)) = \bsigma_{s+1}^* + \cdots + \bsigma_t^*$.
For $\Zint_2$ coefficient, the following set is the basis of $Z^0(\bar{X})$
\begin{align}
  \{ \sum_{\bsigma \in C}\bsigma^* \mid C \in \textrm{cc}(\bar{X}) \},
  \label{eq:z0_basis}
\end{align}
where $\textrm{cc}(\bar{X})$ is the connected component decomposition of 0-cells
in $\bar{X}$. Therefore, we can write $\tilde{H}^0(\bar{X})$ as:
\begin{align*}
  \tilde{H}^0(X) = \{ [\sum_{\bsigma \in C}\bsigma^*]
  \mid C \in \textrm{cc}_\omega(\bar{X}) \},
\end{align*}
where $\textrm{cc}_\omega(\bar{X}) \subset \textrm{cc}(\bar{X})$ 
is the set of connected components which do not contain $\bar{\omega}$.
Using the above relations, we can show $\tilde{H}^0(\bar{X}) \simeq H_{n-1}(X)$
whose isomorphism is the linear extension of the following:
\begin{equation}
  \begin{aligned}
    \Theta&: \tilde{H}^0(X) \to H_{n-1}(X) \\ 
    \Theta&([\sum_{\bsigma \in C}\bsigma^*]) =
            [\partial(\sum_{\bsigma \in C}\sigma)]   \\
          &\textrm{for all } C \in \textrm{cc}_\omega(\bar{X}). 
  \end{aligned}
  \label{eq:ccboundary}
\end{equation}

\subsection{Alexander duality and persistent homology}
\label{subsec:alex_ph}

To apply Alexander duality to the persistent homology, we need to
consider the relation between inclusion maps and the isomorphism $\bar{\theta}_*$.
For two subcomplex $X_1 \subset X_2$ of $K$, the following diagram commutes:
\begin{equation}
  \label{eq:alexph_comm}
  \begin{aligned}
    \begin{CD}
      C_\ell(K, X_1) @>\bar{\theta}>> C^{n-\ell}(\bar{X}_1) \\
      @VV{\phi}V                           @VV\bar{\phi}^{\vee}V \\
      C_\ell(K, X_2) @>\bar{\theta}>> C^{n-\ell}(\bar{X}_2), \\
    \end{CD}
  \end{aligned}
\end{equation}
where $\phi$ and $\bar{\phi}^\vee$ are induced from the inclusion maps.
Note that
$X_1 \subset X_2$ induces $\bar{X}_1 \supset \bar{X}_2$ and
$\bar{\phi}^\vee$ is defined from
$C^{n-\ell}(\bar{X}_1)$ to $C^{n-\ell}(\bar{X}_2)$.
Using \eqref{eq:alexph_comm},
we have the following commutative diagram:
\begin{equation}
  \label{eq:alexph_comm2}
  \begin{aligned}
    \begin{CD}
      H_\ell(K, X_1) @>\bar{\theta_*}>> H^{n-\ell}(\bar{X}_1) \\
      @VV{\phi_*}V                           @VV\bar{\phi}^{*}V \\
      H_\ell(K, X_2) @>\bar{\theta_*}>> H^{n-\ell}(\bar{X}_2), \\
    \end{CD}
  \end{aligned}
\end{equation}
We also have the following commutative diagram between two long exact sequences
\begin{equation}
  \label{eq:alexph_comm3}
  \begin{aligned}
    \begin{CD}
      \cdots  @>>> \tilde{H}_k(X_1) @>>> \tilde{H}_k(K) @>j_*>>
      H_{k}(K, X_1) @>\partial_*>>
      \tilde{H}_{k-1}(X_1) @>>>  \cdots \\
      @. @VV{\phi}_*V @| @VV\phi_*V @VV\phi_*V  @. \\
      \cdots  @>>> \tilde{H}_k(X_2) @>>> \tilde{H}_k(K) @>j_*>>
      H_{k}(K, X_2) @>\partial_*>>
      \tilde{H}_{k-1}(X_2) @>>>  \cdots . \\
    \end{CD}
  \end{aligned}
\end{equation}

From \eqref{eq:alexph_comm2}, \eqref{eq:alexph_comm3}, and the discussion
in Section~\ref{sec:alex}, we have the following commutative diagram:
\begin{align*}
  \begin{CD}
    \tilde{H}_{\ell-1}(X_2) @>\sim>> \tilde{H}^{n-\ell}(\bar{X}_2) \\
    @VV\phi_*V                            @VV\bar{\phi}^*V \\
    \tilde{H}_{\ell-1}(X_1) @>\sim>> \tilde{H}^{n-\ell}(\bar{X}_1).
  \end{CD}
\end{align*}
This diagram means that the isomorphism preserves
the decomposition structure of persistent homology and hence
$\tilde{H}_{\ell-1}(\X) \simeq \tilde{H}^{n-\ell}(\bar{\X})$ holds
for $\X: X_0 \subset \cdots \subset X_K$
where $\bar{\X} : \bar{X}_0 \supset \cdots \supset \bar{X}_K$.
\subsection{Alexander duality and a triangulation in $\R^n$}
\label{sec:alex_alpha}

Here, we consider the simplicial filtration in $\R^n$ satisfying
Condition~\ref{cond:rn}. Under the condition,
we need to embed the filtration $\X$
on $\R^n$ into $S^n$ by using one point compactification. We consider a
embedding $|X| \to S^n$ and take $\sigma_{\infty}$ as $S^n\backslash |X|$.
Using the embedding,
we can regard $X \cup \{\sigma_\infty\}$ as a cell decomposition of $S^n$.
The above discussion about Alexander duality on persistent homology
works on this cell complex,
if we properly define the boundary operator and the dual decomposition.
In that case, we regard $\sigma_\infty$ as $\omega$ in the definition of $K_0$.

\subsection{Merge-Tree Algorithm for 0th Persistent Cohomology}
\label{subsec:treemerge-0-pcohom}
The above discussion shows that
only we need to do is to give an algorithm for
computing $0$th persistent cohomology of the dual filtration
In fact, we can efficiently compute 
the $0$th cohomologous persistence diagram
using the following merge-tree algorithm. 

To simplify the explanation of the algorithm, we assume the following condition.
This condition corresponds to Condition~\ref{cond:ph} for persistent homology.
\begin{cond}\label{cond:cohom}\ 
  \begin{itemize}
  \item
    $Y = \{\bsigma_1, \ldots \bsigma_K, \bsigma_\infty \}$ is a cell complex and
    $Y_k = \{\bsigma_{k+1},\ldots, \bsigma_K, \bsigma_\infty\}$ is a subcomplex of $Y$ for any
    $0 \leq k < K$.
  \item $\bsigma_{\infty}$ is 0-cell and $Y_{K} = \{\bsigma_{\infty}\}$ is also a
    subcomplex of $Y$.
  \item $Y$ is connected.
  \end{itemize}
\end{cond}

Under the condition, we explain the algorithm to compute
the decomposition of 0th persistence cohomology on the
decreasing filtration
$\mathbb{Y}:Y = Y_0 \supset \cdots \supset Y_{K} = \{\bsigma_\infty\}$. 

Algorithm~\ref{alg:tree-0-compute} computes the 0th cohomologous persistence diagram.
In this algorithm, $(V_k, E_k)$ is a graph whose nodes are 0-cells of $Y$ and
whose edges have extra data in $\Zint$.
Later we show this algorithm is applicable for
computing $D_{n-1}(\X)$ using Alexander duality.

\begin{algorithm}[h!]
  \caption{Merge-Tree algorithm for the 0th cohomologous PD}\label{alg:tree-0-compute}
  \begin{algorithmic}
    \Procedure{Compute-Tree}{$\mathbb{Y}$}
    \State initialize $V_{K} = \{\bsigma_\infty\}$ and $E_{K} = \emptyset$
    \For{$k=K,\ldots,1$}
      \If{$\bsigma_k$ is a $0$-simplex}
        \State $V_{k-1} \gets V_{k} \cup \{\bsigma_k\},\ E_{k-1} \gets E_{k}$
      \ElsIf{$\bsigma_k$ i a $1$-simplex}
        \State let $\bsigma_s, \bsigma_t$ are two endpoints of $\bsigma_k$
        \State $\bsigma_{s'} \gets \textproc{Root}(\bsigma_s, V_{k}, E_{k})$
        \State $\bsigma_{t'} \gets \textproc{Root}(\bsigma_t, V_{k}, E_{k})$
        \If{$s'=t'$}
          \State $V_{k-1} \gets V_{k},\ E_{k-1} \gets E_{k}$
        \ElsIf{$s'> t'$}
        \State $V_{k-1} \gets V_{k},\ 
        E_{k-1} \gets E_{k}\cup \{(\bsigma_{t'} \xrightarrow{k} \bsigma_{s'})\}$
        \Else
        \State $V_{k-1} \gets V_{k},\ 
        E_{k-1} \gets E_{k}\cup \{(\bsigma_{s'} \xrightarrow{k} \bsigma_{t'})\}$
        \EndIf
      \Else
        \State $V_{k-1} \gets V_{k},\  E_{k-1} \gets E_{k}$
      \EndIf
    \EndFor
    \Return $(V_0, E_0)$
    \EndProcedure
  \end{algorithmic}
\end{algorithm}

This algorithm tracks all $\{(V_k, E_k)\}_{k=0,\ldots,K}$ for the mathematical
proof, but when you implement the algorithm, you do not need to keep the history
and you can directly update the set of nodes and edges.

\begin{theorem}\label{thm:tree0}
  The 0th reduced cohomologous persistence diagram $\tilde{D}^0(\mathbb{Y})$
  is given as follows:
  \begin{align*}
    \tilde{D}^0(\mathbb{Y}) = \{(k, s) \mid (\bsigma_s \xrightarrow{k} \bsigma_t) \in E_0\}
  \end{align*}
\end{theorem}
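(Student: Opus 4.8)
The plan is to recognize Algorithm~\ref{alg:tree-0-compute} as the classical union--find computation of $0$th persistent \emph{homology} and to transport it to $\tilde D^0(\mathbb{Y})$ through the vector-space duality between persistent homology and cohomology over $\Zint_2$. First I would reverse the filtration: set $W_j:=Y_{K-j}$ for $j=0,\dots,K$, so that $\mathbb{W}\colon\{\bsigma_\infty\}=W_0\subset W_1\subset\cdots\subset W_K=Y$ is an increasing filtration with $W_j=W_{j-1}\cup\{\bsigma_{K-j+1}\}$, inserting the cells $\bsigma_K,\dots,\bsigma_1$ in exactly the order in which the main loop of Algorithm~\ref{alg:tree-0-compute} visits them. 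Since the number of cells is finite and $\Zint_2$ is a field, universal coefficients gives natural isomorphisms $\tilde H^0(Y_j)\cong\mathrm{Hom}(\tilde H_0(Y_j),\Zint_2)$ under which each restriction map $\tilde H^0(Y_{j-1})\to\tilde H^0(Y_j)$ is the transpose of the inclusion-induced map $\tilde H_0(Y_j)\to\tilde H_0(Y_{j-1})$. Hence $\tilde H^0(\mathbb{Y})$ is the dual of the persistent homology of $\mathbb{W}$ read backwards; applying the decomposition theorem to both and using that the dual of an interval module over a finite linear poset is, after reindexing, again an interval module, one gets the bookkeeping identity
\[
  (k,s)\in\tilde D^0(\mathbb{Y})\iff(K-s+1,\,K-k+1)\in\tilde D_0(\mathbb{W}),
\]
i.e.\ a class alive in $\tilde H^0(Y_j)$ exactly for $k\le j<s$ corresponds to a class alive in $\tilde H_0(W_j)$ exactly for $K-s+1\le j<K-k+1$. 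The reduced normalization is pinned down by the always-present component of $\bsigma_\infty$, which is why Condition~\ref{cond:cohom} makes $W_0$ the single $0$-cell $\bsigma_\infty$ and why $\tilde D^0(\mathbb{Y})$ has no pair with infinite death.

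Next I would check that Algorithm~\ref{alg:tree-0-compute} is the standard union--find computation of $\tilde D_0(\mathbb{W})$. When the cell $\bsigma_{K-j+1}$ inserted at step $j$ is a $0$-cell, every incident $1$-cell is a face of a cell lying in a subcomplex and so, by Condition~\ref{cond:cohom}, has strictly smaller cell index and is inserted strictly later in $\mathbb{W}$; hence $\bsigma_{K-j+1}$ is isolated in $W_j$, starting a new component and creating one new $\tilde H_0$-generator born at step $j$ (the first such addition being absorbed by the reduced normalization at $\bsigma_\infty$). When the inserted cell is a $1$-cell it either stays inside one component or merges two components, in which case the elder rule kills the generator carried by the younger of the two; Algorithm~\ref{alg:tree-0-compute} records this by adding the edge from the root of the younger (smaller-index) component to the root of the older one, with the \textproc{Root} procedure maintaining these representatives exactly as in the union--find data structure. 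Cells of dimension $\ge 2$ change neither $\tilde H_0$ nor $(V,E)$. This is precisely the textbook union--find computation of the $0$th persistence diagram of an increasing filtration, so $\tilde D_0(\mathbb{W})$ is read off from $E_0$.

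Finally I would translate indices. An edge $(\bsigma_s\xrightarrow{k}\bsigma_t)\in E_0$ is created while processing the $1$-cell $\bsigma_k$, which merges the component with root $\bsigma_s$ into an older one; a component's root is its highest-index $0$-cell, hence the $0$-cell at which that component first appeared in $\mathbb{W}$, namely at step $K-s+1$, while the merge happens at step $K-k+1$. Thus this edge encodes exactly the interval $(K-s+1,\,K-k+1)\in\tilde D_0(\mathbb{W})$, which by the identity above corresponds to $(k,s)\in\tilde D^0(\mathbb{Y})$; conversely every element of $\tilde D^0(\mathbb{Y})$ arises this way, proving the theorem. The main obstacle is the duality step: keeping the reduced-cohomology normalization and the endpoints of the interval reversal straight, so the pair lands on $(k,s)$ and not on a transposed or off-by-one variant; once that dictionary is fixed, the remaining steps are a routine rerun of the standard union--find analysis.
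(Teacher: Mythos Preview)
Your argument is correct, and it takes a genuinely different route from the paper. The paper works \emph{directly in cohomology}: it builds, for each root $\bsigma_s$ of the forest $(V_k,E_k)$, an explicit cocycle $\hat{y}_s^{(k)}=\sum_{\bsigma_t\in\odesc_k(\bsigma_s)}\bsigma_t^*$, proves that these form a basis of $Z^0(Y_k)=H^0(Y_k)$ (Lemma~\ref{lem:pcohom-basis}), that the restriction maps send $\hat{y}_s^{(k)}\mapsto\hat{y}_s^{(k+1)}$ (Lemma~\ref{lem:include-cohom}), and then reads off the birth and death of each $\hat{y}_s$ from the tree (Lemmas~\ref{lem:pcohom-birthdeath} and~\ref{lem:cohom-reduced}). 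In other words, the paper exhibits the interval decomposition of $\tilde{H}^0(\mathbb{Y})$ by hand. You instead reverse the filtration, invoke the natural isomorphism $\tilde{H}^0(Y_j)\cong\mathrm{Hom}(\tilde{H}_0(Y_j),\Zint_2)$ to identify $\tilde{H}^0(\mathbb{Y})$ with the pointwise dual of $\tilde{H}_0(\mathbb{W})$, observe that over a field dualizing an interval module only reverses the indexing, and then recognize Algorithm~\ref{alg:tree-0-compute} as the standard elder-rule union--find computation of $\tilde{D}_0(\mathbb{W})$. Your approach is shorter and more conceptual, and the index bookkeeping you carry out is right. The trade-off is that the paper's proof is fully self-contained (no appeal to universal coefficients, to the duality of persistence modules, or to an external correctness proof of union--find persistence) and, more importantly, it manufactures the explicit cocycles $\hat{y}_s^{(k)}$ and $y_s^{(k)}$ that are then reused verbatim in the proofs of Theorems~\ref{thm:vochd-unique} and~\ref{thm:vochd-alg}(ii) via the Alexander map $\Theta$. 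Your argument proves the present theorem but does not produce those cocycles, so if you continue to the later statements you would either have to construct them separately or rephrase those proofs in the dual homological picture.
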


To prove the theorem and justify the algorithm, we show some basic facts
about the graph $(V_K, E_k)$ given by the algorithm.
These facts are shown by checking
the edges/nodes adding rule of each step in Algorithm~\ref{alg:tree-0-compute}.

\begin{fact}\label{fact:vk}
  $V_k = \{\bsigma_\ell : \text{0-simplex in } Y \mid k < \ell\}$
\end{fact}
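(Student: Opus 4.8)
The plan is to prove the identity by downward induction on $k$, from $k = K$ down to $k = 0$, at each step simply reading off the rule by which Algorithm~\ref{alg:tree-0-compute} updates the node set. First I would fix the convention that the index of $\bsigma_\infty$ counts as larger than $K$, so that $\bsigma_\infty$ lies in $\{\bsigma_\ell : \text{0-simplex in } Y \mid k < \ell\}$ for every $k$; with this convention the base case $k = K$ is immediate, since the algorithm initializes $V_K = \{\bsigma_\infty\}$ and no cell other than $\bsigma_\infty$ has index exceeding $K$.

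For the inductive step I would assume $V_k = \{\bsigma_\ell : \text{0-simplex in } Y \mid k < \ell\}$ and check that $V_{k-1}$ differs from $V_k$ exactly by the insertion of $\bsigma_k$ when $\bsigma_k$ is a $0$-cell, and coincides with $V_k$ otherwise. This is a finite case split over the branches of the inner conditional: the first branch (when $\bsigma_k$ is a $0$-simplex) sets $V_{k-1} = V_k \cup \{\bsigma_k\}$, which is precisely $\{\bsigma_\ell : \text{0-simplex in } Y \mid k - 1 < \ell\}$; every other branch --- $\bsigma_k$ a $1$-simplex in any of its three sub-cases, or $\bsigma_k$ of dimension at least $2$ --- leaves $V$ unchanged, and then $\{\bsigma_\ell : \text{0-simplex} \mid k < \ell\}$ and $\{\bsigma_\ell : \text{0-simplex} \mid k-1 < \ell\}$ agree because $\bsigma_k$ is not a $0$-cell.

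I do not expect a genuine obstacle here: the statement is pure bookkeeping about which cells have been inserted into $V$, and its whole content is the observation that the only line of the pseudocode touching $V$ (apart from the trivial reassignments $V_{k-1} \gets V_k$) is the one triggered by a $0$-cell. The only mild subtlety is making the convention on the index of $\bsigma_\infty$ explicit, so that the boundary case $k = K$ reads correctly; everything else follows by inspection of the algorithm.
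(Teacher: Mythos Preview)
Your proposal is correct and is essentially the same approach as the paper, which simply states that the fact is ``obvious from the algorithm'' without further detail; you have merely spelled out the induction that this phrase implicitly invokes. The one point you flag --- the convention that the index of $\bsigma_\infty$ is treated as larger than every finite index --- is indeed used tacitly throughout the paper (e.g.\ in Fact~\ref{fact:tree_order} and Lemma~\ref{lem:cohom-reduced}), so making it explicit here is appropriate.
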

Fact~\ref{fact:vk} is obvious from the algorithm.

\begin{fact}\label{fact:graph-is-tree}
  For any $k$, $(V_k, E_k)$ is a forest, i.e. a set of trees. 
  That is, the followings hold:
  \begin{itemize}
  \item There is no loop in the graph
  \item For any node, the number of outgoing edges from the node is zero or one.
    \begin{itemize}
    \item If the number is zero, the node is a root node
    \item If the number is one, the node is a child node
    \end{itemize}
  \end{itemize}
\end{fact}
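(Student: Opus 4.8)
The plan is to argue by induction, processing the loop index $k = K, K-1, \dots, 1$ of Algorithm~\ref{alg:tree-0-compute} in turn and checking that each iteration preserves the two asserted properties. The base case is the initialization $(V_K, E_K) = (\{\bsigma_\infty\}, \emptyset)$: a single vertex with no edges is trivially a forest, and its unique vertex has zero outgoing edges, hence is a root. It is convenient to record first a general observation: in any graph with no cycle in which every vertex has at most one outgoing edge, starting from any vertex and repeatedly following the (unique, when it exists) outgoing edge produces a path that cannot revisit a vertex and therefore terminates at a vertex with no outgoing edge. Consequently \textproc{Root} is well defined on such a graph, it returns for each vertex the unique root of the tree containing that vertex, and \textproc{Root}$(\bsigma_s)$ and \textproc{Root}$(\bsigma_t)$ coincide if and only if $\bsigma_s$ and $\bsigma_t$ lie in the same connected component.

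For the inductive step, suppose $(V_k, E_k)$ is a forest with every vertex of out-degree $0$ or $1$, and split into cases according to $\bsigma_k$, following the branches of the algorithm. If $\bsigma_k$ is a $0$-simplex, then $(V_{k-1}, E_{k-1})$ is obtained by adjoining the isolated vertex $\bsigma_k$ and no edge; this keeps the graph a forest, makes $\bsigma_k$ a root, and changes nothing else. If $\dim \bsigma_k \geq 2$, the graph is unchanged and there is nothing to check. The remaining case is $\bsigma_k$ a $1$-simplex with endpoints $\bsigma_s, \bsigma_t$; these are $0$-cells, and being faces of $\bsigma_k$ they already belong to $V_k$ by Fact~\ref{fact:vk} (with $\bsigma_\infty \in V_k$ throughout), so \textproc{Root} may be invoked on each. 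If the two returned roots agree, no edge is added and the invariants are trivially preserved; otherwise, calling the two distinct roots $\bsigma_{s'}, \bsigma_{t'}$, the algorithm adds exactly one edge, from whichever of them has the larger index to the other, say $(\bsigma_{t'} \xrightarrow{k} \bsigma_{s'})$ (the branch $s' < t'$ being symmetric with the roles of $s$ and $t$ exchanged).

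I expect this last subcase to be the only real point, and it is still routine. Two things must be verified. For the out-degree property: $\bsigma_{t'}$, being a root of $(V_k, E_k)$, had no outgoing edge, so after adjoining $(\bsigma_{t'} \xrightarrow{k} \bsigma_{s'})$ it has exactly one; $\bsigma_{s'}$ only acquires an incoming edge; all other out-degrees are unchanged. For acyclicity: by the preliminary observation, $\bsigma_{s'} \neq \bsigma_{t'}$ being distinct roots means $\bsigma_s$ and $\bsigma_t$ lie in different trees of $(V_k, E_k)$, and adjoining a single edge between two distinct trees of a forest merges them into one tree without creating a cycle. Hence $(V_{k-1}, E_{k-1})$ is again a forest in which every vertex is a root (out-degree $0$) or a child (out-degree $1$), which completes the induction.
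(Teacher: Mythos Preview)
Your proof is correct and follows exactly the approach the paper indicates: induction on $k$, with the key observation that any new edge is added between two roots of the current forest. The paper's own proof is just the one-sentence remark ``We can inductively prove Fact~\ref{fact:graph-is-tree} since an edge is added between two roots of $(V_{k}, E_{k})$ in the algorithm,'' and your write-up simply fills in the routine case analysis and the well-definedness of \textproc{Root} that this sentence leaves implicit.
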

We can inductively prove Fact~\ref{fact:graph-is-tree}
since an edge is added between two roots of $(V_{k}, E_{k})$ in the algorithm.

\begin{fact}\label{fact:graph-cc}
  The topological connectivity of $Y_k$ is the same as
  $(V_k, T_k)$. That is,
  $\{\bsigma_{i_1}, \ldots, \bsigma_{i_\ell}\}$ is all 0-simplices
  of a connected component in $X_k$ if and only if
  there is a tree in $(V_k, E_k)$ whose nodes are
  $\{\bsigma_{i_1}, \ldots, \bsigma_{i_\ell}\}$.
\end{fact}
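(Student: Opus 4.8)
The plan is to prove Fact~\ref{fact:graph-cc} by downward induction on $k$, exactly mirroring the loop of Algorithm~\ref{alg:tree-0-compute}, which processes $\bsigma_K, \ldots, \bsigma_1$ in that order. The base case is $k = K$: here $Y_K = \{\bsigma_\infty\}$ consists of a single $0$-cell and $(V_K, E_K) = (\{\bsigma_\infty\}, \emptyset)$, so the unique connected component of $Y_K$ corresponds to the unique one-node tree of the graph, and the claim is immediate.

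For the inductive step, I would assume the statement holds for $(V_k, E_k)$ and $Y_k$ and pass to $Y_{k-1} = Y_k \cup \{\bsigma_k\}$ (by Condition~\ref{cond:cohom}). The argument splits according to $\dim \bsigma_k$, matching the branches of the algorithm. If $\bsigma_k$ is a $0$-cell, then, since $Y_{k-1}$ is a subcomplex, $\bsigma_k$ is attached as an isolated vertex, so the set of components of $Y_{k-1}$ is that of $Y_k$ together with the singleton $\{\bsigma_k\}$; the algorithm performs $V_{k-1} = V_k \cup \{\bsigma_k\}$, $E_{k-1} = E_k$, adding an isolated node, so the correspondence is preserved (Fact~\ref{fact:vk} records which $0$-cells are present at each stage). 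If $\dim \bsigma_k \geq 2$, then the boundary of $\bsigma_k$ already lies in $Y_k$ and is connected there, so attaching $\bsigma_k$ does not change the partition into connected components, while the algorithm leaves $(V_k, E_k)$ untouched; done.

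The substantive case is $\dim \bsigma_k = 1$. Its two endpoints $\bsigma_s, \bsigma_t$ already belong to $Y_k$, because $Y_{k-1}$ is a subcomplex and every proper face of $\bsigma_k$ therefore carries an index $> k$. Attaching the edge $\bsigma_k$ either merges the two components of $Y_k$ containing $\bsigma_s$ and $\bsigma_t$ (when they are distinct) or leaves the partition unchanged (when they already coincide). On the algorithmic side I would invoke Fact~\ref{fact:graph-is-tree}: each connected component of $(V_k, E_k)$ is a tree with a unique root, so $\textproc{Root}(\bsigma_s, V_k, E_k)$ and $\textproc{Root}(\bsigma_t, V_k, E_k)$ return the roots of the trees containing $\bsigma_s$ and $\bsigma_t$, which by the inductive hypothesis are precisely the trees representing the two topological components. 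When these roots agree ($s' = t'$) the graph is not modified, matching the topological picture; when they differ, the algorithm adds one edge between the two roots, fusing exactly those two trees into one, again matching. In both subcases the updated graph $(V_{k-1}, E_{k-1})$ is still a forest whose trees biject, with matching node sets, with the connected components of $Y_{k-1}$, which closes the induction.

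The only real care is in this last case: one must keep the inductive hypothesis (graph components versus topological components of $Y_k$) and Fact~\ref{fact:graph-is-tree} (each graph component is a rooted tree) in play simultaneously, so that ``the root returned by \textproc{Root}'' is a faithful proxy for ``the connected component of $\bsigma_s$'', and one must note that a $1$-cell's endpoints are already present, a consequence of $Y_{k-1}$ being a subcomplex. Everything else is routine bookkeeping over the branches of Algorithm~\ref{alg:tree-0-compute}.
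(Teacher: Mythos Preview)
Your proof is correct and follows essentially the same approach as the paper: the paper's own justification of Fact~\ref{fact:graph-cc} is a two-line sketch noting that adding a node corresponds to adding a connected component and adding an edge corresponds to merging two components, which is precisely the induction you have written out in full. Your version is simply a careful expansion of that sketch, including the (easy) treatment of cells of dimension $\geq 2$ that the paper leaves implicit.
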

This is because the addition of a node to the graph corresponds 
to the addition of a connected component in $\mathbb{Y}$ and
the addition of an edge corresponds to the concatenation
of two connected components.

\begin{fact}\label{fact:tree_order}
  If there is a path
  $\bsigma_s \xrightarrow{k} \bsigma_t \to \cdots \to \bsigma_{s'} \xrightarrow{k'} \bsigma_{t'}$ in $(V_{k''}, E_{k''})$, the following inequality holds:
  \begin{align*}
    k' < k < s < s'.
  \end{align*}
\end{fact}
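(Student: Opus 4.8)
The plan is to extract two monotonicity properties directly from the edge-adding rule of Algorithm~\ref{alg:tree-0-compute} and then chain them together.

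First I would record the local observation that every edge $(\bsigma_a \xrightarrow{j} \bsigma_b)$ ever added satisfies $j < a < b$. The inequality $a < b$ holds because, when the edge is created while processing the $1$-simplex $\bsigma_j$, the algorithm orients it from the root of smaller index to the root of larger index. The inequality $j < a$ follows from Fact~\ref{fact:vk}: both endpoints of the new edge are nodes of $(V_j, E_j)$, hence are $0$-simplices $\bsigma_\ell$ with $j < \ell$, and in particular the source index exceeds $j$.

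Next I would follow a directed path and argue two things. Since, by Fact~\ref{fact:graph-is-tree}, each node has at most one outgoing edge, following outgoing edges is well defined and the target of one edge is the source of the next; combined with $a<b$ this forces the node indices to strictly increase along the path, so in particular $s < s'$. For the labels, I would look at two consecutive edges $(\bsigma_a \xrightarrow{j} \bsigma_b)$ and $(\bsigma_b \xrightarrow{j'} \bsigma_c)$: the first is created at step $j$ and is incoming to $\bsigma_b$, and since at most one edge is added at any step and $\bsigma_b$ was a root of $(V_j,E_j)$, the node $\bsigma_b$ still has no outgoing edge right after step $j$; therefore its outgoing edge is created when processing some $\bsigma_m$ with $m < j$, giving $j' = m < j$. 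Hence labels strictly decrease along the path. Applying this to the first and last edges of the given path (which are genuinely different edges, since $k \neq k'$ forces the path to have at least two edges), I get $k' < k$; combining with $k < s$ (the inequality $j<a$ for the first edge) and $s < s'$ yields $k' < k < s < s'$.

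I expect the only real difficulty to be bookkeeping about the direction of ``time'': the loop runs $k = K,\ldots,1$, so edges with smaller labels are created later, the edge sets $E_k$ grow as $k$ decreases, and at each step at most one $1$-simplex-induced edge is added. Once these conventions are pinned down, each assertion above is a one-line check against the pseudocode, and nothing beyond Facts~\ref{fact:vk} and~\ref{fact:graph-is-tree} is needed.
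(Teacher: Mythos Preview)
Your proof is correct and follows essentially the same approach as the paper: both extract the local inequality $j<a<b$ for each edge from the edge-addition rule and Fact~\ref{fact:vk}, and both derive the monotonicity of labels along the path from the fact that edges are only added between current roots. Your version is more careful about the direction of time (the paper's phrase ``added after'' is in fact a slip for ``added before''), but the underlying argument is the same.
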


\begin{proof}[Proof of Fact~\ref{fact:tree_order}]
  The edge $\bsigma_s \xrightarrow{k} \bsigma_t$
  is added after $\bsigma_{s'} \xrightarrow{k'} \bsigma_{t'}$ is added
  in the algorithm since any edge is added between two root nodes, hence
  we have $k'<k$. We also show that $k < s < t$ and $k'<s'<t'$ from the
  rule of edge addition and this inequalities hold for any intermediate edge
  in the path, so we have $s < s'$. The required inequality comes from
  these inequalities.
\end{proof}

The following fact is shown since in the algorithm
each edge is added between two root nodes.
\begin{fact}\label{fact:subtree}
  If $\bsigma_s$ is not a root of a tree in $(V_k, E_k)$, the subtree
  whose root node is $\bsigma_s$ does not change in the sequence of graphs:
  $(V_{k}, E_{k}) \subset \cdots \subset (V_0, E_0)$.
\end{fact}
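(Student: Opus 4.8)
\textbf{Proof proposal for Fact~\ref{fact:subtree}.}
The plan is to prove this by a pure invariance argument built on the edge/node addition rule of Algorithm~\ref{alg:tree-0-compute} together with Fact~\ref{fact:graph-is-tree}. The basic observation I would start from is that every modification performed in one iteration of the loop is of one of three harmless types: adjoining an isolated $0$-cell to the vertex set, doing nothing, or adding a single directed edge $(\bsigma_{a} \xrightarrow{k''} \bsigma_{b})$ whose two endpoints $\bsigma_a,\bsigma_b$ are \emph{roots} of the current forest. In particular, edges are only ever added (never removed) in the chain $(V_{k}, E_{k}) \subset \cdots \subset (V_0, E_0)$, and any newly created edge is incident to two root nodes.

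First I would record a monotonicity statement for ``being a non-root'': by Fact~\ref{fact:graph-is-tree} each vertex has at most one outgoing edge at every stage, and since edges are never deleted, once a vertex acquires an outgoing edge it keeps it for the rest of the sequence; hence a vertex that is not a root in $(V_k,E_k)$ is not a root in any later $(V_{k''},E_{k''})$ with $k'' \le k$. Then I would fix the set $S$ consisting of $\bsigma_s$ together with all of its descendants in $(V_k,E_k)$ — exactly the vertex set of the subtree rooted at $\bsigma_s$. Every element of $S$ has an outgoing edge in $(V_k,E_k)$: for $\bsigma_s$ this is the hypothesis, and for a proper descendant it follows from the existence of a directed path down to $\bsigma_s$. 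By the monotonicity just noted, no element of $S$ is ever a root at any later stage, so no edge added during the remaining iterations can be incident to a vertex of $S$ (such an edge would need a root endpoint inside $S$), and isolated $0$-cells added later are a fortiori not in $S$. Since no edge among vertices of $S$ is removed and none is added, $S$ neither gains nor loses a member and its induced subgraph is literally unchanged; this is the claimed invariance of the subtree rooted at $\bsigma_s$.

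The whole argument is short bookkeeping, and the one point deserving care is the claim that the subtree cannot \emph{grow} — i.e. that no new descendant can be attached underneath $\bsigma_s$ — which is precisely where one must use that \emph{both} endpoints of a newly added edge are roots, ruling out an edge that hooks a fresh branch onto an already-non-root vertex of $S$. I expect this to be the only subtlety; the rest is a direct reading of the body of Algorithm~\ref{alg:tree-0-compute} and of Fact~\ref{fact:graph-is-tree}.
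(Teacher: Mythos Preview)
Your proposal is correct and follows essentially the same approach as the paper: the paper's own justification is the single remark that ``each edge is added between two root nodes,'' and your argument is simply a careful unpacking of that observation, together with the monotonicity of the non-root property.
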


Using these facts, we set up the 0th persistence cohomology.
We prepare some symbols:
\begin{align*}
  R_k &= \{\bsigma_s \mid \bsigma_s \text{ is a root of a tree in } (V_k, E_k)\}, \\
  \desc_k(\bsigma_s) &= \{\bsigma_t : \mbox{a descendant node of $\bsigma_s$
                      in } (V_k, E_k) \mbox{, including $\bsigma_s$ itself}\}, \\
  \odesc_k(\bsigma_s) &= \{ \bsigma_t \in \desc_0(\bsigma_s) \mid k \leq t \}, \\
  y_s^{(k)} &= \sum_{\bsigma_t \in \desc_k(\bsigma_s)} \bsigma_t^* \in C^0(Y_k),\\
  \hat{y}_s^{(k)} &= \sum_{\bsigma_t \in \odesc_k(\bsigma_s)} \bsigma_t^* \in C^0(Y_k),\\
  \bar{\varphi}_k^\vee&: C^0(Y_k) \to C^0(Y_{k+1}) \ : \mbox{the induced map of the inclusion map $Y_k \xhookleftarrow{} Y_{k+1}$}.
\end{align*}

We prove the following lemma.
\begin{lem}\label{lem:pcohom-basis}
  $\{ \hat{y}_s^{(k)} \mid \bsigma_s \in R_k\}$ is a basis of $Z^0(Y_k) = H^0(Y_k)$.
\end{lem}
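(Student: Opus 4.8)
The plan is to prove the statement by downward induction on $k$, from $k=K$ to $k=0$, using the facts established about the graph $(V_k,E_k)$. First I would note the equality $Z^0(Y_k)=H^0(Y_k)$ is immediate since $B^{-1}=0$, so the content is that the proposed vectors form a basis of $Z^0(Y_k)$. By \eqref{eq:z0_basis} (applied with $\bar{X}=Y_k$), a basis of $Z^0(Y_k)$ is $\{\sum_{\bsigma\in C}\bsigma^*\mid C\in\mathrm{cc}(Y_k)\}$, where $\mathrm{cc}(Y_k)$ is the set of $0$-cells in each connected component. By Fact~\ref{fact:graph-cc}, the connected components of $Y_k$ correspond exactly to the trees of $(V_k,E_k)$, and the $0$-cells of a component are exactly the nodes $\desc_k(\bsigma_s)$ for the root $\bsigma_s\in R_k$ of that tree. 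Hence a basis of $Z^0(Y_k)$ is $\{y_s^{(k)}\mid\bsigma_s\in R_k\}$, and it remains to show that $\hat y_s^{(k)}$ differs from $y_s^{(k)}$ only by elements already in the span — i.e. that the change of basis from $\{y_s^{(k)}\}$ to $\{\hat y_s^{(k)}\}$ is unitriangular.

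The key observation is the relationship between $\desc_k$ and $\odesc_k$. By Fact~\ref{fact:subtree}, once $\bsigma_t$ is not a root, its subtree is frozen for the rest of the algorithm; combining this with Fact~\ref{fact:vk}/Fact~\ref{fact:graph-cc}, I expect that $\desc_k(\bsigma_s)$ decomposes as the disjoint union of $\odesc_k(\bsigma_s)$ together with the sets $\desc_k(\bsigma_u)=\odesc_k(\bsigma_u)$ ranging over those nodes $\bsigma_u$ with $u<k$ that hang directly off the "live" part of the tree rooted at $\bsigma_s$ — and each such $\bsigma_u$ is itself a root of a tree in $(V_{u},E_{u})$ hence, for the relevant range, corresponds to a basis vector $\hat y_u^{(u)}$ which by Fact~\ref{fact:subtree} equals the restriction of $y_u^{(k)}$. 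Translated to cochains, this gives a relation of the shape
\begin{align*}
  y_s^{(k)} = \hat y_s^{(k)} + \sum_{u} \hat y_u^{(\cdot)}\big|_{Y_k},
\end{align*}
expressing each $y_s^{(k)}$ as $\hat y_s^{(k)}$ plus a sum of $\hat y$-vectors attached to strictly smaller indices. Ordering the roots suitably (e.g. by the index $s$, using Fact~\ref{fact:tree_order} to guarantee the ordering is consistent), this change of basis is unitriangular and hence invertible, so $\{\hat y_s^{(k)}\mid\bsigma_s\in R_k\}$ is also a basis of $Z^0(Y_k)$.

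Alternatively — and this is probably cleaner — I would run the induction directly: assume $\{\hat y_s^{(k+1)}\mid \bsigma_s\in R_{k+1}\}$ is a basis of $Z^0(Y_{k+1})$ and analyze the three cases of step $k$. If $\bsigma_k$ is a $0$-cell, a new singleton tree $\{\bsigma_k\}$ appears, adding the basis vector $\hat y_k^{(k)}=\bsigma_k^*$ to the already-valid basis, while all other $\hat y_s$ are unchanged as cochains on the larger complex $Y_{k-1}$; one checks $\bar\varphi_k^\vee$ sends this new basis to the old one plus the extra generator, matching the fact that $H^0$ gains one dimension. If $\bsigma_k$ is a $1$-cell joining two distinct trees with roots $\bsigma_{s'},\bsigma_{t'}$ (say $s'>t'$, so the new edge is $\bsigma_{t'}\xrightarrow{k}\bsigma_{s'}$), the two components merge, $R_{k-1}=R_k\setminus\{\bsigma_{t'}\}$, and $\hat y_{s'}^{(k-1)}=\hat y_{s'}^{(k)}+\hat y_{t'}^{(k)}$ while all other vectors persist; this is again a unitriangular change of basis on $Z^0(Y_{k-1})$, and one checks compatibility with $\bar\varphi_k^\vee$. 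If $\bsigma_k$ is a $1$-cell within one tree, or a cell of dimension $\geq 2$, nothing changes in $(V,E)$ and $Z^0$ is unchanged, so the basis persists verbatim. The main obstacle is the bookkeeping in the $0$-cell and merging cases: one must carefully check that $\hat y_s^{(k-1)}$ (defined via $\odesc$, which references the \emph{final} tree $\desc_0$) genuinely restricts correctly to $Y_k$ and that the unitriangular relations are consistent with the map $\bar\varphi_k^\vee$ — this is exactly where Fact~\ref{fact:subtree} (frozen subtrees) and Fact~\ref{fact:tree_order} (index monotonicity along paths) do the real work, ensuring $\odesc$ is well-behaved and the triangular system is genuinely triangular with unit diagonal.
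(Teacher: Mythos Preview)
Your high-level plan matches the paper's: first observe via Fact~\ref{fact:graph-cc} that $\{y_s^{(k)}\mid\bsigma_s\in R_k\}$ is a basis of $Z^0(Y_k)$, then pass to $\{\hat y_s^{(k)}\}$ by a unitriangular change of basis ordered by the index $s$ (using Fact~\ref{fact:tree_order}). That strategy is correct and is exactly what the paper does.

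However, both of your concrete implementations contain a genuine error rooted in a misreading of $\odesc_k$. Recall that $\odesc_k(\bsigma_s)=\{\bsigma_t\in\desc_0(\bsigma_s)\mid t\ge k\}$ is defined using the \emph{final} tree $(V_0,E_0)$, and since $(V_k,E_k)\subset(V_0,E_0)$ one has $\desc_k(\bsigma_s)\subseteq\odesc_k(\bsigma_s)$, not the reverse. So your proposed decomposition ``$\desc_k(\bsigma_s)=\odesc_k(\bsigma_s)\sqcup\cdots$'' is backward (and the extra pieces you describe, indexed by $u<k$, are not even in $V_k$). The correct identity, which the paper proves, goes the other way:
\[
  \odesc_k(\bsigma_s)=\bigsqcup_{\bsigma_t\in R_k(\bsigma_s)}\desc_k(\bsigma_t),
  \qquad\text{hence}\qquad
  \hat y_s^{(k)}=\sum_{\bsigma_t\in R_k(\bsigma_s)} y_t^{(k)},
\]
where $R_k(\bsigma_s)$ is the set of roots in $R_k$ that are descendants of $\bsigma_s$ in $(V_0,E_0)$. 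By Fact~\ref{fact:tree_order} every such $t$ satisfies $t\le s$, with equality only for $t=s$, so this \emph{is} the unitriangular relation you are after --- but in the direction $\hat y\mapsto y$, not $y\mapsto\hat y$.

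The same misreading breaks your inductive alternative. In the merge step you claim $\hat y_{s'}^{(k-1)}=\hat y_{s'}^{(k)}+\hat y_{t'}^{(k)}$, but since $\odesc$ references the fixed final tree $\desc_0$, the set $\odesc_k(\bsigma_{s'})$ already contains all eventual descendants of $\bsigma_{s'}$ with index $\ge k$, including those in the subtree of $\bsigma_{t'}$; lowering the threshold from $k$ to $k-1$ changes nothing here because $\bsigma_k$ is a $1$-cell. The relation you wrote holds for $y$, not for $\hat y$. Once you correct the direction of the inclusion, your first approach collapses into the paper's proof; the induction can be made to work, but it requires re-proving at each step that the surviving $\hat y_s^{(k)}$ are cocycles on $Y_{k-1}$, which is no easier than the paper's direct argument.
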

\begin{proof}
   From Fact~\ref{fact:graph-cc} and the theory of 0th cohomology, we have that
  $\{ y_s^{(k)} \mid \bsigma_s \in R_k\}$ are basis of $H^0(Y_k)$. Here, we prove the
  following three facts. Then the theory of linear algebra leads
  the statement of the lemma.
  \begin{enumerate}[(i)]
  \item $\#\{ y_s^{(k)} \mid \bsigma_s \in R_k\} = \# \{ \hat{y}_s^{(k)} \mid \bsigma_s \in R_k\} = \#R_k $
  \item Any $\hat{y}_s^{(k)}$ for $\bsigma_s \in R_k$ is a linear
    sum of $\{ y_s^{(k)} \mid \bsigma_s \in R_k\}$
  \item $\{ \hat{y}_s^{(k)} \mid \bsigma_s \in R_k\}$ are linearly independent.
  \end{enumerate}
  (i) is trivial. We show (ii). We can write
  $\hat{y}_s^{(k)}$ explicitly by using the two graphs $(V_{k}, E_k)$ and
  $(V_0, E_0)$ by the following way.
  Let $R_k(\bsigma_s)$ be
  \begin{align*}
    R_k(\bsigma_s) = \{\bsigma_t \in R_k \mid \bsigma_t \mbox{ is a descendant of }
    \bsigma_s \mbox{ in } (V_0, E_0), \mbox{ including $\bsigma_s$ itself} \}.
  \end{align*}
  Then we write $\hat{y}_s^{(k)} = \sum_{\bsigma_t \in R_k(\bsigma_s)} x_t^{(k)}$.
 We can show the equation from the followings.
 \begin{itemize}
 \item The family $\{ \desc_k(\bsigma_s) \mid \bsigma_s \in R_k \}$ 
   is pairwise disjoint.
 \item $\odesc_k(\bsigma_s) = \bigsqcup_{\bsigma_t \in R_k(\bsigma_t)} \desc_k(\bsigma_t)$
 \end{itemize}
 The first one comes from Fact~\ref{fact:graph-cc}. Next 
 $\odesc(\bsigma_s) \supset \bigsqcup_{\bsigma_t \in R_k(\bsigma_t)} \desc_k(\bsigma_t)$
 is shown.
 Pick any $\bsigma_u \in \desc_k(\bsigma_t)$ with $\bsigma_t \in R_k(\bsigma_s)$.
 Then there are a path $\bsigma_u \to \cdots \to \bsigma_t$ in $(V_k,E_k)$ and
 a path $\bsigma_t \to \cdots \to \bsigma_s$ in $(V_0, E_0)$. Since $(V_k, E_k)$ is
 a subgraph of $(V_0, E_0)$, there is a path from $\bsigma_u$ to $\bsigma_s$
 in $(V_0, E_0)$ through $\bsigma_t$ and this means that $\bsigma_u \in \odesc_k(\bsigma_s)$. To show the inverse inclusion relation, we pick any $\bsigma_u$ in $\odesc_k(\bsigma_s)$.
 Since $\bsigma_u \in V_k$, there is $\bsigma_t \in R_k$ such that
 $\bsigma_u \in \desc_k(\bsigma_t)$. There are a path
 $\bsigma_u \to \cdots \to \bsigma_s \in (V_0, E_0)$ and
 $\bsigma_u \to \cdots \to \bsigma_t \in (V_k, E_k)$. Since $(V_k, E_k)$ is a subgraph
 of $(V_0, E_0)$ and there is a unique path from a node to a root node in a tree,
there is always the node $\bsigma_t$ in the path
 $\bsigma_u \to \cdots \to \bsigma_s \in (V_0, E_0)$ and it means that
$\bsigma_t \in R_k(\bsigma_s)$. We prove
 $\odesc_k(\bsigma_s) = \bigsqcup_{\bsigma_t \in R_k(\bsigma_t)} \desc_k(\bsigma_t)$.

 We will show (iii). $R_k$ is ordered as $\{\bsigma_{s_1}, \ldots \bsigma_{s_m}\}$ with
 $s_1 < \ldots < s_m $.
 Assume that
 \begin{align}
   \sum_{j=1}^m\lambda_j \hat{y}_{s_j}^{(k)} = 0 \label{eq:cohomindep_assumption}
 \end{align}
where $\lambda_j \in \Zint_2$ and we show $\lambda_j = 0$ for all $j$.
Now we consider the equation
$\sum_{j=1}^m\lambda_j \hat{y}_{s_j}^{(k)}(\bsigma_{s_m}) = 0$
by applying \eqref{eq:cohomindep_assumption} to $\bsigma_{s_m}$.
Obviously, $\hat{y}_{s_m}^{(k)}(\bsigma_{s_m}) = 1$ since
$\bsigma_{s_m} \in \odesc_k(\bsigma_{s_m})$ and
$\hat{y}_{s_j}^{(k)}(\bsigma_{s_j}) = 0$ for any $1 \leq j < m$ since
$\bsigma_{s_m} \not \in \odesc_k(\bsigma_{s_j})$ from Fact~\ref{fact:tree_order}.
Therefore we have $\lambda_m = 0$. Repeatedly we can show
$\lambda_{m-1} = \cdots = \lambda_{1} = 0$ in the same way and (iii) is shown.
\end{proof}

The following lemma
is easy to show from the definition of the map.
\begin{lem}\label{lem:include-cohom}
  The map $\bar{\varphi}_k^\vee$ satisfies the following:
  \begin{align*}
    \bar{\varphi}_k^\vee(\hat{y}_s^{(k)}) = \hat{y}_s^{(k+1)}. 
  \end{align*}
\end{lem}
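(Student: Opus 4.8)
The plan is to read off the map $\bar{\varphi}_k^\vee$ from its definition as a pullback of cochains and then match the index sets $\odesc_k(\bsigma_s)$ and $\odesc_{k+1}(\bsigma_s)$ term by term. First I would recall that the inclusion $Y_{k+1} \xhookrightarrow{} Y_k$ (with $Y_k \supset Y_{k+1}$) induces on $0$-cochains the restriction map, so on the dual basis
\begin{align*}
  \bar{\varphi}_k^\vee(\bsigma_t^*) =
  \begin{cases}
    \bsigma_t^*, & \text{if } \bsigma_t \text{ is a $0$-cell of } Y_{k+1},\\
    0, & \text{otherwise.}
  \end{cases}
\end{align*}
Consequently $\bar{\varphi}_k^\vee(\hat{y}_s^{(k)})$ is obtained from $\hat{y}_s^{(k)} = \sum_{\bsigma_t \in \odesc_k(\bsigma_s)} \bsigma_t^*$ simply by deleting every summand indexed by a $0$-cell of $Y_k$ that does not lie in $Y_{k+1}$.

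Next I would invoke Condition~\ref{cond:cohom}: since $Y_k = \{\bsigma_{k+1},\ldots,\bsigma_K,\bsigma_\infty\}$ and $Y_{k+1} = \{\bsigma_{k+2},\ldots,\bsigma_K,\bsigma_\infty\}$, the two complexes differ in exactly one cell, namely $\bsigma_{k+1}$. Hence $\bar{\varphi}_k^\vee$ is the identity when $\bsigma_{k+1}$ is not a $0$-cell, and is the projection killing $\bsigma_{k+1}^*$ when it is; in either case $\bar{\varphi}_k^\vee(\hat{y}_s^{(k)}) = \sum \bsigma_t^*$ over $\bsigma_t \in \odesc_k(\bsigma_s)$ that are $0$-cells of $Y_{k+1}$. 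It then remains to observe that this surviving index set is precisely $\odesc_{k+1}(\bsigma_s)$: the defining condition of $\odesc_k$ already restricts $\desc_0(\bsigma_s)$ to the $0$-cells of $Y_k$, and intersecting further with the $0$-cells of $Y_{k+1}$ is exactly the constraint that defines $\odesc_{k+1}(\bsigma_s)$. This yields $\bar{\varphi}_k^\vee(\hat{y}_s^{(k)}) = \sum_{\bsigma_t \in \odesc_{k+1}(\bsigma_s)} \bsigma_t^* = \hat{y}_s^{(k+1)}$.

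The only place that needs a moment of care, and the closest thing to an obstacle, is the bookkeeping around the single cell $\bsigma_{k+1}$: one must be sure that $\odesc_k(\bsigma_s)$ and $\odesc_{k+1}(\bsigma_s)$ can differ only by $\bsigma_{k+1}$, which is immediate once one notes that $\desc_0(\bsigma_s)$ is a fixed set (the descendants in the final graph $(V_0,E_0)$) and the two $\odesc$'s are just its truncations to $Y_k$ and $Y_{k+1}$. No property of the merge-tree algorithm is actually used here beyond Fact~\ref{fact:vk}; the statement is purely about restriction of $0$-cochains along a one-cell inclusion.
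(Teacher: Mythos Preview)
Your argument is correct and is exactly the unpacking of what the paper means by ``easy to show from the definition of the map'': you spell out that $\bar{\varphi}_k^\vee$ acts as restriction on the dual basis and then match the index sets $\odesc_k(\bsigma_s)$ and $\odesc_{k+1}(\bsigma_s)$, which differ at most in the single cell $\bsigma_{k+1}$. There is no substantive difference between your route and the paper's; you have simply written out the details.
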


We also show the following lemma.
\begin{lem}\label{lem:pcohom-birthdeath}
  If $(\bsigma_s \xrightarrow{k} \bsigma_t) \in E_0$, the followings hold:
  \begin{enumerate}[(i)]
  \item $\hat{y}_s^{(u)} \not \in Z^0(Y_{k})$ for $u \leq k$
  \item $\hat{y}_s^{(u)} \in Z^0(Y_{k+1})$ for $k+ 1 \leq s$
  \item $\hat{y}_s^{(u)} \not = 0$ for $u \leq s$
  \item $\hat{y}_s^{(u)} = 0$ for $u \geq s+1$
  \end{enumerate}
\end{lem}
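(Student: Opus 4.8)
The plan is to derive all four statements from the single step of Algorithm~\ref{alg:tree-0-compute} at which the edge $(\bsigma_s \xrightarrow{k} \bsigma_t)$ is created, combined with the structural Facts~\ref{fact:vk}--\ref{fact:subtree} and the basis description of Lemma~\ref{lem:pcohom-basis}. First I would record a few elementary facts about the node $\bsigma_s$. The edge is added while processing the $1$-cell $\bsigma_k$, so (in the notation of this lemma) the two endpoints of $\bsigma_k$ have $\textproc{Root}(\,\cdot\,,V_k,E_k)$ equal to $\bsigma_s$ and $\bsigma_t$ respectively; hence $\bsigma_s$ is a root of $(V_k,E_k)$ and acquires an outgoing edge immediately afterwards. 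By Fact~\ref{fact:subtree} its subtree is then frozen, so $\desc_0(\bsigma_s)=\desc_k(\bsigma_s)$, and by Fact~\ref{fact:vk} every node of $\desc_0(\bsigma_s)$ has index greater than $k$. By Fact~\ref{fact:tree_order} indices increase along every root-ward path, so $\bsigma_s$ is the node of \emph{largest} index in $\desc_0(\bsigma_s)$. Finally $\bsigma_s\in R_u$ exactly for $k\le u\le s-1$ (it is born as an isolated root when $\bsigma_s$ is processed at step $s$ and becomes a child at step $k$).

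Granting these, (iv) and (iii) follow directly from the definition of $\hat y_s^{(u)}$: the index set $\odesc_u(\bsigma_s)=\{\bsigma_t\in\desc_0(\bsigma_s)\mid u\le t\}$ loses its last member, namely $\bsigma_s$, precisely when $u$ exceeds $s$, so $\hat y_s^{(u)}=0$ for $u\ge s+1$ and $\hat y_s^{(u)}\ne 0$ for $u\le s$ since the sum then still contains $\bsigma_s^*$. For (ii) I would invoke Lemma~\ref{lem:pcohom-basis}: for every $u$ in the living range $k\le u\le s-1$ one has $\bsigma_s\in R_u$, so $\hat y_s^{(u)}$ is one of the declared basis vectors of $Z^0(Y_u)=H^0(Y_u)$, and by Lemma~\ref{lem:include-cohom} these are compatible along the restriction maps; in particular $\hat y_s^{(k)}\in Z^0(Y_k)$.

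The only real content is (i), which I read as the assertion that the class is born exactly at $k$: $\hat y_s^{(k)}$, while a cocycle on $Y_k$, is \emph{not} a cocycle on the strictly larger complex $Y_{k-1}=Y_k\cup\{\bsigma_k\}$. For this I would return to the $1$-cell $\bsigma_k$ itself. Write its endpoints as $\bsigma_a,\bsigma_b$ with $\textproc{Root}(\bsigma_a,V_k,E_k)=\bsigma_s$ and $\textproc{Root}(\bsigma_b,V_k,E_k)=\bsigma_t$; the algorithm adds an edge only when these two roots differ, so $\bsigma_s\ne\bsigma_t$, and by Fact~\ref{fact:graph-is-tree} $\bsigma_a$ and $\bsigma_b$ lie in different trees of $(V_k,E_k)$, whence $\bsigma_a\in\desc_k(\bsigma_s)=\desc_0(\bsigma_s)$ while $\bsigma_b\notin\desc_0(\bsigma_s)$. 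Therefore $\hat y_s^{(k)}(\bsigma_a)=1$ and $\hat y_s^{(k)}(\bsigma_b)=0$, so $(\delta\hat y_s^{(k)})(\bsigma_k)=\hat y_s^{(k)}(\partial\bsigma_k)=\hat y_s^{(k)}(\bsigma_a+\bsigma_b)=1\ne 0$, which is exactly $\hat y_s^{(k)}\notin Z^0(Y_{k-1})$. (Here one uses that $\bsigma_k$ is a $1$-cell, so $Y_{k-1}$ and $Y_k$ have the same $0$-cells and $\bsigma_a,\bsigma_b$ indeed lie in $Y_{k-1}$.) Together, (i)--(iv) exhibit $\hat y_s$ as a persistence cocycle with birth $k$ and death $s$, which is precisely what Theorem~\ref{thm:tree0} consumes.

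The part I expect to be fiddly is not conceptual but the index bookkeeping: keeping straight, claim by claim, which complex $Y_j$ and which superscript $(u)$ is in play; observing that all the $\hat y_s^{(u)}$ with small $u$ coincide as cochains (because every descendant of $\bsigma_s$ has index exceeding $k$), so that the single coboundary computation above suffices uniformly; and being careful with the boundary values of $u$ relative to the convention $Y_j=\{\bsigma_{j+1},\ldots\}$.
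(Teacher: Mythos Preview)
Your arguments for (ii), (iii), and (iv) are essentially identical to the paper's: (iii) and (iv) follow from $\bsigma_s$ having the largest index in $\desc_0(\bsigma_s)$ (Fact~\ref{fact:tree_order}), and (ii) is read off directly from Lemma~\ref{lem:pcohom-basis} on the range where $\bsigma_s\in R_u$.

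For (i) your route diverges from the paper's. The paper argues abstractly: it invokes the characterization of $Z^0(Y_u)$ by connected components (cf.\ \eqref{eq:z0_basis}), observes via Fact~\ref{fact:subtree} that $\odesc_u(\bsigma_s)=\desc_u(\bsigma_s)$ for $u\le k$, and then uses Fact~\ref{fact:graph-cc} to see that this set is a \emph{proper} subset of the component $\desc_u(\bsigma_v)$ rooted at the actual root $\bsigma_v$ of $\bsigma_s$ in $(V_u,E_u)$, hence not a union of components, hence not a cocycle. You instead produce an explicit witness: the very $1$-cell $\bsigma_k$ that triggered the edge has one endpoint in $\desc_0(\bsigma_s)$ and one outside, so $(\delta\hat y_s^{(u)})(\bsigma_k)=1$. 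This is more elementary---it needs only the definition of $\delta$ and the disjointness of trees in $(V_k,E_k)$---and it pinpoints \emph{which} $1$-cell obstructs extendability, whereas the paper's argument explains \emph{why} in terms of connectivity. Both are correct; your version also makes transparent the uniformity over all $u\le k-1$, since $\bsigma_k\in Y_u$ there and the cochain $\hat y_s^{(u)}$ is literally constant in $u$ on that range, a point the paper leaves implicit. Your reading of the intended index conventions (that (i) really concerns $Z^0(Y_u)$ for $u<k$) matches what the paper's own proof actually establishes.
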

\begin{proof}
  Since $\hat{y}_s^{(u)} $ is an element of basis of $Z^0(Y_u)$ due to Lemma~\ref{lem:pcohom-basis}
  for $k+1 \leq u \leq s$ , we have (ii).
  From Fact~\ref{fact:tree_order}, we have
  $\desc_0(\bsigma_s) \subset \{\bsigma_1, \ldots, \bsigma_s\}$ and so
  $\odesc_{s+1}(\bsigma_s) = \emptyset$, therefore (iv) is true.
  Since $\bsigma_s \in \odesc_u(\bsigma_s)$ for any $u \leq s$ from the definition
  of $\odesc_u(\bsigma)$, we have (iii).
  From the theory of 0th cohomology,
  $\hat{y}_s^{(u)} \in Z^0(Y_{k})$ if and only if
  $\odesc_{u}(\bsigma_s)$ is a finite union of connected components.
  However, 
  from Fact~\ref{fact:subtree},
  \begin{align*}
    \odesc_{u}(\bsigma_s) = \desc_{u}(\bsigma_s) \text{ for } u \leq k
  \end{align*}
  from Fact~\ref{fact:graph-cc} this set is a proper subset of $\desc_{u}(\bsigma_v)$
  where $\bsigma_v$ is the root of the tree which has $\bsigma_s$ as a node. Therefore
  we have (i).
\end{proof}

The following theorem is required for the treatment of reduced persistent cohomology.
\begin{lem}\label{lem:cohom-reduced}\ 
  \begin{enumerate}[(i)]
  \item $(V_0, E_0)$ is a single tree.
  \item The root of the single tree is $\bsigma_\infty$
  \item $\bsigma_\infty$ is a root of a tree in $(V_k, E_k)$ for any $k$.
  \item $\hat{y}_{\infty}^{(k)} = \sum_{\bsigma_u:\textrm{0-simplex}, u> k} \bsigma_u^*$
  \item $\tilde{H}^0(Y_k) = H^0(Y_{k}) /\left<\hat{y}_{\infty}^{(k)}\right>$
  \item $\{[y_s^{(k)}]_{\left<\hat{y}_\infty^{(k)}\right>}\mid s\not = \infty,
    \sigma_s \in R_k\}$ and
    $\{[\hat{y}_s^{(k)}]_{\left<\hat{y}_\infty^{(k)}\right>}\mid s\not = \infty,
    \sigma_s \in R_k\}$
    are two bases of $\tilde{H}^0(Y_k)$
  \end{enumerate}
\end{lem}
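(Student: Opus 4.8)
The plan is to establish the six items in the stated order, since each relies on its predecessors, and to exploit the combinatorial facts already proven about the graphs $(V_k, E_k)$, namely Facts~\ref{fact:vk}, \ref{fact:graph-is-tree}, \ref{fact:graph-cc} and \ref{fact:subtree}, together with Lemma~\ref{lem:pcohom-basis}. For (i), I would note that $(V_0, E_0)$ is a forest by Fact~\ref{fact:graph-is-tree}, that its trees correspond bijectively to the connected components of $Y_0$ by Fact~\ref{fact:graph-cc}, and that $Y_0 = Y$ is connected by Condition~\ref{cond:cohom}; hence it is a single tree. For (ii) and (iii), the key observation is that $\bsigma_\infty$ carries, by convention, the largest index among the $0$-cells --- consistent with it being initialized into $V_K$ and never removed --- so whenever the algorithm adds an edge incident to a root that equals $\bsigma_\infty$, the orientation rule ``add $(\bsigma_{t'} \xrightarrow{k} \bsigma_{s'})$ when $s' > t'$'' forces the new edge to point \emph{into} $\bsigma_\infty$. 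Thus $\bsigma_\infty$ never gains an outgoing edge, so it is a root of $(V_k, E_k)$ for every $k$, which is (iii); specializing to $k=0$ and invoking (i) shows it is the unique root of the single tree, which is (ii).

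For (iv), items (i) and (ii) give that every node of $(V_0, E_0)$ is a descendant of $\bsigma_\infty$, so $\desc_0(\bsigma_\infty)$ consists of all $0$-cells of $Y$ (Fact~\ref{fact:vk} with $k = 0$). Unwinding the definitions of $\odesc_k$ and $\hat{y}_s^{(k)}$ then yields $\hat{y}_\infty^{(k)} = \sum_{\bsigma_t \in \odesc_k(\bsigma_\infty)} \bsigma_t^* = \sum_{u > k} \bsigma_u^*$, the sum over all $0$-cells still present in $Y_k$. Item (v) is then immediate: $H^0(Y_k) = Z^0(Y_k)$ because there are no $(-1)$-cells, and by the definition of reduced cohomology $\tilde{H}^0(Y_k)$ is the quotient of $H^0(Y_k)$ by the class of the ``all-ones'' cochain on the $0$-cells of $Y_k$, which by (iv) is precisely $\hat{y}_\infty^{(k)}$.

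For (vi), Lemma~\ref{lem:pcohom-basis} gives that $\{\hat{y}_s^{(k)} \mid \bsigma_s \in R_k\}$ is a basis of $H^0(Y_k)$, and by (iii) the distinguished vector $\hat{y}_\infty^{(k)}$ is one of its members; hence in the quotient $\tilde{H}^0(Y_k) = H^0(Y_k)/\langle \hat{y}_\infty^{(k)}\rangle$ the classes $\{[\hat{y}_s^{(k)}] \mid \bsigma_s \in R_k,\ s \neq \infty\}$ form a basis. For the family built from the $y_s^{(k)}$, I would start from the standard basis $\{y_s^{(k)} \mid \bsigma_s \in R_k\}$ of $H^0(Y_k)$ coming from Fact~\ref{fact:graph-cc}, observe that the sets $\desc_k(\bsigma_s)$ for $\bsigma_s \in R_k$ partition the $0$-cells of $Y_k$ so that $\hat{y}_\infty^{(k)} = \sum_{\bsigma_s \in R_k} y_s^{(k)}$, and conclude that in $\tilde{H}^0(Y_k)$ one has $[y_\infty^{(k)}] = \sum_{\bsigma_s \in R_k,\ s \neq \infty} [y_s^{(k)}]$ over $\Zint_2$; hence $\{[y_s^{(k)}] \mid \bsigma_s \in R_k,\ s \neq \infty\}$ spans, and since it has cardinality $\#R_k - 1 = \dim \tilde{H}^0(Y_k)$ it is a basis.

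The only genuinely delicate point, and the one I would treat most carefully, is the index bookkeeping around $\bsigma_\infty$ underlying (ii)--(iii): one must fix and justify the convention that $\bsigma_\infty$ acts as the maximal index and then verify that the comparison in the edge-orientation step is always resolved in its favour. Everything else is routine linear algebra over $\Zint_2$ and repeated application of the facts already at hand.
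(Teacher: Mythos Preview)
Your proof is correct and follows essentially the same approach as the paper, which dispatches each item in one line by citing the appropriate fact or definition; you are simply more explicit throughout, particularly in (vi) where the paper says only ``by (i--v).'' The one cosmetic difference is that for (ii)--(iii) the paper invokes Fact~\ref{fact:tree_order} (whose proof contains the inequality $s<t$ for any edge $\bsigma_s \xrightarrow{k} \bsigma_t$), whereas you re-derive this directly from the algorithm's edge-orientation rule --- but the content is identical.
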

\begin{proof}
  (i) comes from the connectivity of $Y$ in Condition~\ref{cond:cohom} and
  Fact~\ref{fact:graph-cc}. (ii) and (iii) comes from Fact~\ref{fact:tree_order}.
  (iv) comes from the definition of $\hat{y}_{\infty}^{(k)}$ and (ii).
  (v) comes from (iv) and from the definition of reduced cohomology
  \eqref{eq:cohomology0}.
  Finally, we conclude (vi) by (i-v).
\end{proof}

Lemma~\ref{lem:pcohom-basis}, \ref{lem:include-cohom}, \ref{lem:pcohom-birthdeath},
and \ref{lem:cohom-reduced}
lead Theorem \ref{thm:tree0}. 

\subsection{Merge-tree algorithm for $(n-1)$-th persistent homology}

\begin{proof}[Proof of Theorem~\ref{thm:vochd-alg}(i)]
  We prove Theorem~\ref{thm:vochd-alg}. 
  Theorem~\ref{thm:vochd-alg}(i) is the direct result of
  Theorem~\ref{thm:tree0} and
  $\tilde{H}_{\ell-1}(\X) \simeq \tilde{H}^{n-\ell}(\bar{\X})$
  by applying Algorithm~\ref{alg:tree-0-compute} to
  $\X^+: X_0 \subset X_1 \subset \cdots\subset  X_K $ in $S^n$
  and its dual decomposition 
  $\bar{\X}^+:\bar{X}_0 \supset \cdots \supset \bar{X}_K = \{ \bsigma_\infty\}$.
  To apply Theorem~\ref{thm:tree0}, we check
  $\bar{X}_0 = \{\bsigma_1, \ldots, \bsigma_K, \bsigma_\infty\}$ is connected
  and it is true since the dual decomposition is also a decomposition of $S^n$.
\end{proof}

\begin{proof}[Proofs of Theorem~\ref{thm:vochd-unique} and Theorem~\ref{thm:vochd-alg}(ii)]
  We show that $x_b^{(d)} = \sum_{\bsigma_t \in \desc_b(\bsigma_d)} \sigma_t$
  is a persistent volume for a birth-death pair $(b, d)$. \eqref{eq:vc-1} is shown by
  $\desc_b(\bsigma_d) \subset \{\bsigma_{b+1}, \ldots, \bsigma_d\}$ from Fact~\ref{fact:tree_order} and $\bsigma_d \in \desc_b(\bsigma_d)$ from the definition of $\desc_b(\bsigma_d)$.
  \eqref{eq:vc-2} and \eqref{eq:vc-3} is shown from
  Lemma~\ref{lem:pcohom-birthdeath}(i) and (ii), and \eqref{eq:ccboundary}.

  To prove the optimality of $x_b^{(d)}$, we show the following claim.
  \begin{claim}
    If $x$ is a persistent volume of $(b, d)$, $x_b^{(d)} \subset x$ holds.
  \end{claim}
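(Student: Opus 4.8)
The plan is to transport the statement to the dual complex, where it becomes the assertion that a certain $0$-cochain is forced to be constant on a connected component. Write $C := \desc_b(\bsigma_d)$. Because the edge $(\bsigma_d \xrightarrow{b} \bsigma_s)$ is created only at step $b$ of Algorithm~\ref{alg:tree-0-compute}, $\bsigma_d$ is still a root of a tree in $(V_b, E_b)$, so by Fact~\ref{fact:graph-cc} the set $C$ is precisely the set of $0$-cells of one connected component of $\bar{X}_b$. Recall also from the verification of \eqref{eq:vc-1} just above that $C = \desc_b(\bsigma_d) \subseteq \{\bsigma_{b+1}, \ldots, \bsigma_d\}$, i.e. every element of $C$ has index in $(b, d]$.

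Next I would dualize the persistent volume. Its support consists of $n$-simplices of index in $(b,d]$, so $x$ lives in $C_n(K, X_b)$, and by the Alexander-duality dictionary of Section~\ref{sec:alex} it corresponds to $\xi := \bar{\theta}(x) \in C^0(\bar{X}_b)$, with $\xi(\bsigma_d) = 1$ and support among $0$-cells of index in $(b,d]$. Using $\delta \circ \bar{\theta} = \bar{\theta} \circ \partial$, condition \eqref{eq:vc-2} becomes the statement that $\delta\xi$ vanishes on every $1$-cell of $\bar{X}_b$ of index strictly between $b$ and $d$. The crucial observation is that this already covers every $1$-cell $e$ of $\bar{X}_b$ whose two endpoints both lie in $C$: such an $e$ belongs to $\bar{X}_b$, so its index exceeds $b$, and in the dual decomposition a face of a cell has strictly larger index than the cell itself (this is the transpose-boundary property of Proposition~\ref{prop:dual}), so the index of $e$ is strictly smaller than the indices of its two endpoints, which are $\le d$. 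Hence $\delta\xi$ vanishes on every $1$-cell of $\bar{X}_b$ lying inside the component whose $0$-cells form $C$.

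The conclusion is then immediate. Since that component is connected, any two of its $0$-cells are joined by a path of $1$-cells contained in it; over $\Zint_2$, $\delta\xi(e) = 0$ says $\xi$ agrees on the two endpoints of $e$, so chaining along such a path shows $\xi$ is constant on $C$. As $\bsigma_d \in C$ and $\xi(\bsigma_d) = 1$, we get $\xi \equiv 1$ on $C$; equivalently, the coefficient of $\sigma_t$ in $x$ is $1$ for every $\bsigma_t \in C$. Since $x_b^{(d)} = \sum_{\bsigma_t \in \desc_b(\bsigma_d)} \sigma_t$ and an $n$-chain over $\Zint_2$ is identified with its support, this is exactly $x_b^{(d)} \subset x$.

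I expect the main difficulty to be bookkeeping rather than ideas: one must check carefully that $\bar{\theta}$ is the correct dictionary between the relative chain group $C_n(K, X_b)$ and $C^0(\bar{X}_b)$, and that \eqref{eq:vc-2} — which constrains only $(n-1)$-simplices of index strictly between $b$ and $d$ — is strong enough to control every $1$-cell internal to $C$. That last point is exactly where the index bound $C \subseteq \{\bsigma_{b+1}, \ldots, \bsigma_d\}$ together with the "faces have larger index" property of the dual decomposition does the real work; note in particular that condition \eqref{eq:vc-3} is not used here, which is consistent with the earlier remark that this constraint can often be dropped in the algorithm.
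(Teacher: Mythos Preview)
Your argument is correct and takes a genuinely different route from the paper's. The paper works at the homology level: it invokes Theorem~\ref{thm:good_voc} to regard $[\partial x]_b$ as a class in $H_{n-1}(X_b)$, pushes it through the Alexander isomorphism $\Theta$ of \eqref{eq:ccboundary} to express it in the connected-component basis $\{[y_s^{(b)}]\}$ of $\tilde{H}^0(\bar{X}_b)$ (Lemma~\ref{lem:cohom-reduced}(vi)), and then uses $Z_n(X_b)=0$ to lift the resulting homology identity to an exact chain-level equation $x=\sum_{\sigma_s\in R} x_s^{(b)}$; the inclusion $x_b^{(d)}\subset x$ follows by reading off the $\sigma_d$-coefficient and using that the $x_s^{(b)}$ have pairwise disjoint supports. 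You instead stay entirely at the cochain level: dualize $x$ to $\xi\in C^0(\bar{X}_b)$ via $\bar\theta$, use the face/index inequality for the dual decomposition (Proposition~\ref{prop:dual}) to see that \eqref{eq:vc-2} forces $\delta\xi$ to vanish on every $1$-cell whose endpoints both lie in $C$, and conclude by connectivity of $C$ that $\xi\equiv 1$ there. Your approach is more elementary --- it needs only the chain map $\bar\theta$ and Fact~\ref{fact:graph-cc}, not the homology isomorphism or the basis lemmas --- while the paper's approach yields the slightly stronger by-product $x=\sum_{\sigma_s\in R} x_s^{(b)}$, i.e.\ every persistent volume is a union of full connected components, which is what ultimately drives Theorem~\ref{thm:vochd-tree}. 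Your remark that \eqref{eq:vc-3} is not needed for this claim is also accurate.
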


  The claim immediately leads Theorem~\ref{thm:vochd-unique}
  and Theorem~\ref{thm:vochd-alg}(ii).
  From Theorem~\ref{thm:good_voc}, $[\partial x]_b$ is well defined and
  nonzero. By the isomorphism for Alexander duality and
  Lemma~\ref{lem:cohom-reduced}(vi),
  there is $R \subset R_b$
  such that 
  \begin{align*}
    [\partial x]_b &= \Theta(\sum_{\sigma_s \in R}
                     [y_s^{(b)}]_{\left<\hat{y}_\infty^{(b)}\right>}), \\
    \bsigma_\infty & \not \in R.
  \end{align*}
  From the relation \eqref{eq:ccboundary} and the definition of $x_s^{(b)}$,
  \begin{align*}
    [\partial x_s^{(b)}] = \Theta(
    [y_s^{(b)}]_{\left<\hat{y}_\infty^{(b)}\right>})
  \end{align*}
  for any $s$ with $\sigma_s \in R_k$. Therefore
  \begin{align*}
    [\partial x]_b = \sum_{\sigma_s \in R} [\partial x_s^{(b)}]_b,
  \end{align*}
  and hence
  \begin{align*}
    \partial x + \sum_{\sigma_s \in R} \partial x_s^{(b)} \in B_{n-1}(X_{b}),
  \end{align*}
  so there exists $w \in C_{n}(X_b)$ such that
  \begin{align*}
    \partial(x + \sum_{\sigma_s \in R} x_s^{(b)} + w) = 0
  \end{align*}
  holds. Since $X_b$ is a simplicial complex embedded in $\R^n$, $Z_n(X_b) = 0$ and
  \begin{align*}
    x + \sum_{\sigma_s \in R} x_s^{(b)} + w = 0.
  \end{align*}
  Since $x, x_s^{(b)} \in \left<\sigma_k: n\text{-simplex} \mid b < k \leq d \right>$ and
  $w \in C_n(X_b) = \left<\sigma_k: n\text{-simplex} \mid k < b  \right>$,
  we have $w = 0$ and
  \begin{align*}
    x = \sum_{\sigma_s \in R} x_s^{(b)}.
  \end{align*}
  From \eqref{eq:vc-1}, $x$ has always $\sigma_d$ term and so
  $\sigma_d \in R$ and finish the proof of the claim.
\end{proof}

Theorem~\ref{thm:vochd-tree} and Theorem~\ref{thm:vochd-alg}(iii) are immediately
come from the definition of $x_d^{(b)}$ and properties of the tree structure shown in
Section~\ref{subsec:treemerge-0-pcohom}.

\section*{Acknowledgements}

Dr. Nakamura provided 
the data of the atomic configuration of amorphous silica used in the example.
Dr. Escolar provided the computation software for optimal cycles
on persistent homology. I thank them.

This work is partially supported by 
JSPS KAKENHI Grant Number JP 16K17638, 
JST CREST Mathematics15656429, and
Structural Materials for Innovation, Strategic Innovation Promotion Program
D72.

\bibliographystyle{unsrt}
\bibliography{voc}

\end{document}